\newtheorem{thm}{Theorem}[section]
\newtheorem{lem}[thm]{Lemma}
\newtheorem{prop}[thm]{Proposition}
\newtheorem{defn}[thm]{Definition}
\theoremstyle{definition}
\newtheorem{constr}[thm]{Construction}
\newtheorem{rem}[thm]{Remark}
\newtheorem*{defn*}{Definition}
\newtheorem*{thm*}{Theorem}
\newtheorem*{prop*}{Proposition}
\newtheorem*{cor*}{Corollary}
\newtheorem*{question*}{Question}
\newcommand{\N}{\mathbb{N}}
\newcommand{\Z}{\mathbb{Z}}
\newcommand{\R}{\mathbb{R}}
\newcommand{\T}{\mathbb{T}}
\newcommand{\Rm}{\operatorname{Rm}}
\newcommand{\Ric}{\operatorname{Ric}}
\newcommand{\Lip}{\operatorname{Lip}}
\newcommand{\tr}{\operatorname{tr}}
\newcommand{\im}{\operatorname{Im}}
\newcommand{\Ker}{\operatorname{Ker}}
\newcommand{\sff}{{\rm II}}
\newcommand{\cA}{\mathcal A}
\newcommand{\cC}{\mathcal C}
\newcommand{\cG}{\mathcal G}
\newcommand{\cH}{\mathcal H}
\newcommand{\cL}{\mathcal L}
\newcommand{\cQ}{\mathcal Q}
\newcommand{\cR}{\mathcal R}
\newcommand{\cV}{\mathcal V}
\title{A Generalization of the Geroch Conjecture with Arbitrary Ends}
\address{Department of Mathematics, Stanford University, 450 Jane Stanford Way, Bldg
380, Stanford, CA 94305}
\author{Shuli Chen}
\email{shulic@stanford.edu}
\begin{document}

\begin{abstract} Using $\mu$-bubbles, we prove that for $3 \le n \le 7$, the connected sum of a Schoen--Yau--Schick $n$-manifold with an arbitrary manifold does not admit a complete metric of positive
scalar curvature.

When either $3 \le n \le 5$, $1 \le m \le n-1$ or $6 \le n \le 7$, $m \in \{1, n-2, n-1\}$, we also show the connected sum $(M^{n-m}\times \T^m) \# X^n$ where $X$ is an arbitrary manifold
does not admit a metric of positive $m$-intermediate curvature. Here $m$-intermediate curvature is a new notion of curvature introduced by Brendle, Hirsch and Johne interpolating between Ricci and scalar curvature.
\end{abstract}

\maketitle

\section{Introduction}
The well-known Geroch conjecture asks whether the torus $\T^n$ admits a metric of positive scalar curvature. A negative answer to this conjecture was given by Schoen and Yau for $3 \le n \le 7$ using
minimal hypersurfaces via the inductive descent method \cite{SY:descent}, and by Gromov and Lawson for all dimensions using spinors \cite{gromov1983positive}.
This result has had several important consequences,
including Schoen-Yau’s proof of the positive mass theorem in
general relativity \cite{schoen1979proof, schoen1989variational, SY:sing-PMT} and Schoen’s resolution of the Yamabe problem \cite{schoen1984conformal}.

The Geroch conjecture has been generalized in various ways. For instance, Chodosh and Li \cite{soapbubble}
proved the Geroch conjecture with arbitrary ends for $3 \le n \le 7$ via the $\mu$-bubble technique; namely, they proved for any $n$-manifold $X$, the connected sum $\T^n\# X$ does not admit a complete metric of positive scalar curvature. The case $n= 3$ was also obtained independently by Lesourd, Unger, and Yau \cite{lesourd2020positive}.
Recently, in the spin setting, Wang and Zhang \cite{wang2022generalized} showed that for arbitrary $n$ and any spin $n$-manifold $X$, the connected sum $\T^n\# X$ admits no complete metric of
positive scalar curvature. 
Using a similar argument, Chodosh and Li \cite{soapbubble} further extended their result to manifolds of
the form $(M^{n-1} \times S^1) \#X$, where $3\le n \le 7$, $M$ is a Schoen--Yau--Schick manifold and $X$ is arbitrary. Here we recall the definition of a Schoen--Yau--Schick manifold:
\begin{defn}[Schoen--Yau--Schick manifold, \cite{SY:descent,schick1998counterexample,SY:sing-PMT,gromov2018metric}]
An orientable closed manifold $M^n$ is called a Schoen--Yau--Schick manifold (abbreviated as SYS manifold), if there are nonzero cohomology
classes $\beta_1, \beta_2, \dots, \beta_{n-2}$ in $H^1(M;\Z)$ such that the homology class
$[M] \frown (\beta_1 \smile \beta_2 \smile \dots \smile \beta_{n-2}) \in H_2(M;\Z)$ is non-spherical, that is, it does not lie in the image of the Hurewicz homomorphism $\pi_2(M) \to H_2(M;\Z)$.
\end{defn}
In particular, the torus is an SYS manifold. SYS manifolds were first considered by Schoen and Yau in \cite{SY:descent}, where they proved that SYS manifolds of dimension at most 7 do not admit metrics of positive scalar curvature via the inductive descent argument. Later, Schick \cite{schick1998counterexample} constructed an SYS manifold as a counterexample to the unstable Gromov--Lawson--Rosenberg conjecture. 

\begin{thm}\cite{soapbubble}\label{thm: SYS times S1}
Let $3 \le n \le 7$, and let $M^{n-1}$ be a Schoen--Yau--Schick manifold. For
any $n$-manifold $X$, the connected sum $(M^{n-1} \times S^1)\#X$ does not admit a
complete metric of positive scalar curvature.
\end{thm}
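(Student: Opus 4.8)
I would argue by contradiction, combining the $\mu$-bubble technique of Chodosh--Li with one step of the Schoen--Yau descent. Suppose $W := (M^{n-1}\times S^1)\#X$ carried a complete metric $g$ with $R_g>0$. Collapsing the $X$-summand gives a degree-one map $f\colon W\to M^{n-1}\times S^1$ which is the identity outside the connected-sum region; choosing $\theta\in S^1$ away from that region, the hypersurface $\Sigma_0:=f^{-1}(M^{n-1}\times\{\theta\})$ is a compact, two-sided copy of $M^{n-1}$ inside $W$, non-separating and Poincar\'e dual to a nonzero class $\omega\in H^1(W;\Z)$ pulled back from $H^1(S^1;\Z)$. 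Cut $W$ open along $\Sigma_0$ to obtain a connected manifold $V$ with $\partial V=\partial_-V\sqcup\partial_+V$, each $\cong M^{n-1}$; topologically $V$ is $M^{n-1}\times[-1,1]$ with a punctured copy of $X$ glued along an interior sphere (at some level $t_X\in(-1,1)$), so $V$ is non-compact exactly to the extent that $X$ is, and there is a degree-one map $q\colon V\to M^{n-1}$ restricting to the identity on each $\partial_\pm V$. (If $X$, hence $W$, is non-orientable we first pass to the orientation double cover, which changes nothing essential.)

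Next I would run a $\mu$-bubble inside $V$. Fix a reference region $\Omega_0:=M^{n-1}\times[-1,t_0)$ with $t_0<t_X$, so $\Omega_0$ contains a collar of $\partial_-V$, is disjoint from $\partial_+V$ and from the glued copy of $X$ (hence from all ends of $X$), and has compact reduced boundary $\partial^*\Omega_0=M^{n-1}\times\{t_0\}$. Pick a smooth function $h$ on $\mathrm{int}\,V$ with $h\to+\infty$ along $\partial_-V$ and $h\to-\infty$ along $\partial_+V$ \emph{and along every end of the glued-in $X$}, and minimise the $\mu$-bubble functional
\[
\mathcal A(\Omega)\;=\;\mathcal H^{n-1}\!\big(\partial^*\Omega\cap\mathrm{int}\,V\big)\;-\;\int_V(\chi_\Omega-\chi_{\Omega_0})\,h
\]
over Caccioppoli sets $\Omega$ with $\Omega\,\triangle\,\Omega_0\Subset\mathrm{int}\,V$. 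The sign of the blow-up of $h$ along $\partial_+V$ and along the ends of $X$ makes $\mathcal A$ bounded below and prevents minimising sequences from drifting into those ends, so a minimiser exists and is supported in a compact part of $\mathrm{int}\,V$; since $3\le n\le7$, the reduced boundary $\partial^*\Omega$ of the minimiser is a smooth, closed, embedded, two-sided hypersurface $\Sigma^{n-1}$, and by construction $[\Sigma]=[\partial^*\Omega_0]=[M^{n-1}\times\{t_0\}]$ in $H_{n-1}(V;\Z)$. I expect \emph{this step to be the main obstacle}: the prescribed mean curvature $h$ must blow up along $\partial_+V$ and along the geometrically completely uncontrolled (and possibly infinitely many) ends of $X$, while still satisfying the Riccati-type bound $|\nabla h|\le\tfrac14 h^2+c$ needed to run the stability estimate below, even though $\partial_\pm V$ may be arbitrarily far apart in $(V,g)$ and $R_g$ need not be bounded below by a positive constant; overcoming this requires a careful, geometry-adapted choice of $h$ (e.g.\ after a preliminary conformal change of $g$ and along a proper exhaustion of $V$), and this is precisely where the ``arbitrary ends'' difficulty is concentrated. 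Everything else is the classical machinery.

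Granting the bubble, its induced metric $g_\Sigma$ has positive scalar curvature in the weak sense of a first eigenvalue. Indeed, the second variation of $\mathcal A$ at $\Sigma$, the Gauss equation $R_{g_\Sigma}=R_g-2\Ric_g(\nu,\nu)+H^2-|A|^2$ with $H=h$ on $\Sigma$, the hypotheses $R_g>0$ and $|A|^2\ge0$, and the bound on $|\nabla h|$ combine (as in Gromov's $\mu$-bubble argument) to yield
\[
\int_\Sigma|\nabla\varphi|^2+\tfrac12\int_\Sigma R_{g_\Sigma}\,\varphi^2\;>\;0\qquad\text{for every }0\not\equiv\varphi\in C^\infty(\Sigma),
\]
i.e.\ $\lambda_1\big(-\Delta_{g_\Sigma}+\tfrac12 R_{g_\Sigma}\big)>0$. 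When $n-1\ge3$ this dominates positivity of the first eigenvalue of the conformal Laplacian of $g_\Sigma$, so a conformal change produces a genuine positive-scalar-curvature metric on $\Sigma$; when $n=3$ the same inequality forces $\chi(\Sigma)>0$, so $\Sigma\cong S^2$.

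Finally I would derive the contradiction from the Schoen--Yau obstruction. Because $[\Sigma]=[M^{n-1}\times\{t_0\}]$ with $t_0<t_X$, the map $\phi:=q|_\Sigma\colon\Sigma\to M^{n-1}$ satisfies $\phi_*[\Sigma]=[M^{n-1}]$, hence has degree one, so $\phi^*\colon H^*(M^{n-1};\Z)\to H^*(\Sigma;\Z)$ is injective. Pulling back the classes $\beta_1,\dots,\beta_{n-3}\in H^1(M^{n-1};\Z)$ provided by the SYS structure, the projection formula gives
\[
\phi_*\Big([\Sigma]\frown\big(\phi^*\beta_1\smile\cdots\smile\phi^*\beta_{n-3}\big)\Big)=[M^{n-1}]\frown\big(\beta_1\smile\cdots\smile\beta_{n-3}\big),
\]
which is non-spherical; since the $\phi_*$-image of a spherical class is spherical, the class on the left is non-spherical as well. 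Thus $\Sigma^{n-1}$ is itself a Schoen--Yau--Schick manifold, of dimension $n-1\le6$, so by Schoen--Yau's theorem it admits no metric of positive scalar curvature (when $n=3$, note instead that $S^2$ carries no degree-one map to a positive-genus surface). This contradicts the previous paragraph, so no such metric $g$ exists.
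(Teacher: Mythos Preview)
Your overall strategy is exactly the Chodosh--Li strategy the paper uses: pass to a band-like manifold by cutting along a copy of $M^{n-1}$, run a $\mu$-bubble to produce a closed hypersurface $\Sigma$ in the correct homology class, use stability plus the Gauss equation to make $\Sigma$ conformally PSC, then observe via a degree-one map that $\Sigma$ is itself SYS and invoke Schoen--Yau. The parts you label ``classical machinery'' are indeed fine, and your topological endgame (projection formula, non-spherical class survives under a degree-one map, the $n=3$ case via Euler characteristic) matches the paper's Lemmas~\ref{lem: SYS, degree 1} and~\ref{lem: SYS, hypersurface}.

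The step you flag as ``the main obstacle'' is a genuine gap, and your proposal does not contain the idea that closes it. Working in a \emph{single} fundamental domain $V$, you cannot in general build an $h$ with $h\to+\infty$ on $\partial_-V$, $h\to-\infty$ on $\partial_+V$ and on the ends of $X$, while maintaining the Riccati bound: the obstruction is not that $\partial_\pm V$ may be far apart (that is harmless---just take the frequency $L$ large), but that the compact region $M^{n-1}\times[-1,1]\setminus B$ on which you can arrange a \emph{uniform} lower bound $R\ge 1$ (after scaling) may be too \emph{short} to accommodate the full swing of a $\tan$-type profile. The paper's resolution, following \cite{soapbubble}, is to first pass to the \emph{infinite} cyclic cover $\hat Y=\hat M\#_{\Z}X$ obtained by cutting along $\Sigma_0$ and stacking copies; this gives unlimited ``compact room'' with $R>1$, so one can set $h=-\tfrac{1}{\sqrt a}\tan(\rho_1/L)$ there for any $L$ large enough to control $|\nabla\rho_1|$. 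On each end $X_k'$ (where one only knows $R>0$) the paper instead uses the rational profile
\[
h(x)=\frac{2L}{\sqrt a\,\bigl(\rho_1(x)-A_k-2L/\tan(L^{-1}A_k)\bigr)},
\]
which matches the $\tan$ value on $\partial X_k'$, blows up at finite depth into $X_k'$, and satisfies $ah^2-2|\nabla h|>0$ using only $R>0$. This two-piece construction is the content of Theorem~\ref{thm: main construction} (specialized to $m=n-1$, $a=1$), after which one truncates back to a finite cyclic cover. Your suggestion of a ``preliminary conformal change of $g$'' does not help here: the arbitrary ends of $X$ have no curvature control to trade against. Once you insert the cyclic-cover step and the explicit $h$, your argument becomes essentially identical to the paper's proof of Theorem~\ref{thm: SYS} specialized to $M^{n-1}\times S^1$.
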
 

The presence of the $S^1$ factor in the preceding theorem is to pass to an appropriate covering space in order to apply the $\mu$-bubble technique introduced by Gromov in \cite{gromov1996positive}. In this paper, we show that we can pass to the infinite cyclic cover obtained by cutting and pasting along a hypersurface (see Theorem~\ref{thm: main construction}), thereby %remove the $S^1$ factor from the theorem. We 
obtain a generalization of Chodosh and Li's result as follows:

\begin{thm}\label{thm: SYS}
Let $3 \le n \le 7$, and let $M^n$ be a Schoen--Yau--Schick manifold. For
any $n$-manifold $X$, the connected sum $M\#X$ does not admit a
complete metric of positive scalar curvature.
\end{thm}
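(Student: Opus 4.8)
The plan is to deduce the theorem from the Schoen--Yau inductive descent theorem \cite{SY:descent} (closed SYS manifolds of dimension $\le 7$ admit no metric of positive scalar curvature) by extracting, from a hypothetical complete metric of positive scalar curvature on $M\#X$, a \emph{closed} SYS manifold of dimension $n-1$ that nonetheless carries such a metric. So suppose $g$ is a complete metric of positive scalar curvature on $Y:=M\#X$. Since $M$ is orientable, $w_1(Y)$ vanishes on $M\setminus B^n$, so the orientation double cover of $Y$ carries a complete metric of positive scalar curvature (the pull-back) and is again a connected sum of $M$ with an $n$-manifold; we may therefore assume $Y$ is orientable. Because $M$ is SYS, fix nonzero classes $\beta_1,\dots,\beta_{n-2}\in H^1(M;\Z)$ with $[M]\frown(\beta_1\smile\cdots\smile\beta_{n-2})\in H_2(M;\Z)$ non-spherical, and fix a closed two-sided hypersurface $N^{n-1}\subset M\setminus B^n$ Poincar\'e dual to $\beta_1$, so that $[N]=[M]\frown\beta_1\neq 0$. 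Let $c\colon Y\to M$ be the collapsing map, equal to the identity on $M\setminus B^n$ and sending $X\setminus B^n$ into a ball of $M$ disjoint from $N$; then $c^*\beta_1\in H^1(Y;\Z)$ is represented by a map $Y\to S^1$ whose regular fiber is $N$.

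Applying Theorem~\ref{thm: main construction} to $c^*\beta_1$, I pass to the associated infinite cyclic cover $p\colon\widehat Y\to Y$, obtained by cutting $Y$ along $N$ and gluing $\Z$ copies end to end. Thus $\widehat Y$ is the infinite cyclic cover $\widehat M$ of $M$ dual to $\beta_1$, with a $\Z$-periodic family of copies of $X\setminus B^n$ attached along the lifts of $\partial B^n$; it carries the pulled-back complete metric $g$ of positive scalar curvature, an isometric $\Z$-action with quotient $Y$, and a function $\widehat\rho\colon\widehat Y\to\R$, proper on $\widehat M$, whose regular level sets inside $\widehat M$ are copies of $N$. For $L\gg 1$ put $W_L:=\widehat\rho^{-1}([-L,L])$, with boundary slices $\partial_\mp W_L\cong N$; note $W_L$ is \emph{not} compact, as the attached copies of $X\setminus B^n$ need not be. Two features drive the $\mu$-bubble argument of \cite{gromov1996positive,soapbubble}: first, $M\setminus B^n$ is compact, so $R_g\ge\sigma>0$ on all of $\widehat M$; second, by the isometric $\Z$-periodicity $\dist_g(\partial_-W_L,\partial_+W_L)\to\infty$ as $L\to\infty$. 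Fix $L$ with this distance exceeding $2\pi/\sqrt{\sigma}$, and choose a prescribing function $h$ on the interior of $W_L$ tending to $+\infty$ toward $\partial_-W_L$ and along the ends of the attached $X$-pieces, tending to $-\infty$ toward $\partial_+W_L$, and satisfying Gromov's inequality $2|\nabla h|<R_g+h^2$ throughout; such an $h$ exists once $L$ is large, and it forces the $\mu$-bubble minimizer to avoid the $X$-ends and to lie in the region $\{R_g\ge\sigma\}\subset\widehat M$. This minimizer is a \emph{closed}, smooth (since $n\le 7$), two-sided hypersurface $\widehat\Sigma^{n-1}$ in the interior of $W_L$ with $H_{\widehat\Sigma}=h$, and its stability inequality, rewritten via the Gauss equation, reads $\int_{\widehat\Sigma}(|\nabla\psi|^2+\tfrac12 R_{\widehat\Sigma}\psi^2)\ge\int_{\widehat\Sigma}\tfrac12(R_g+h^2+2\partial_\nu h)\psi^2>0$ for all $\psi\not\equiv 0$. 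For $n\ge 4$ this makes the conformal Laplacian of $\widehat\Sigma$ positive, hence (as $\dim\widehat\Sigma=n-1\ge 3$) yields a genuine metric of positive scalar curvature on $\widehat\Sigma$; for $n=3$, taking $\psi\equiv 1$ shows each component of $\widehat\Sigma$ has $\int K>0$.

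It remains to see $\widehat\Sigma$ is SYS. Set $\Phi:=c\circ p|_{\widehat\Sigma}\colon\widehat\Sigma\to M$, so $\Phi^*\beta_i=\beta_i|_{\widehat\Sigma}$. Homological bookkeeping---$\widehat\Sigma$ is homologous in $W_L$ to a reference slice modulo the connect-sum spheres $\partial B^n$, which $c$ collapses (they bound $B^n$)---gives $\Phi_*[\widehat\Sigma]=[N]=[M]\frown\beta_1\in H_{n-1}(M)$. By the projection formula,
\[
\Phi_*\bigl([\widehat\Sigma]\frown(\beta_2|_{\widehat\Sigma}\smile\cdots\smile\beta_{n-2}|_{\widehat\Sigma})\bigr)=[M]\frown(\beta_1\smile\beta_2\smile\cdots\smile\beta_{n-2}),
\]
which is non-spherical. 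By naturality of the Hurewicz homomorphism and the fact that spherical classes form a subgroup, some component $\widehat\Sigma_0$ of $\widehat\Sigma$ (orientable, being a two-sided hypersurface in the orientable $\widehat Y$) carries a non-spherical class $[\widehat\Sigma_0]\frown(\beta_2|_{\widehat\Sigma_0}\smile\cdots\smile\beta_{n-2}|_{\widehat\Sigma_0})$, i.e. $\widehat\Sigma_0$ is a closed SYS manifold of dimension $n-1\in\{2,3,\dots,6\}$. By \cite{SY:descent} when $n\ge 4$, or Gauss--Bonnet when $n=3$, $\widehat\Sigma_0$ admits no metric of positive scalar curvature---contradicting the previous paragraph.

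The main obstacle is the analysis behind the second paragraph: producing the $\mu$-bubble as a \emph{compact} hypersurface inside the \emph{non-compact} band $W_L$ (this is where the ``arbitrary ends'' of $X$ enter), arranging the prescribing function so that $\widehat\Sigma$ is pushed away from the ends of $X$---which carry no uniform lower scalar-curvature bound---into the region $\{R_g\ge\sigma\}$, and still closing up the stability inequality without any global lower bound on $R_g$. This is exactly what the cut-and-paste construction of Theorem~\ref{thm: main construction}, together with the generalized soap bubble technique of \cite{soapbubble}, is designed to accomplish; by contrast the topological bookkeeping above---descent of the SYS structure through the slice, two-sidedness and orientability, and passing to a component---is routine.
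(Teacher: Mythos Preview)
Your overall strategy is the paper's: pass to the infinite cyclic cover dual to $\beta_1$, run a $\mu$-bubble, show the resulting closed hypersurface has positive conformal Laplacian and is SYS, and contradict Schoen--Yau. The endgame is correct, and your SYS-descent via the collapsing map and the projection formula is essentially the same computation the paper packages into Lemmas~\ref{lem: SYS, degree 1}--\ref{lem: SYS, hypersurface}.

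Your sketch of the prescribing function $h$, however, contains two errors. First, you cannot take $h\to+\infty$ along \emph{all} attached $X$-ends while maintaining $2|\nabla h|<R_g+h^2$. On an $X$-piece attached on the $\partial_+$ side the value of $h$ at the connect-sum sphere is negative; to reach $+\infty$, $h$ must vanish somewhere inside that $X$-piece, and at that zero Gromov's inequality degenerates to $2|\nabla h|<R_g$. Since there is no uniform positive lower bound on $R_g$ on the $X$-pieces, this is in general impossible (take $R_g$ decaying fast enough that $\int R_g<\infty$ along a ray). The fix, which is precisely what Theorem~\ref{thm: main construction} does, is to send $h\to-\infty$ on the $X$-ends near $\partial_+W_L$ and $h\to+\infty$ on those near $\partial_-W_L$, so that $h$ never changes sign inside any single $X$-piece and $h^2$ alone can absorb $2|\nabla h|$ there. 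Second, the assertion that the minimizer ``lies in $\{R_g\ge\sigma\}\subset\widehat M$'' is false: $\widehat\Sigma$ may enter the bounded portion of an $X$-piece where $|h|<\infty$. This is harmless, because your displayed inequality $\int_{\widehat\Sigma}\tfrac12(R_g+h^2+2\partial_\nu h)\psi^2>0$ only uses Gromov's inequality, not the lower scalar bound; once the sign of the blow-up is corrected as above, that inequality holds on the $X$-portions as well. Since you ultimately defer the analytic construction to Theorem~\ref{thm: main construction}, these corrections do not alter your outline.
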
 
We note that a version of this result was obtained by Lesourd, Unger, and Yau \cite{lesourd2020positive} for $n=3$ and $4 \le n \le 7$ with certain additional technical hypothesis on $M \# X$.

In another direction to generalize the Geroch conjecture, Brendle, Hirsch, and Johne \cite{brendle2022generalization} defined a family of curvature conditions called \emph{$m$-intermediate curvature}, which reduces to Ricci curvature when $m = 1$ and to scalar curvature when $m = n - 1$. The precise definition is as follows:

\begin{defn}[$m$-intermediate curvature, \cite{brendle2022generalization}] Suppose $(N^n, g)$ is a Riemannian manifold.
Let $\Rm_N(X, Y,Z,W) = -g(\nabla_X \nabla_Y Z - \nabla_Y \nabla_X Z - \nabla_{[X,Y]}Z,W)$ denote the Riemann curvature tensor. Let $1 \le m \le n-1$. 
For every orthonormal basis $\{e_1, \dots, e_n\}$ of $T_p N$, we define
$$\cC_m(e_1, \dots, e_m) := \sum_{p=1}^m\sum_{q=p+1}^n 
  \Rm_N(e_p, e_q, e_p, e_q).
$$
Let 
$$\cC_m(p) : = \min \{\mathcal{C}_m(e_1, \dots, e_m) \mid \{e_1, \dots, e_n\} \text{ is an orthonormal basis of } T_p N\}.$$
Let $C \in \R$. Then we say $(N^n, g)$ has $m$-intermediate curvature $> C$ at $p \in N$, if $\cC_m(p) > C$.  We say $(N^n,g)$ has $m$-intermediate curvature $> C$, if it has $\cC_m(p) > C$ for all $p \in N$.
\end{defn}
In particular, at any $p \in N$, sectional curvature $\sec > 0$ implies $\cC_m(p) > 0$, which in turn implies scalar curvature $R >0$. On the other hand, $\Ric > 0$ at $p$ doesn't necessarily imply $\cC_m(p) > 0$ for $2 \le m \le n-2$.

Brendle, Hirsch, and Johne investigated topological obstructions to positive $m$-intermediate curvature and proved the following result. 

\begin{thm}{\cite[Theorem 1.5]{brendle2022generalization}}\label{thm: BHJ}
Let $3 \le n \le 7$ and $1 \le m \le n-1$. Let $N^n$ be a closed manifold of dimension $n$, and suppose that there exists a closed manifold $M^{n-m}$ and a map $F: N^n \to M^{n-m} \times \T^m$ with non-zero degree. Then the manifold $N^n$ does not admit a metric of positive $m$-intermediate curvature.  
\end{thm}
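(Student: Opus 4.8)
The plan is to argue by contradiction, assuming $N^n$ carries a metric $g$ with $\cC_m(p)>0$ for all $p$, and to run the Schoen--Yau inductive descent \cite{SY:descent} in the weighted form of Brendle--Hirsch--Johne \cite{brendle2022generalization}. First I would record the topology. Let $\alpha_1,\dots,\alpha_m\in H^1(\T^m;\Z)$ be the standard generators, let $\pi\colon M^{n-m}\times\T^m\to\T^m$ be the projection, and set $\lambda_i:=F^*\pi^*\alpha_i\in H^1(N;\Z)$. Since $F^*$ is a ring homomorphism and $F_*[N]=\deg(F)\cdot[M\times\T^m]$, the projection formula gives, for each $1\le k\le m$,
\[
F_*\big([N]\frown(\lambda_1\smile\cdots\smile\lambda_k)\big)\;=\;\pm\,\deg(F)\cdot[M]\times[\T^{m-k}]\;\ne\;0,
\]
where $[\T^{m-k}]$ is the class of a coordinate subtorus; in particular $[N]\frown(\lambda_1\smile\cdots\smile\lambda_m)\ne 0$ in $H_{n-m}(N;\Z)$.

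Next I would build a tower of smooth closed submanifolds $N=N_0\supset N_1\supset\cdots\supset N_m$ with $\dim N_k=n-k$, together with smooth positive weights $\rho_0\equiv1$ and $\rho_k$ on $N_k$, as follows. Given $(N_{k-1},\rho_{k-1})$, let $N_k\subset N_{k-1}$ minimize the weighted area $\Sigma\mapsto\int_\Sigma\rho_{k-1}\,d\cH^{n-k}$ among hypersurfaces representing the Poincar\'e dual of $\lambda_k|_{N_{k-1}}$. By the previous paragraph together with the identity $(\iota_{k-1})_*\big([N_{k-1}]\frown\lambda_k|_{N_{k-1}}\big)=[N]\frown(\lambda_1\smile\cdots\smile\lambda_k)$ (where $\iota_{k-1}\colon N_{k-1}\hookrightarrow N$, by induction on $k$), this homology class is nonzero, so the minimizer is non-trivial; and since the ambient dimension is $n-k+1\le 7$ and $\rho_{k-1}$ is smooth and positive (so that the weighted minimization is area-minimization in a conformal metric), regularity theory makes $N_k$ a smooth closed embedded hypersurface. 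This is the only place the bound $n\le 7$ enters, and in this closed setting no $\mu$-bubble is needed. Let $\nu_k$ be the unit normal of $N_k$ in $N_{k-1}$; then the restrictions of $\nu_1,\dots,\nu_m$ to $N_m$ form an orthonormal frame of the normal bundle of $N_m$ in $N$, and $N_m\ne\emptyset$. Finally, let $u_k>0$ be the first eigenfunction of the (weighted) stability operator of $N_k$ and put $\rho_k:=\rho_{k-1}|_{N_k}\cdot u_k^{a_k}$, where $a_k>0$ is a dimensional exponent to be fixed below.

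The analytic heart of the argument --- and the step I expect to be the main obstacle --- is to telescope the $m$ stability inequalities into a single contradiction. At level $k$, the second variation of the weighted area applied to a test function $\varphi$ on $N_k$ takes the form $\int_{N_k}|\nabla\varphi|^2\,\rho_{k-1}\ge\int_{N_k}V_k\,\varphi^2\,\rho_{k-1}$, where $V_k=\Ric_{N_{k-1}}(\nu_k,\nu_k)+|A_k|^2+(\text{terms in }\nabla\log\rho_{k-1})$ and $A_k$ is the second fundamental form of $N_k$ in $N_{k-1}$; moreover the first eigenfunction $u_k$, whose first eigenvalue $\mu_k$ is $\ge0$ by stability, yields a pointwise identity expressing $V_k$ in terms of $\mu_k$ and derivatives of $\log u_k$. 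Iterating the Gauss equation down the chain $N_{k-1}\subset\cdots\subset N$ and using the first-variation (prescribed mean curvature) equations, the ambient Ricci terms reorganize so that, when the $m$ inequalities are combined with the weights $\rho_k=\rho_{k-1}u_k^{a_k}$, the curvature contributions accumulate precisely to $\cC_m(\nu_1,\dots,\nu_m)$, while the remaining terms --- quadratic in the $A_k$ and in $\nabla\log u_k$ --- either cancel or acquire a favorable sign for a suitable choice of the exponents $a_k$. Testing the final ($k=m$) inequality with the constant function then gives, with $w:=\rho_{m-1}>0$,
\[
0\;\ge\;\int_{N_m}\cC_m(\nu_1,\dots,\nu_m)\,w\;\ge\;\int_{N_m}\cC_m(p)\,w\;>\;0,
\]
since $\cC_m(\nu_1,\dots,\nu_m)$ depends only on the normal $m$-plane of $N_m$ in $N$ (so it is $\ge\cC_m(p)$) and $N_m$ is a nonempty closed manifold --- a contradiction, which proves the theorem. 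Pinning down the exponents $a_k$ and checking that every error term across all $m$ levels really drops out with the correct sign is exactly the delicate computation carried out in \cite{brendle2022generalization}.
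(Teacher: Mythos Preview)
Your outline is essentially the Brendle--Hirsch--Johne argument that the paper cites (the paper itself does not prove Theorem~\ref{thm: BHJ}; it merely quotes it and then adapts the machinery in Section~5). The descent tower of weighted minimizers, the nontriviality via the cup-product/projection formula, and the telescoping of stability inequalities into a single curvature integral on the bottom slice are all exactly what \cite{brendle2022generalization} does.

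There is, however, a genuine misconception in your sketch. You assert that regularity of area minimizers is ``the only place the bound $n\le 7$ enters,'' and you plan to fix the signs of the leftover quadratic terms by tuning exponents $a_k$ in $\rho_k=\rho_{k-1}u_k^{a_k}$. Neither is how the proof works. In \cite{brendle2022generalization} one simply takes $\rho_k=\rho_{k-1}v_k$ with $v_k$ the first eigenfunction (no free exponents), and after the Gauss--equation bookkeeping the extrinsic remainder on each level is a concrete quadratic form $\mathcal V_k$ in the diagonal entries of $\sff_{\Sigma_k}$ (see the paper's Lemmas~\ref{lem: ExtrinsicCurvature_TopSlice}--\ref{lem: ExtrinsicCurvature_BottomSlice}). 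The nonnegativity of these $\mathcal V_k$ is \emph{not} automatic: it reduces to the algebraic inequality $m^2-mn+2n-2\ge 0$, which is exactly the second place the constraint $n\le 7$ enters (cf.\ the paper's Introduction and Lemma~\ref{lem: algebraic}). Xu's counterexamples \cite{xu2023dimension} show this is sharp, so there is no choice of ``exponents $a_k$'' that would rescue the sign in higher dimensions. You should replace the $a_k$-mechanism in your write-up by this algebraic step.

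Two smaller points: the weight that appears in the main inequality on $\Sigma_m$ is $\rho_{m-1}^{-1}$, not $\rho_{m-1}$ (compare Lemma~\ref{lem: StabilityInequality_BottomSlice} and Lemma~\ref{Lemma: MainInequality}); and the test function used on the bottom slice is $\psi=\rho_{m-1}^{-1}$ rather than the constant $1$. These do not affect the strategy but should be corrected in the final version.
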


There are two reasons for the presence of the dimensional constraint $n \le 7$ in Brendle--Hirsch--Johne's result. The first reason comes from the regularity theory of stable minimal hypersurfaces in geometric measure theory. The second reason is that their proof requires some algebraic quantity, $m^2 - mn + 2n -2$, to be nonnegative. Kai Xu \cite{xu2023dimension} demonstrated the optimality of the dimensional constraint $n \le 7$ by constructing concrete counterexamples. Namely, if $m^2 - mn + 2n -2 < 0$, then $S^{n-m}\times \T^m$ admits a metric of positive $m$-intermediate curvature.  

Chu--Kwong--Lee \cite{chu2022rigidity} proved a corresponding rigidity statement for non-negative $m$-intermediate curvature when $n \le 5$, which was extended to $n=6$ by Xu \cite{xu2023dimension}. Again, the dimensional constraint $n \le 6$ was shown by Xu to be optimal.

In this paper, we also apply the $\mu$-bubble technique to obtain the following generalization of Brendle--Hirsch--Johne's result to arbitrary ends:

\begin{thm}\label{thm: intermediate curvature}
Assume either $3 \le n \le 5$, $1 \le m \le n-1$ or $6 \le n \le 7$, $m \in \{1, n-2, n-1\}$.
Let $N^n$ be a closed manifold of dimension $n$, and suppose that there exists a closed manifold $M^{n-m}$ and a map $F: N^n \to M^{n-m} \times \T^{m}$ with non-zero degree. Then for any $n$-manifold $X$, the connected sum $N \#X$ does not admit a complete metric of positive $m$-intermediate curvature.
\end{thm}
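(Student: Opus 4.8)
The plan is to reduce the statement for $N \# X$ to the compact case (Theorem~\ref{thm: BHJ}) by a covering-space and $\mu$-bubble argument, exactly paralleling how Chodosh--Li's Theorem~\ref{thm: SYS times S1} was upgraded to Theorem~\ref{thm: SYS}, but carrying the more delicate $m$-intermediate curvature bookkeeping of Brendle--Hirsch--Johne. Suppose for contradiction that $g$ is a complete metric of positive $m$-intermediate curvature on $N \# X$. First I would use the degree-nonzero map $F \colon N \to M^{n-m} \times \T^m$ to produce $m$ cohomology classes pulled back from the torus factor; choosing one of them, say the one dual to an $S^1$ in $\T^m$, gives a hypersurface $\Sigma \subset N \# X$ (away from the connect-sum region, living in the $N$ part) whose homology class is detected by $F$. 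Cutting $N \# X$ along $\Sigma$ and regluing infinitely many copies produces a $\Z$-cover $\widehat{N}$ — this is the content of the cutting-and-pasting construction referenced as Theorem~\ref{thm: main construction} in the excerpt, which I would invoke to replace the literal $S^1$-factor in the old argument. The cover $\widehat{N}$ is still complete, still has positive $m$-intermediate curvature, and now has two ends: one ``cylindrical'' end modeled on the $\Z$-periodic $N$-geometry and one ``wild'' end coming from the $X$ summands; crucially the relevant cohomology classes become exact on $\widehat{N}$, so we gain the freedom to run the warped $\mu$-bubble minimization.

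The core analytic step is the $\mu$-bubble construction on $\widehat{N}$: one picks a smooth function $h$ on a suitable region interpolating between $-\infty$ and $+\infty$ across a ``neck'' separating the two ends, and minimizes the prescribed-mean-curvature functional $\mathcal{A}(\Omega) = \mathcal{H}^{n-1}(\partial^*\Omega) - \int (\chi_\Omega - \chi_{\Omega_0}) h \, d\mathcal{H}^n$. Standard GMT (here is where $n \le 7$ first enters) yields a smooth stable minimizer $\Sigma_1$, which is closed and separates the ends, and which — because the $h$-barrier traps it — still carries the nonzero homological data pulled back via $F$. Then I would iterate: on $\Sigma_1$ the stability inequality for the $\mu$-bubble, combined with the algebraic inequality $m^2 - mn + 2n - 2 \ge 0$ that is available precisely in the stated dimension/codimension range (either $3 \le n \le 5$ with any $m$, or $n \in \{6,7\}$ with $m \in \{1, n-2, n-1\}$), lets one conformally change the metric on $\Sigma_1$ so that it has positive $(m-1)$-intermediate curvature while retaining a degree-nonzero map to $M^{n-m} \times \T^{m-1}$. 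Descending $m$ times this way — the $\mu$-bubble step is needed only at the first stage to cut off the bad end, and thereafter one uses ordinary stable minimal hypersurfaces in the now-compact slices as in Brendle--Hirsch--Johne — one arrives after $m$ steps at a closed manifold mapping with nonzero degree to $M^{n-m}$ and carrying positive $1$-intermediate curvature, i.e. positive Ricci curvature, in dimension $n - m$; equivalently one reaches a contradiction with Theorem~\ref{thm: BHJ} applied to a closed manifold, or more directly by noting the final slice would be an $(n-m)$-manifold of positive Ricci curvature admitting a degree-nonzero map to $M^{n-m}$ which is itself an obstruction (when $n - m \ge 2$; the base case $n-m$ small is handled by hand).

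The main obstacle I expect is \emph{propagating the non-vanishing homological/degree condition through the $\mu$-bubble}. Unlike a genuine minimal hypersurface in a fixed homology class, the $\mu$-bubble minimizer $\Sigma_1$ is only controlled up to the choice of $h$ and the competitor $\Omega_0$, so one must argue that for an appropriate choice the class $[\Sigma_1] \in H_{n-1}(\widehat N; \Z)$ still pairs nontrivially with the pulled-back torus classes — this uses that $\Sigma_1$ separates the cylindrical end from the wild end and a Mayer--Vietoris / intersection-number computation on the $\Z$-cover, together with the fact that the relevant class became exact only after passing to the cover. A secondary technical point is bookkeeping the intermediate curvature under the conformal changes and the passage to slices: one must verify that each descent step genuinely decreases $m$ by one while the dimensional inequality $m^2 - mn + 2n - 2 \ge 0$ remains valid for the induced data, which is why the theorem's hypothesis restricts $m$ when $n \in \{6,7\}$ — outside that range either the GMT regularity or the algebra fails, matching Xu's counterexamples. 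Everything else (completeness of the cover, existence of the separating neck, the stability-to-curvature conversion) is routine adaptation of the cited works.
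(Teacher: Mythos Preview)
Your overall architecture (pass to an infinite cyclic cover by cutting along a hypersurface dual to one torus class, then use a $\mu$-bubble to produce a first closed slice $\Sigma_1$ carrying the remaining homological data) matches the paper. But the descent step you describe after obtaining $\Sigma_1$ is not correct, and you misidentify the source of the dimensional restriction.

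First, the claim that the $\mu$-bubble stability inequality lets you ``conformally change the metric on $\Sigma_1$ so that it has positive $(m-1)$-intermediate curvature'' is unsupported and almost certainly false for $2 \le m \le n-2$. That trick works only at the scalar-curvature endpoint $m = n-1$, where the traced Gauss equation plus stability controls the conformal Laplacian. For general $m$, intermediate curvature does not transform usefully under conformal change, and Brendle--Hirsch--Johne do \emph{not} proceed by inductive descent in $m$; they build a full chain $\Sigma_m \subset \cdots \subset \Sigma_1 \subset N$ of \emph{weighted} stable minimal hypersurfaces and derive one combined inequality on the bottom slice $\Sigma_m$. The paper follows the same scheme, introducing a \emph{modified stable weighted slicing} in which only the top slice $\Sigma_1$ is a $\mu$-bubble (so $H_{\Sigma_1} = h$ rather than $0$) and the lower slices are BHJ-style weighted minimizers; the contradiction then comes from a pointwise estimate on $\Sigma_m$, not from any conformal step.

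Second, the inequality $m^2 - mn + 2n - 2 \ge 0$ is the BHJ constraint and holds for \emph{all} $1 \le m \le n-1$ when $n \le 7$; it does not explain the gap in the theorem's hypothesis. The actual new restriction comes from the fact that the top slice now has $H_{\Sigma_1} = h \neq 0$: to absorb the extra $H_{\Sigma_1}^2$ term one needs the sharper estimate $\mathcal{V}_1 \ge a H_{\Sigma_1}^2$ for some $a > 0$, and this forces the additional algebraic condition $m^2 - mn + m + n > 0$. That second inequality is what fails for $n \in \{6,7\}$ and $2 \le m \le n-3$, and is the real reason the theorem excludes those cases.
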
 

For example, this implies that a punctured manifold of the form $(M^{n-m} \times \mathbb{T}^{m}) \setminus \{\text{point}\}$ does not admit a complete
metric of positive $m$-intermediate curvature when $n$ and $m$ are in the given range. 
Notice that we have a gap here; this is because in our proof, we need some extra algebraic quantity involving $m$ and $n$ to be positive (see Lemma~\ref{lem: algebraic} and Remark~\ref{rem: dimension constraint}). It is an interesting question whether the same result still holds when $6 \le n \le 7$ and $2 \le m \le n-3$.

This paper is organized as follows. In Section 2, we give some topological preliminaries. In Section 3, we discuss $\mu$-bubbles and prove a key result, Theorem~\ref{thm: main construction}, which allows us to reduce the non-compact setting to a compact setting. Using this, we give the proof of Theorem ~\ref{thm: SYS} in Section 4 and the proof
of Theorem~\ref{thm: intermediate curvature} in Section 5.

 \subsection*{Acknowledgments.}
The author is grateful for many useful discussions and suggestions of Otis Chodosh. The author is also grateful for helpful conversations with Sven Hirsch on the work \cite{brendle2022generalization}. The author also wants to thank the reviewers for their careful reading of the manuscript and their constructive remarks. The author is partially sponsored by the Ric Weiland Graduate Fellowship at Stanford University.

\section{Topological Preliminaries}
In this section we collect some basic topological facts for later use.
\begin{lem}\label{lem: representable}
Let $M^n$ be a closed connected orientable smooth manifold and let $0 \neq \alpha \in H_{n-1}(M;\Z)$ be a nonzero homology class. Then $\alpha$ is represented by a closed embedded orientable hypersurface $\Sigma$. %When $n \ge 3$ we can take $\Sigma$ to be connected.
\end{lem}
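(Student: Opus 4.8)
The plan is to translate the homological hypothesis into a smooth map to the circle and take a generic fiber, exploiting that $S^1$ is a $K(\Z,1)$, so that degree-one cohomology classes are precisely homotopy classes of maps to $S^1$, and that Poincaré duality lets us view $\alpha$ as such a class.

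First I would use Poincaré duality: since $M$ is closed, connected and orientable, capping with the fundamental class gives an isomorphism $H^1(M;\Z)\xrightarrow{\sim}H_{n-1}(M;\Z)$, $u\mapsto [M]\frown u$, so there is a unique $u\in H^1(M;\Z)$ with $[M]\frown u=\alpha$. Because $S^1=K(\Z,1)$, the assignment $f\mapsto f^*\mu$ (with $\mu$ the standard generator of $H^1(S^1;\Z)$) is a bijection $[M,S^1]\to H^1(M;\Z)$, so I can choose a continuous map $f\colon M\to S^1$ with $f^*\mu=u$; replacing $f$ by a smooth approximation, which does not change the induced map on cohomology, I may assume $f$ is smooth. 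By Sard's theorem I then pick a regular value $y\in S^1$ and set $\Sigma:=f^{-1}(y)$; since $M$ is compact this is a closed embedded smooth hypersurface.

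It remains to check orientability and that $[\Sigma]=\alpha$. The normal bundle of $\Sigma$ in $M$ is the pullback under $f$ of the (trivial) normal bundle of $\{y\}$ in $S^1$, hence trivial; combined with the orientability of $M$ and the splitting $TM|_\Sigma\cong T\Sigma\oplus\nu_{\Sigma/M}$, this forces $\Sigma$ to be orientable, and I fix the orientation on $\Sigma$ compatibly with the co-orientation induced by $f$ and the orientations of $M$ and $S^1$. For the homology class, I would invoke the standard transversality formula: the fundamental class of a transverse preimage $f^{-1}(y)$ is Poincaré dual to $f^*$ of the cohomology class Poincaré dual to $[y]$; since $\mu$ is Poincaré dual to $[y]\in H_0(S^1;\Z)$, this gives $\mathrm{PD}_M[\Sigma]=f^*\mu=u$, i.e.\ $[\Sigma]=[M]\frown u=\alpha$. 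In particular $\Sigma\neq\emptyset$, as $\alpha\neq 0$.

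I do not expect a genuine obstacle here, as this is a classical fact; the only point deserving care is the orientation bookkeeping in the last step, namely ensuring that the transversality co-orientation is the one giving $[\Sigma]=\alpha$ rather than $-\alpha$, which is automatic since the orientation of $\Sigma$ is ours to choose.
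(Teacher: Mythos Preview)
Your proof is correct and follows essentially the same approach as the paper: Poincar\'e duality, the identification $H^1(M;\Z)\cong[M,S^1]$ via $S^1=K(\Z,1)$, and Sard's theorem to take a regular fiber. You have in fact supplied more detail than the paper does---the explicit orientability argument via triviality of the normal bundle and the verification that $[\Sigma]=\alpha$---whereas the paper simply asserts these conclusions follow from the regular value theorem.
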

\begin{proof}
Notice the space $S^1$ is a $K(\Z,1)$, so $H^1(M; \Z) = [M, S^1]$, where $[M, S^1]$ are homotopy classes of maps from $M$ to $S^1$. Thus we can choose a non-constant smooth map $f: M \to S^1$ representing the Poincar\'e dual of $\alpha$ in $H^1(M; \Z)$. By Sard's theorem we can take the preimage $\Sigma$ of a regular value as a representative of $\alpha$. Then $\Sigma$ is a closed embedded orientable hypersurface by the regular value theorem.

\end{proof}

\begin{lem}
Let $M^n$ be a closed connected orientable manifold and let $\Sigma^{n-1} \subset M$ be an orientable closed embedded connected hypersurface. Then $\Sigma$ is separating (i.e., $M\setminus \Sigma$ is the disjoint union of 2 connected open sets in $M$) if and only if $[\Sigma] = 0$ in $H_{n-1}(M;\Z)$. 
\end{lem}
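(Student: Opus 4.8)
The plan is to first record two preliminary reductions and then treat the two implications separately. Since $\Sigma$ is an orientable closed hypersurface of the orientable manifold $M$, the splitting $TM|_{\Sigma} = T\Sigma \oplus \nu$ shows its normal bundle $\nu$ is an orientable real line bundle, hence trivial; so $\Sigma$ is two-sided and by the tubular neighborhood theorem has an open neighborhood $\cN \cong \Sigma \times (-1,1)$ identifying $\Sigma$ with $\Sigma \times \{0\}$. Next I would observe that $M \setminus \Sigma$ has \emph{at most} two connected components: each component $C$ is open in $M$ and, since $M$ is connected and $\Sigma$ nonempty, $\overline{C}$ meets $\Sigma$; hence $C$ meets $\cN \setminus \Sigma = (\Sigma\times(0,1)) \sqcup (\Sigma\times(-1,0))$, and as $C$ is a component while these two sets are connected, $C$ contains $\Sigma\times(0,1)$ or $\Sigma\times(-1,0)$, leaving room for at most two components. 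Consequently ``$\Sigma$ separating'' is exactly ``$M\setminus\Sigma$ disconnected,'' and it suffices to show that $[\Sigma]=0$ in $H_{n-1}(M;\Z)$ if and only if $M\setminus\Sigma$ is disconnected.

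For the ``if'' direction, write $M\setminus\Sigma = U_+ \sqcup U_-$ with $\Sigma\times(0,1)\subseteq U_+$ and $\Sigma\times(-1,0)\subseteq U_-$. A check using the collar shows $W_+ := \overline{U_+} = U_+ \cup \Sigma$ is a compact oriented smooth manifold with boundary $\Sigma$ (indeed $W_+ \cap \cN = \Sigma\times[0,1)$ is a boundary collar). Then $[\Sigma] = \partial[W_+,\partial W_+]$, the image of the relative fundamental class under $\partial\colon H_n(W_+,\Sigma;\Z)\to H_{n-1}(\Sigma;\Z)$; by exactness of the long exact sequence of the pair $(W_+,\Sigma)$ its image in $H_{n-1}(W_+;\Z)$ is zero, and pushing forward along $W_+\hookrightarrow M$ gives $i_*[\Sigma]=0$ in $H_{n-1}(M;\Z)$, where $i\colon\Sigma\hookrightarrow M$ is the inclusion.

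For the ``only if'' direction I would prove the contrapositive: if $M\setminus\Sigma$ is connected then $[\Sigma]\neq 0$. Fix $p_0\in\Sigma$ and let $\tau(s) = (p_0,s)$, $s\in[-\tfrac12,\tfrac12]$, be the short transversal through $p_0$, which meets $\Sigma$ in the single point $p_0$. Since $M\setminus\Sigma$ is a connected, hence path-connected, manifold, join the endpoints $(p_0,\tfrac12)$ and $(p_0,-\tfrac12)$ by a smooth path inside $M\setminus\Sigma$; concatenating with $\tau$, perturbing rel endpoints within $M\setminus\Sigma$, and rounding corners yields a smooth loop $\gamma$ in $M$ meeting $\Sigma$ transversally in exactly one point, so the algebraic intersection number $\gamma\cdot\Sigma$ is $\pm1$. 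But on the closed oriented manifold $M$ this number depends only on the homology classes of $\gamma$ and $\Sigma$ (for instance $\gamma\cdot\Sigma = \langle\mathrm{PD}[\Sigma],[\gamma]\rangle$ with $\mathrm{PD}[\Sigma]\in H^1(M;\Z)$), so $[\Sigma]=0$ would force $\gamma\cdot\Sigma=0$ for every loop $\gamma$, a contradiction; hence $[\Sigma]\neq 0$. One may instead run this last step with $\Z/2$-coefficients, avoiding any orientation choices and bounding $[\Sigma]$ away from $0$ already in $H_{n-1}(M;\Z/2)$.

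The real content is thus divided between the two directions --- a cobordism/fundamental-class argument one way, Poincar\'e duality applied to a transversely built loop the other --- and neither is deep. The only points demanding genuine care are the elementary topological bound ``at most two components'' in the first step and the standard transversality perturbation producing $\gamma$, so I do not anticipate a serious obstacle. I also note that one can give a single unified argument by representing $\mathrm{PD}[\Sigma]\in H^1(M;\Z)=[M,S^1]$ by a map $f\colon M\to S^1$ with $f^{-1}(\mathrm{pt})=\Sigma$ and relating ``$\Sigma$ separating'' to ``$f$ lifts to a map $M\to\R$,'' but that route requires more hands-on function-building than the one above.
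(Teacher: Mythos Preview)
Your proof is correct. For the contrapositive direction (non-separating $\Rightarrow$ $[\Sigma]\neq 0$) you and the paper do exactly the same thing: build a loop meeting $\Sigma$ transversally once and invoke the homological invariance of the intersection number.

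For the other direction the paper takes a more elaborate route: it sets up the Mayer--Vietoris sequence for the cover by a tubular neighborhood $U$ of $\Sigma$ and its complement $V=M\setminus\Sigma=M_+\cup M_-$, identifies the relevant maps, and uses exactness to conclude that $i_*\colon H_{n-1}(\Sigma)\to H_{n-1}(M)$ is zero. Your argument is more direct: you simply observe that $W_+=\overline{U_+}$ is a compact oriented manifold with $\partial W_+=\Sigma$, so $[\Sigma]=\partial[W_+,\partial W_+]$ dies already in $H_{n-1}(W_+)$ by the long exact sequence of the pair. This is the cleaner path; in fact the paper's Mayer--Vietoris computation implicitly relies on exactly this bounding observation (``$\Sigma$ is the boundary of $M_+$, hence null-homologous in $M_+$'') but then wraps further machinery around it. Your added preliminary remarks---that $\Sigma$ is two-sided and that $M\setminus\Sigma$ has at most two components---are also worthwhile clarifications that the paper leaves implicit.
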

\begin{proof} 
Suppose $\Sigma$ is non-separating, then $M \setminus \Sigma$ is connected, so there exists a simple loop $S$ in $M$
 which crosses $\Sigma$ transversally in exactly one point. Orient $S$ so that this intersection is positive. Then the oriented intersection number $I([\Sigma], [S])$ equals $1$. Since the oriented intersection number is independent of the representative of the homology class, it follows that $\Sigma$ is homologically nontrivial.

Conversely, suppose that $\Sigma$ separates. %Then for every simple loop $S$ in $M$ transversal to $\Sigma$, the oriented intersection number $I([\Sigma], [S])$ is zero. Then by the above argument, $\Sigma$ is null-homologous.
Let $U$ be a tubular neighborhood of $\Sigma$, and let $V = M \setminus \Sigma = M_+ \cup M_-$. Then $\partial V = \partial M_+ \cup \partial M_- = \Sigma_+ \cup \Sigma_-$, where $\Sigma_+$ has the same orientation as $\Sigma$ while $\Sigma_-$ has the opposite orientation.

Let $i:\Sigma \to M$, $i_1: U \cap V \to U$, $i_2: U \cap V \to V$, $j_1: U \to M$, $j_2: V \to M$ be inclusion maps.
Consider the Mayer-Vietoris sequence in singular homology with $\Z$ coefficients:
\begin{align*}
\dots \to H_{n-1}(U \cap V) \xrightarrow{((i_1)_*, (i_2)_*)} H_{n-1}(U) \oplus H_{n-1}(V) \xrightarrow{(j_1)_* - (j_2)_*} H_{n-1}(M) \to \dots 
\end{align*}

Since $U \cap V$ is homotopy equivalent to a disjoint union $\Sigma_+ \cup \Sigma_-$, we have $H_{n-1}(U \cap V) \cong H_{n-1}(\Sigma_+)\oplus H_{n-1}(\Sigma_-)$, and the map $(i_1)_*: H_{n-1}(U \cap V) \to H_{n-1}(U)$ is given by $H_{n-1}(\Sigma_+)\oplus H_{n-1}(\Sigma_-) \to H_{n-1}(\Sigma), (a[\Sigma_+],b[\Sigma_-]) \mapsto (a-b)[\Sigma]$. Since $\Sigma$ is the boundary of $M_+$, $\Sigma$ is null-homologous in $M_+$ hence also in $V$, showing the map $H_{n-1}(\Sigma) \xrightarrow{\cong} H_{n-1}(U \cap V)  \to H_{n-1}(V)$ is the zero map. Thus the Mayer-Vietoris sequence becomes
\begin{align*}
\dots \to H_{n-1}(\Sigma_+) \oplus H_{n-1}(\Sigma_-) \xrightarrow{((i_1)_*, 0)} H_{n-1}(\Sigma) \oplus H_{n-1}(V) \xrightarrow{i_* - (j_2)_*} H_{n-1}(M) \to \dots   
\end{align*}
Exactness at $H_{n-1}(\Sigma) \oplus H_{n-1}(V)$ shows that $H_{n-1}(\Sigma) \oplus \{0\} = \im \, ((i_1)_*, 0) = \Ker\, (i_* - (j_2)_*)$, so $H_{n-1}(\Sigma) = \Ker i_*$. That is, $i_*: H_{n-1}(\Sigma) \to H_{n-1}(M)$ is the zero map, which means $[\Sigma]$ is trivial in $H_{n-1}(M)$.
\end{proof}

\begin{constr}[$d$-cyclic cover]

Let $M$ be a closed connected $n$-manifold. Let $\Sigma$ be an embedded closed connected non-separating hypersurface in $M$. Given any integer $d \ge 1$ or $d = \infty$, we can obtain a $d$-cyclic cover $\hat{M}$ by cutting and pasting along $\Sigma$. The construction is as follows: 

Cut $M$ along $\Sigma$. Let $\tilde{M} = M \setminus \Sigma$. Then $\tilde{M}$ is a connected manifold with boundary, and $\partial \tilde{M}$ has two components, both diffeomorphic to $\Sigma$. Denote $\partial \tilde{M} = \Sigma_- \cup \Sigma_+$. Let $G = \Z/ d\Z$ when $d$ is finite and $G = \Z$ when $d =\infty$. Let $\tilde{M}_k$, $k \in G$ be $d$ copies of $\tilde{M}$. Glue together $\tilde{M}_k$ along the boundary by gluing the $\Sigma_+$ boundary component of $\tilde{M}_k$ with the $\Sigma_-$ boundary component of $\tilde{M}_{k+1}$. Denote the resulting manifolds by $$\hat{M} = \cup_{k \in G} \tilde{M}_k/ \sim,$$
where the equivalence relation $\sim$ is the gluing we just described. Then $\hat{M}$ is a $d$-cyclic cover of $M$.
\end{constr}

\section{$\mu$-bubbles}
In this section we first collect some general existence and stability results for $\mu$-bubbles. We refer the reader to \cite{soapbubble} for more details, where they considered more generally the warped $\mu$-bubbles. For us, we do not need the warping and we simply take the warping function $u=1$. We then use $\mu$-bubbles to prove a key result, Theorem~\ref{thm: main construction}, which is going to be applied in the proofs of both Theorem~\ref{thm: SYS} and Theorem~\ref{thm: intermediate curvature}.

We begin by fixing some notations. For a Riemannian manifold $(M^n,\overline{g})$ we consider its
Levi-Civita connection $D$ and its 
Riemann curvature tensor $\Rm_M$ given by the formula
\[
 \Rm_M(X,Y,Z,W)
 =
 -\overline{g}( D_X D_Y Z - D_Y D_X Z - D_{[X,Y]} Z ,W)
\]
for vector fields $X,Y,Z,W \in \Gamma(TM)$.

Consider a two-sided embedded submanifold $(\Sigma^{n-1},g)$ with induced metric.
We denote its induced Levi-Civita connection by $D_{\Sigma}$ and its unit normal vector field by $\nu \in \Gamma(N\Sigma)$.
We define its scalar-valued second fundamental form $\sff_{\Sigma}$ by $\sff_{\Sigma}(X,Y):= \langle D_X \nu, Y\rangle$. We define the scalar mean curvature of $\Sigma$ by $H_{\Sigma} = \tr_g \sff_{\Sigma}$. The gradient of a smooth function on $M$ or $\Sigma$
is denoted by $D_M f$ or $D_{\Sigma} f$.

For $n\leq 7$, consider $(M,\overline{g})$, a Riemannian $n$-manifold with boundary, and assume that $\partial M = \partial_- M \cup \partial_+ M$ is a choice of labeling the components of $\partial M$ so that neither of the sets $\partial_\pm M$ are empty. Fix a smooth function $h$ on $\mathring M$ with $h\to \pm \infty$ on $\partial_\pm M$. 
Choose a Caccioppoli set $\Omega_0$ with smooth boundary $\partial\Omega_0 \subset \mathring M$ and $\partial_+ M\subset \Omega_0$. 

Consider the following functional
\begin{equation}\label{problem.variation}
\cA(\Omega)=\cH^{n-1}(\partial^* \Omega) - \int_M (\chi_\Omega-\chi_{\Omega_0})h \, d\cH^n,
\end{equation}
for all Caccioppoli sets $\Omega$ in $M$ with $\Omega\Delta \Omega_0\Subset \mathring M$. We will call a Caccioppoli set $\Omega$ minimizing $\cA$ in this class a $\mu$-bubble. 

The functional $\cA$ was first considered by Gromov in \cite{gromov1996positive}. 
The existence and regularity of a minimizer of $\cA$ among all Caccioppoli sets was
claimed by Gromov in \cite[Section 5.2]{gromov2019fourlectures}, and was rigorously carried out in \cite[Proposition 2.1]{zhu2021width} and also in \cite[Proposition 12]{soapbubble}. We thus record it here.
\begin{prop}[{\cite[Proposition 2.1]{zhu2021width}}{\cite[Proposition 12]{soapbubble}}]\label{prop: existence.regularity}
There exists a smooth minimizer $\Omega$ for $\cA$ such that $\Omega\Delta \Omega_0$ is compactly contained in the interior of $M_1$.
\end{prop}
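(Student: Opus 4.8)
The plan is to run the direct method of the calculus of variations to produce a minimizer of $\cA$, and then to invoke the regularity theory for almost minimizing boundaries; the only point beyond the classical Euclidean theory is the behaviour of competitors near $\partial M$, which is controlled by the hypothesis $h\to\pm\infty$ on $\partial_\pm M$. I will assume $M$ is compact, as in the intended applications (otherwise one exhausts $M$ by compact subregions). Write $\mu:=\inf\cA$ over the admissible class; since $\Omega_0$ is admissible and $\cA(\Omega_0)=\cH^{n-1}(\partial\Omega_0)<\infty$, one has $\mu<\infty$.

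The first step, and the technical heart of the argument, is a family of \emph{barrier estimates}. Because $\partial_+M\subset\Omega_0$, $\partial_-M$ lies outside a neighbourhood of $\overline{\Omega_0}$, and $\partial\Omega_0\subset\mathring M$, for all large $s$ the superlevel set $U_s^+:=\{h>s\}$ is a collar of $\partial_+M$ contained in $\Omega_0$, and $U_s^-:=\{h<-s\}$ is a collar of $\partial_-M$ disjoint from $\Omega_0$; by the coarea formula each has finite perimeter for a.e.\ such $s$, with $\cH^{n-1}(\partial^*U_s^\pm)$ bounded uniformly in $s$. Testing $\cA$ on a competitor $\Omega$ against $(\Omega\cup U_s^+)\setminus U_s^-$, and using submodularity of the perimeter together with $|h|>s$ on $U_s^\pm$, I would extract a uniform perimeter bound $\cH^{n-1}(\partial^*\Omega)\le\cA(\Omega)+C$ and ``no leakage'' estimates $|(\Omega_0\setminus\Omega)\cap U_s^+|+|(\Omega\setminus\Omega_0)\cap U_s^-|\le C/s$, with $C$ independent of $s$ and of $\Omega$.

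With these in hand the rest is routine. Take a minimizing sequence $\Omega_i$; the perimeters are uniformly bounded, so by $BV$ compactness a subsequence has $\chi_{\Omega_i}\to\chi_\Omega$ in $L^1(M)$ for some Caccioppoli set $\Omega$, and the no-leakage estimates pass to the limit, forcing $\overline{\Omega\Delta\Omega_0}$ to be compact in $\mathring M$, so $\Omega$ is admissible. Lower semicontinuity of $\cA$ along the sequence then follows from lower semicontinuity of the perimeter, dominated convergence for the $h$-integral over the compact set $\{|h|\le s\}$, and Fatou's lemma for the pieces over $U_s^\pm$, where the integrand has a favourable sign because $U_s^+\subset\Omega_0$ and $U_s^-\cap\Omega_0=\emptyset$; hence $\cA(\Omega)\le\liminf_i\cA(\Omega_i)=\mu$ and $\Omega$ is a minimizer. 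For regularity, I would note that on any ball $B\Subset\mathring M$ the function $h$ is bounded, so every competitor $\Omega'$ with $\Omega'\Delta\Omega\subset B_r\subset B$ satisfies $|\cA(\Omega')-\cA(\Omega)-\cH^{n-1}(\partial^*\Omega')+\cH^{n-1}(\partial^*\Omega)|\le\|h\|_{L^\infty(B)}\,\omega_n r^n$; thus $\Omega$ is a $(\Lambda,r_0)$-minimizer of the perimeter in $\mathring M$. By the $\varepsilon$-regularity theorem and density estimates for almost minimizing boundaries, together with Federer's dimension-reduction argument, $\partial^*\Omega$ is a $C^{1,\alpha}$ embedded hypersurface in $\mathring M$ away from a closed singular set of Hausdorff dimension $\le n-8$, which is empty since $n\le7$; so $\partial\Omega=\partial^*\Omega$ is smooth of class $C^{1,\alpha}$. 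The first variation of $\cA$ is the prescribed-mean-curvature equation $H_{\partial\Omega}=h$ along $\partial\Omega$, and since $h\in C^\infty(\mathring M)$, Schauder estimates and bootstrapping promote $\partial\Omega$ to a $C^\infty$ closed hypersurface contained in $\mathring M$.

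The hard part is the confinement: showing $\overline{\Omega\Delta\Omega_0}\Subset\mathring M$. The perimeter bound and the no-leakage estimate come cheaply, but to conclude that the minimizer actually agrees with $\Omega_0$ on a whole neighbourhood of $\partial M$ one must push harder, via a maximum-principle (sweep-out) argument that uses the level hypersurfaces of $h$ near $\partial M$ as barriers; this is exactly where $h\to\pm\infty$ enters, and in practice one chooses $h$ so that these level hypersurfaces are suitably mean-convex. Everything after confinement is standard geometric measure theory, the assumption $n\le7$ serving only to guarantee that the singular set of the minimizing boundary is empty.
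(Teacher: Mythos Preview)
The paper does not give its own proof of this proposition; it simply records the statement and cites \cite[Proposition~2.1]{zhu2021width} and \cite[Proposition~12]{soapbubble} for the argument. Your sketch is in line with what those references do---direct method, BV compactness, and almost-minimizer regularity in dimension $n\le 7$---and you correctly isolate confinement near $\partial M$ as the only nontrivial issue.

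One internal inconsistency worth flagging: in your third paragraph you assert that the no-leakage bounds ``force $\overline{\Omega\Delta\Omega_0}$ to be compact in $\mathring M$,'' but an estimate of the form $|(\Omega\Delta\Omega_0)\cap U_s^\pm|\le C/s$ controls only the \emph{measure} of the symmetric difference in the collar, not its \emph{support}, so it does not by itself make the limit admissible. You yourself say as much in the final paragraph, so this is an expository glitch rather than a conceptual one; but as written the proof claims confinement before it has been established. In the cited references confinement is handled not by a mean-convex sweep-out but by a replacement/comparison step: one first produces a minimizer in the truncated problem on $\{|h|\le s\}$ (where $h$ is bounded and existence is classical), and then shows via the competitor $(\Omega\cup U_s^+)\setminus U_s^-$ that for $s$ large the minimizer already coincides with $\Omega_0$ on $U_s^+\cup U_s^-$, so it is a minimizer for the original problem as well.
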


We next discuss the first and second variation for a $\mu$-bubble.
\begin{lem}[{\cite[Lemma 13]{soapbubble}}]\label{lem: 1st-var}
If $\Omega_t$ is a smooth $1$-parameter family of regions with $\Omega_0 = \Omega$ and normal speed $\psi$ at $t=0$, then 
\[\frac{d}{dt}\cA (\Omega_t)=\int_{\Sigma_t} (H-h)\psi  \, d\cH^{n-1}\]
where $H$ is the scalar mean curvature of $\partial\Omega_t$. In particular, a $\mu$-bubble $\Omega$ satisfies \[
H = h
\]
along $\partial\Omega$. 
\end{lem}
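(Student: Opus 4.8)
The plan is to compute the two summands of $\cA(\Omega_t)$ separately using the standard first-variation formulas of geometric measure theory, and then read off the Euler--Lagrange equation for a minimizer. Fix the convention that $\nu$ is the outward unit normal of $\Omega_t$ along $\Sigma_t=\partial\Omega_t$, so that $H=H_{\Sigma_t}=\tr_g\sff_{\Sigma_t}$ is the scalar mean curvature with respect to $\nu$. After reparametrizing in $t$ we may assume the variation field at time $t$ is $V_t=\psi\,\nu$ with $\psi\in C^\infty_c$ supported away from $\partial M$: only the normal component of the variation field contributes to either term (a tangential field on $\Sigma_t$ generates $\cH^{n-1}$-preserving diffeomorphisms, and the bulk integrand sees only $\langle V,\nu\rangle$), and the constraint $\Omega_t\Delta\Omega_0\Subset\mathring M$ guarantees that all integrals below are over compact regions and that differentiation under the integral sign is justified.

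For the perimeter term: since $\partial\Omega_0\subset\mathring M$ is smooth and $\Omega$ is the smooth minimizer provided by Proposition~\ref{prop: existence.regularity}, the reduced boundary $\partial^*\Omega_t$ coincides with the smooth hypersurface $\Sigma_t$ for $t$ near $0$, and the classical first variation of area gives $\frac{d}{dt}\cH^{n-1}(\partial^*\Omega_t)=\int_{\Sigma_t}\div_{\Sigma_t}(V)\,d\cH^{n-1}=\int_{\Sigma_t}H\,\psi\,d\cH^{n-1}$, where the tangential part of $V$ contributes nothing by the divergence theorem on the closed manifold $\Sigma_t$. For the bulk term: the summand $\int_M\chi_{\Omega_0}h\,d\cH^n$ is independent of $t$ and hence drops out, while the transport theorem (equivalently, the coarea formula along the flow of $V$) yields $\frac{d}{dt}\int_M\chi_{\Omega_t}h\,d\cH^n=\int_{\Sigma_t}h\,\langle V,\nu\rangle\,d\cH^{n-1}=\int_{\Sigma_t}h\,\psi\,d\cH^{n-1}$. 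Here it matters that $h$ is smooth on $\mathring M$ and that $\psi$ is compactly supported in $\mathring M$, so the blow-up of $h$ near $\partial_\pm M$ is irrelevant and the boundary integral is finite. Subtracting, $\frac{d}{dt}\cA(\Omega_t)=\int_{\Sigma_t}(H-h)\,\psi\,d\cH^{n-1}$, which is the stated formula.

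Finally, if $\Omega$ is a $\mu$-bubble then $t\mapsto\cA(\Omega_t)$ attains a minimum at $t=0$ for every admissible variation, so $\frac{d}{dt}\big|_{t=0}\cA(\Omega_t)=\int_{\Sigma}(H-h)\,\psi\,d\cH^{n-1}=0$ for all $\psi\in C^\infty_c(\Sigma)$; since $H-h$ is continuous on $\Sigma$, the fundamental lemma of the calculus of variations forces $H=h$ along $\partial\Omega$.

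I do not expect a genuine obstacle: the lemma is classical for a smooth $\mu$-bubble. The only points requiring care are the bookkeeping of the sign convention for $H$ (tied to choosing $\nu$ as the outward normal of $\Omega_t$, so that the definitions $\sff_\Sigma(X,Y)=\langle D_X\nu,Y\rangle$ and $H_\Sigma=\tr_g\sff_\Sigma$ produce the right sign in the first variation of area) and the verification that every variation in play stays compactly supported in $\mathring M$ — both of which are already built into the set-up preceding the lemma, the latter being precisely why the admissible class is defined via $\Omega\Delta\Omega_0\Subset\mathring M$.
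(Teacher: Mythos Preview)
Your argument is correct and is precisely the standard derivation: differentiate the area term via the first variation of hypersurface area, differentiate the bulk term via the transport/coarea formula, subtract, and read off $H=h$ from the vanishing of the first variation at a minimizer. Note, however, that the paper does not actually prove this lemma; it is stated with a citation to \cite[Lemma 13]{soapbubble} and no proof is given in the text. Your write-up thus supplies the omitted details rather than offering an alternative route, and the sign and compact-support bookkeeping you flag are handled exactly as the paper's set-up (the outward normal convention and the class $\Omega\Delta\Omega_0\Subset\mathring M$) intends.
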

\begin{lem}\label{lem: 2nd-var}
Consider a $\mu$-bubble $\Omega$ with $\partial\Omega = \Sigma$. Assume that $\Omega_t$ is a smooth $1$-parameter family of regions with $\Omega_0 = \Omega$ and normal speed $\psi$ at $t=0$, then $\cQ(\psi):=\frac{d^2}{dt^2}\big|_{t=0}(\cA(\Omega_t))\ge 0$ where $\cQ(\psi)$ satisfies 
\begin{align*}
\cQ(\psi) 
= \int_\Sigma \left(|D_\Sigma \psi|^2  - \big(|\sff_{\Sigma}|^2 + \Ric_{M}(\nu,\nu) + \langle D_{M} h, \nu \rangle\big)\psi^2  
\right)d\cH^{n-1},
\end{align*}
where $\nu$ is the outwards pointing unit normal.
\end{lem}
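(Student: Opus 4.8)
The statement to prove is the second variation formula (Lemma~\ref{lem: 2nd-var}) for a $\mu$-bubble, together with the nonnegativity $\cQ(\psi) \ge 0$.

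\medskip

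The plan is to compute $\frac{d^2}{dt^2}\big|_{t=0}\cA(\Omega_t)$ by differentiating the first variation formula from Lemma~\ref{lem: 1st-var}, and to read off $\cQ(\psi) \ge 0$ from the fact that $\Omega$ is a minimizer of $\cA$. Since $\cA(\Omega) = \cH^{n-1}(\partial^*\Omega) - \int_M(\chi_\Omega - \chi_{\Omega_0})h\, d\cH^n$, nonnegativity of the second variation is immediate once we know $\Omega$ is a local minimizer and the variation is smooth: $t \mapsto \cA(\Omega_t)$ has a minimum at $t=0$, so its second derivative there is $\ge 0$. (One should note that $\partial\Omega$ is smooth by Proposition~\ref{prop: existence.regularity}, so there is no regularity obstruction to differentiating twice in the range $n \le 7$.) Thus the content is purely the derivation of the formula.

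\medskip

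For the formula itself, I would start from $\frac{d}{dt}\cA(\Omega_t) = \int_{\Sigma_t}(H_t - h)\psi_t\, d\cH^{n-1}$, where $\psi_t$ is the normal speed of the family at time $t$ and $H_t$ the scalar mean curvature of $\Sigma_t = \partial\Omega_t$. Differentiating again at $t=0$ and using the $\mu$-bubble equation $H = h$ on $\Sigma = \Sigma_0$ (Lemma~\ref{lem: 1st-var}), the terms involving $\frac{d}{dt}(\psi_t\, d\cH^{n-1})$ drop out because they are multiplied by $(H-h)|_{t=0} = 0$. What survives is
\[
\cQ(\psi) = \int_\Sigma \left(\frac{d}{dt}\Big|_{t=0} H_t - \langle D_M h, \nu\rangle\, \psi\right)\psi\, d\cH^{n-1},
\]
using that along the normal variation $\frac{d}{dt}\big|_{t=0}(h\circ \gamma_x(t)) = \langle D_M h, \nu\rangle\psi$. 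Now I invoke the standard linearization of the mean curvature operator under a normal variation with speed $\psi$, namely the Jacobi operator identity
\[
\frac{d}{dt}\Big|_{t=0} H_t = -\Delta_\Sigma \psi - \big(|\sff_\Sigma|^2 + \Ric_M(\nu,\nu)\big)\psi,
\]
with the sign conventions matching those fixed in the excerpt (Levi-Civita connection $D$, $\sff_\Sigma(X,Y) = \langle D_X\nu, Y\rangle$, $H = \tr_g\sff_\Sigma$, and the Riemann tensor sign given there). Substituting and integrating by parts, $\int_\Sigma (-\Delta_\Sigma\psi)\psi = \int_\Sigma |D_\Sigma\psi|^2$ since $\Sigma$ is closed, which yields exactly
\[
\cQ(\psi) = \int_\Sigma\left(|D_\Sigma\psi|^2 - \big(|\sff_\Sigma|^2 + \Ric_M(\nu,\nu) + \langle D_M h, \nu\rangle\big)\psi^2\right)d\cH^{n-1}.
\]

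\medskip

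The main obstacle—or rather, the only point requiring care—is bookkeeping with sign conventions: the paper uses a specific sign for the Riemann curvature tensor, for $\sff_\Sigma$, and for the orientation of $\nu$ (outward pointing), and one must check that the linearized mean curvature formula is applied with those conventions so that the final signs come out as stated. A clean way to handle this is to cite the first and second variation computations already established in \cite{soapbubble} (Lemma~13 there is quoted as Lemma~\ref{lem: 1st-var}, and the corresponding second-variation computation there uses the same conventions), and simply specialize their warped $\mu$-bubble second variation to warping function $u \equiv 1$, which kills all the terms involving derivatives of $u$ and recovers the displayed expression. Everything else is a routine application of minimality and the standard Jacobi operator identity.
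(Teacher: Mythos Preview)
Your proposal is correct and follows essentially the same approach as the paper: differentiate the first variation formula, use $H_\Sigma = h$ to kill the terms coming from $\frac{d}{dt}(\psi_t\, d\cH^{n-1})$, invoke the standard linearization $\frac{d}{dt}\big|_{t=0} H_t = -\Delta_\Sigma\psi - (|\sff_\Sigma|^2 + \Ric_M(\nu,\nu))\psi$ (the paper cites \cite{huisken1999geometric} for this), and integrate by parts. Your additional remarks on nonnegativity from minimality and on specializing the warped computation in \cite{soapbubble} with $u\equiv 1$ are fine supplementary comments but not needed for the argument.
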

\begin{proof}
Let $\Sigma_t : = \partial \Omega_t$. By the variation formulas for hypersurfaces (see e.g. \cite[Theorem 3.2]{huisken1999geometric}), we have
\[
\frac{\partial}{\partial t} H_{\Sigma_t} \Big|_{t=0} = -\Delta_\Sigma \psi -   \left( |\sff_{\Sigma}|^2 + \Ric_M(\nu, \nu) \right) \psi,
\] 
Differentiating the first variation and using $H_\Sigma = h$, we thus have
\begin{align*}
 \cQ(\psi) & =  \frac{\partial}{\partial t}\Big|_{t=0} \int_{\Sigma_t} (H-h)\psi  \, d\cH^{n-1} \\
 & =  \int_{\Sigma_0} \left((-\Delta_\Sigma \psi -   \left( |\sff_{\Sigma}|^2 + \Ric_M(\nu, \nu) \right) \psi - \langle D_M h, \psi \nu \rangle\right)\psi  \, d\cH^{n-1} \\
& = \int_\Sigma \left(|D_\Sigma \psi|^2  - \big(|\sff_{\Sigma}|^2 + \Ric_{M}(\nu,\nu) + \langle D_{\Sigma} h, \nu \rangle\big)\psi^2  
\right)d\cH^{n-1}.
\end{align*}
\end{proof}

Below we prove the key result, where we reduce the non-compact case to the compact case via $\mu$-bubbles.

\begin{thm}\label{thm: main construction}
Let $3 \le n \le 7$, and let $1 \le m \le n-1$. Let $M^{n}$ be a closed connected orientable manifold such that 
there exists a closed connected orientable non-separating hypersurface $\Sigma$.
Let $X$ be any $n$-manifold, and consider the connected sum $Y = M \# X$.
Suppose $Y$ admits a complete metric of positive $m$-intermediate curvature.

Then for any number $a>0$, there exists a closed connected orientable Riemannian manifold $(\tilde Y, \tilde g)$, a smooth function $h \in C^\infty(Y)$, and a closed embedded orientable hypersurface $\Lambda^{n-1} \subset \tilde{Y}$ such that
\begin{itemize}
\item $\tilde Y = M' \#_i \tilde{X}_i$, where $M'$ is a finite cyclic covering of $M$ obtained by cutting and pasting along $\Sigma$ and the $\tilde{X}_i$'s are a finite number of closed manifolds.
\item In a neighborhood of $\Lambda$, $\tilde Y$ has positive $m$-intermediate curvature.
\item $p_*[\Lambda] = [\Sigma] \in H_{n-1}(M')$, where $p: \tilde Y \to M'$ is the projection map and $[\Sigma]$ is the homology class represented by any copy of $\Sigma$ in $M'$.
\item On $\Lambda$, we have
$$H=h,$$
$$(\cC_m)_{\tilde{Y}} + ah^2 - 2|D_{\tilde{Y}} h| > 0$$
and 
$$\cQ(\psi) 
= \int_\Lambda \left(|D_\Lambda \psi|^2  - \big(|\sff_{\Lambda}|^2 + \Ric_{\tilde{Y}}(\nu,\nu) + \langle D_{\tilde{Y}} h, \nu \rangle\big)\psi^2 
\right)d\cH^{n-1} \ge 0
$$
for all $\psi \in C^\infty(\Lambda)$. 
\end{itemize}
\end{thm}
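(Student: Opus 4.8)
The plan is to exploit the non-separating hypersurface $\Sigma \subset M$ to build an infinite cyclic cover of $Y$ in which the connected-sum "debris" from $X$ is pushed off to infinity in one direction, and then run a $\mu$-bubble minimization inside a large but compact piece of this cover. First I would fix the complete metric of positive $m$-intermediate curvature on $Y = M \# X$, note that the connected sum is taken in a ball disjoint from (a copy of) $\Sigma$, and form the infinite cyclic cover $\hat Y$ via the $d = \infty$ case of the $d$-cyclic cover construction, cutting and pasting along the lift of $\Sigma$. Concretely, $\hat Y = \bigcup_{k \in \Z} \tilde Y_k / \sim$, where each $\tilde Y_k$ is a copy of $Y$ cut along $\Sigma$; the deck transformation shifts the index $k \mapsto k+1$, and the pulled-back metric is complete and still has positive $m$-intermediate curvature. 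The two boundary copies $\Sigma_-^{(k)}, \Sigma_+^{(k)}$ in the cut picture give a bi-infinite sequence of disjoint hypersurfaces $\{\Sigma_k\}_{k \in \Z}$ in $\hat Y$, each projecting to $[\Sigma]$.

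Next I would set up the $\mu$-bubble problem on a compact region of $\hat Y$. Choose $N$ large and let $W_N \subset \hat Y$ be the compact manifold-with-boundary obtained as $\tilde Y_{-N} \cup \dots \cup \tilde Y_{N}$ together with a collar; its boundary has two components, one near $\Sigma_{-N}$ and one near $\Sigma_{N+1}$, which I label $\partial_- W_N$ and $\partial_+ W_N$. On $\mathring W_N$ I would prescribe a function $h$ with $h \to \pm\infty$ on $\partial_\pm W_N$, and — crucially, to make the final inequality $(\cC_m)_{\hat Y} + ah^2 - 2|Dh| > 0$ hold — I would choose $h$ of the form $h = \varphi \circ \rho$ for a suitable real function $\varphi$ of a "signed distance to the middle slab" coordinate $\rho$, arranging that $|h'|$ is small compared to $h^2$ and to the (uniform, positive) lower bound on $(\cC_m)_{\hat Y}$ away from the ends; this is the standard trick (cf. \cite{gromov1996positive,soapbubble}) of taking $\varphi$ close to a tangent-type profile with controlled derivative, so that $\tfrac{2}{n-m}h^2 - 2|Dh|$ (or the analogous quadratic appearing in the $m$-intermediate curvature identity of \cite{brendle2022generalization}) dominates. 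Proposition~\ref{prop: existence.regularity} then yields a smooth $\mu$-bubble $\Omega$ with $\partial\Omega = \Lambda$ compactly contained in $\mathring W_N$; Lemma~\ref{lem: 1st-var} gives $H = h$ on $\Lambda$, and Lemma~\ref{lem: 2nd-var} gives the stated stability inequality $\cQ(\psi) \ge 0$ for all $\psi \in C^\infty(\Lambda)$.

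It remains to identify $\Lambda$ homologically and to repackage $W_N$ as a connected sum over a finite cyclic cover of $M$. For the homology statement: since $\partial_+ W_N \subset \Omega_0$ and $\Omega \Delta \Omega_0 \Subset \mathring W_N$, the bubble $\Omega$ separates $\partial_- W_N$ from $\partial_+ W_N$ in $W_N$, so $[\Lambda]$ equals the class of the separating slice, which under the projection $W_N \to \hat Y \to M$ maps to $[\Sigma]$; to get a \emph{closed} ambient manifold $\tilde Y$ rather than $W_N$, I would cap off $W_N$ by regluing its two boundary copies of $\Sigma$ to each other (equivalently, pass to the finite $d$-cyclic cover $M'$ of $M$ with $d = 2N+1$ and observe $W_N$ is $M'$ with two balls removed, the connect-sum regions, refilled by pieces coming from $X$), giving $\tilde Y = M' \#_i \tilde X_i$ with finitely many closed summands $\tilde X_i$; since $\Lambda$ lies in the interior well away from the gluing locus, all of $H=h$, the curvature inequality, and the stability inequality persist verbatim on $\tilde Y$, and $p_*[\Lambda] = [\Sigma] \in H_{n-1}(M')$.

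The main obstacle I anticipate is the second bullet together with the third displayed inequality: one must choose the single function $h$ (depending on $a$) so that \emph{simultaneously} (i) $h \to \pm\infty$ at the two ends so the $\mu$-bubble exists and stays compactly inside, and (ii) the pointwise bound $(\cC_m)_{\tilde Y} + a h^2 - 2|D_{\tilde Y} h| > 0$ holds on the (a priori unknown, but compactly contained) location of $\Lambda$. The resolution is that $(\cC_m)_{\tilde Y} \ge c_0 > 0$ uniformly on the fixed compact region containing all candidate bubbles, so it suffices to make $2|Dh|$ smaller than $c_0$ there while still letting $h$ blow up near the ends — achievable by taking the transition of $\varphi$ to happen in a long interval and by exploiting that the $a h^2$ term only helps; the cover construction is what buys the "long interval," since we may take $N$ as large as we like. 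A secondary technical point is checking that cutting-and-pasting along $\Sigma$ genuinely produces a smooth closed $\tilde Y$ of the asserted connected-sum form and that the metric extends smoothly across the regluing, which follows from doing the connected sum and the cut in disjoint coordinate balls.
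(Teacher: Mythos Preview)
Your overall architecture---pass to the infinite cyclic cover, run a $\mu$-bubble, then close up to a finite cyclic cover---matches the paper. But there is a genuine gap in how you handle the $X$ summand.

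You assert that $W_N = \tilde Y_{-N} \cup \dots \cup \tilde Y_N$ is compact, and later that $(\cC_m)_{\tilde Y} \ge c_0 > 0$ uniformly on the region containing all candidate bubbles. Both of these fail when $X$ is noncompact, which is precisely the case the theorem is designed for (``arbitrary ends''). Each fundamental domain $\tilde Y_k$ of the cyclic cover contains an entire copy of $X$, so $W_N$ still has $2N+1$ noncompact ends, and on those ends you only know $\cC_m > 0$, with no uniform lower bound. Consequently your tangent-type profile $h = \varphi\circ\rho$, chosen so that $2|Dh|$ is dominated by a fixed $c_0$, does not give $(\cC_m) + ah^2 - 2|Dh| > 0$ on the $X$ ends, nor does it force $\{|h|<\infty\}$ to be bounded there, so neither the existence of the $\mu$-bubble nor the curvature inequality on $\Lambda$ is secured.

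The paper's fix is a genuinely two-regime construction of $h$. After rescaling so that $\cC_m > 1$ on the compact $M' = M \setminus B(p)$ part, one takes a smoothed signed distance $\rho_1$ to $\Sigma_0$ that is \emph{constant} ($= A_k$) near each attaching sphere $\partial X_k'$. On $\hat M'$ one uses the tangent profile $h = -\tfrac{1}{\sqrt a}\tan(L^{-1}\rho_1)$, where the identity $1+\tan^2 = \sec^2$ exactly matches the available bound $\cC_m > 1$. On each $X_k'$ one switches to a rational profile
\[
h(x) \;=\; \frac{2L}{\sqrt a\,\bigl(\rho_1(x) - A_k - \tfrac{2L}{\tan(L^{-1}A_k)}\bigr)},
\]
which (i) agrees with the tangent profile at $\partial X_k'$, (ii) blows up at finite $\rho_1$, so $\{|h|<\infty\}\cap X_k'$ is bounded, and (iii) satisfies $ah^2 - 2|Dh| > 0$ on its own, so no lower bound on $\cC_m$ is needed there. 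Only finitely many $X_k'$ meet $\{|h|<\infty\}$, so the $\mu$-bubble problem lives in a bounded set; after finding $\Lambda$, one encloses it in a compact $Y'$ whose boundary components inside the $X_k'$ are capped off (this is where the closed $\tilde X_i$'s come from) and whose two $\Sigma$-boundaries are reglued. Your proposal should incorporate this split construction of $h$; without it the argument does not close for noncompact $X$.
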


\begin{proof}
We follow the approach of \cite[Section 6 and 7]{soapbubble}. Namely, we pass to an appropriate covering space of $Y$, construct a weight function $h$, and apply the $\mu$-bubble technique. The main difference in our case is how to find the covering space and how to modify the construction of the weight function $h$. An illustration of the construction is in Figure \ref{fig: construction}.

Let $Y = M \# X$ be as in the assumption. By taking the orientation double cover of $X$ we can assume $X$ is orientable. Let $p \in M$ be a point such that $B(p) \cap \Sigma = \emptyset$, where $B(p)$ is a small $n$-ball around $p$. Let $M' = M \setminus B(p)$ and $X' = X\setminus B$, where $B$ is a small $n$-ball in $X$.
Then we can take $Y = M \# X= M' \cup X'$, where $M'$ and $X'$ are glued on the boundary sphere.

Suppose $Y$ is endowed with a complete metric of positive $m$-intermediate curvature. By scaling and compactness of $M'$ we can assume $\cC_m >1$ on $M'$.
Let $a>0$ be any number.

\begin{figure}
\includegraphics[width=0.7\textwidth]{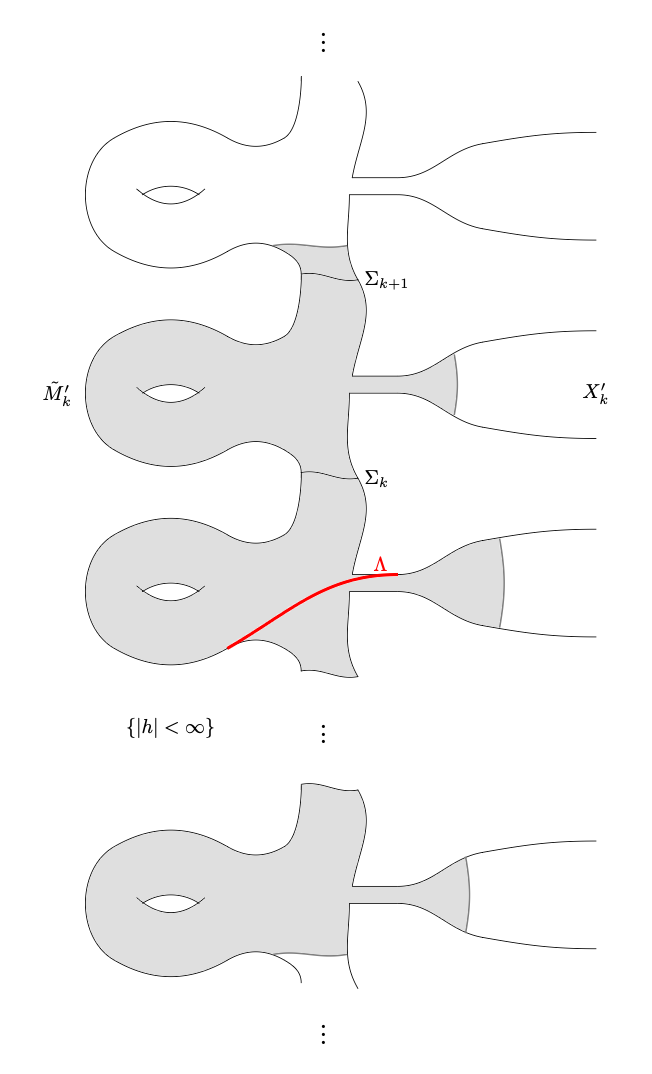}
\caption{Illustration of the construction in the proof of Theorem \ref{thm: main construction}. The manifold is $\hat{Y} = \hat{M} \#_\Z X$, the infinite cyclic cover of $M \# X$ constructed in Step 1. The shaded region is the bounded set $\{|h|< \infty\}$ obtained in Step 2. The red hypersurface is $\Lambda$ obtained in Step 3. }
\label{fig: construction}
\end{figure}

\textbf{Step 1: pass to an infinite cyclic cover by cutting and pasting $M$ along $\Sigma$.}
Cut $M$ along $\Sigma$. Let $\tilde{M} = M \setminus \Sigma$ and $\tilde{M}' = M' \setminus \Sigma = (M\setminus B(p)) \setminus \Sigma$. Then $\tilde{M}$ is a connected manifold with boundary, and  $\partial \tilde{M}$ has two components, both diffeomorphic to $\Sigma$. Denote $\partial \tilde{M} = \Sigma_- \cup \Sigma_+$. Let $\tilde{M}_k$, $k \in \Z$ be $\Z$ copies of $\tilde{M}$, and let $\tilde{M}'_k$ be the corresponding $\tilde{M}' \subset \tilde{M}$. Glue together $\tilde{M}_k$ along the boundary by gluing the $\Sigma_+$ boundary component of $\tilde{M}_k$ with the $\Sigma_-$ boundary component of $\tilde{M}_{k+1}$. Denote the resulting manifolds by $$\hat{M} = \cup_{k \in \Z} \tilde{M}_k/ \sim,$$
$$\hat{M}' = \cup_{k \in \Z} \tilde{M}'_k/ \sim,$$
where the equivalence relation $\sim$ is the gluing we just described. Then $\hat{M}$ is an infinite cyclic covering of $M$. 
Denote the closed hypersurface in $\hat{M}$ coming from the $\Sigma_-$ boundary component of $\tilde{M}_{k}$ (equivalently, the $\Sigma_+$ boundary component of $\tilde{M}_{k-1}$) by $\Sigma_k$. Orient $\Sigma_k$ so that its normal is pointing towards $\tilde{M}_{k}$.

Let $X_k'$, $k \in \Z$ be $\Z$ copies of $X'$. Then we have
$$\hat{Y} = \hat{M} \#_\Z X = \hat{M}' \cup (\cup_{k \in \Z} X_k'),$$
where $\hat{M}'$ and each $X_k'$ are glued on the boundary spheres. The manifold $Y$ is an infinite cyclic cover of $M \# X$.

We endow $\hat{Y}$ with the pullback Riemannian metric such that $\hat{Y} \to Y$ is a Riemannian covering map. Then by our assumption that $\cC_m >1$ on $M'$, we also have $\cC_m >1$ on $\hat{M}'$.

\textbf{Step 2: construct the weight function $h$.}

We now define $\rho_0 : \hat{Y} \to \R$ as the signed distance function to the hypersurface $\Sigma_0$. Then $\rho_0$ is Lipschitz. We then take $\rho_1$ to be a smoothing of $\rho_0$ such that for each $k$, $\rho_1 \equiv A_k$ for some constant $A_k$ in a small neighborhood of $\partial X_k'$ (i.e., where $\hat{M}'$ and $X_k'$ are glued together), and $A_k > 0$ if $k \ge 0$ and $A_k < 0$ if $k < 0$. We can further assume that $\rho_1 \ge A_k$ on $X_k'$ if $k \ge 0$ and $\rho_1 \le A_k$ on $X_k'$ if $k < 0$. 

Then there is $L>0$ so that 
\[|\Lip(\rho_1)|_g < \frac{\sqrt{a}}{2}L.\]
%We may take $L$ larger if necessary to assume that $\tfrac{\pi L}{2} = J + \tfrac 34$ for some $J \in \N$. 

We now define a function $h\in C(\hat{Y},[-\infty,\infty])$ as follows. On $\hat{M}' \cap \{ -\tfrac{\pi L}{2} \leq \rho_1 \leq \tfrac{\pi L}{2} \}$, we define 
\[
h(p) = - \frac{1}{\sqrt{a}}\tan(\tfrac{1}{L} \rho_1(p)). 
\]
On the rest of $\hat{M}'$ we set $h = \pm \infty$ such that it is continuous to $[-\infty,\infty]$. We then define $h$ on $X_k'$. When $A_k \ge \tfrac{\pi L}{2}$, set $h=-\infty$ on $X_k'$. When $A_k \le -\tfrac{\pi L}{2}$, set $h=\infty$ on $X_k'$. Now assume $|A_k| \leq \tfrac{\pi L}{2}$.

For $k \ge 0$ and
\[
x \in X_k' \cap \left\{\rho_1 < A_k + \frac{2L}{\tan(L^{-1}A_k)}\right\},
\]

or for $k < 0$ and
\[
x \in X_k' \cap \left\{\rho_1 > A_k + \frac{2L}{\tan(L^{-1}A_k)}\right\},
\]
 we set
\[
h(x) =  \frac{2L}{\sqrt{a}(\rho_1(x) - A_k -  \frac{2L}{\tan ( L^{-1}A_k)})} .
\]
Otherwise we set $h(p)=\pm\infty$ such that $h$ is continuous. Observe that by definition, $h$ is finite on only finitely many $X_k'$.
 
Notice that for $x \in \partial X_k'$, we have that
\[
h(x) = - \frac{1}{\sqrt{a}}\tan (L^{-1}A_k) = - \frac{1}{\sqrt{a}}\tan(L^{-1}\rho_1(x)),
\]
and thus $h$ is Lipschitz across $\partial X_k'$. If $0 \le A_k < \tfrac{\pi L}{2}$, $x\in X_k'$ and 
\[
\rho_1(x) \nearrow A_k + \frac{2L}{\tan(L^{-1}A_k)},
\]
we have that $h(x) \to -\infty$. 
Similarly, if $-\tfrac{\pi L}{2} < A_k < 0$, $x\in X_k'$ and 
\[
\rho_1(x) \searrow A_k + \frac{2L}{\tan(L^{-1}A_k)},
\]
we have that $h(x) \to \infty$. 
Thus $h$ is continuous on $X_k'$.

Note that the set $\{|h| < \infty\}$ is bounded. This is because this region is bounded in $\hat{M}'$, only finitely many ends $X_k'$ are included in this set, and in each $X_k'$, the region where $\{|h|<\infty\}$ is bounded. 

Similar to \cite{soapbubble}, we have
\begin{lem} We can smooth $h$ slightly to find a function $h\in C^\infty(\hat{Y})$ satisfying
\begin{equation}\label{eq:mu-bubble-ineq-h-psc}
(\cC_m)_{\hat{Y}} + ah^2 - 2|D_{\hat{Y}} h| > 0
\end{equation}
on $\{|h|<\infty\}$. 
\end{lem}
\begin{proof}

The function $h$ constructed above is smooth away from $\partial X_k'$ (and Lipschitz there). Since each $\partial X_k'$ is compact and only a finite number of them are contained in $\{|h|<\infty\}$, if we prove \eqref{eq:mu-bubble-ineq-h-psc} for function $h$ considered above, then we can easily find a smooth function satisfying \eqref{eq:mu-bubble-ineq-h-psc}.

Recall $|\nabla(\rho_1)| < \frac{\sqrt{a}}{2}L$. We first check \eqref{eq:mu-bubble-ineq-h-psc} on $\hat{M}'$. There, $\cC_m > 1$. As such, we have that
\[
\cC_m + ah^2 - 2|D_{\hat{Y}} h| > 1 + \tan^2(L^{-1}\rho_1(p)) - \cos^{-2}(L^{-1}\rho_1(p)) = 0. 
\]
On the other hand, on $X_k'$ (we assume that $k\geq 0$ as the $k<0$ case is similar), we only know that $\cC_m > 0$. Nevertheless, we compute
\begin{align*}
&  \cC_m + ah^2 - 2|D_{\hat{Y}} h| \\
 > & \, 0 + \frac{4L^2}{\left( \rho_1(p) - A_k -  \frac{2L}{\tan ( L^{-1}A_k)}\right)^2} - \frac{L^2}{\left( \rho_1(p) - A_k -  \frac{2L}{\tan ( L^{-1}A_k)}\right)^2} \\
> & \, 0. 
\end{align*}

This completes the proof. 
\end{proof}

\textbf{Step 3: apply the $\mu$-bubble technique.}

We consider $\mu$-bubbles with respect to the smooth function $h$ we have just defined. We fix 
\[
\Omega_0 : =(\cup_{k<0} \tilde{M}'_k) \cup (\cup_{k<0}X_k').
\]
%\[\Omega_0 : =\left((\cup_{k<0} \tilde{M}'_k) \cup (\cup_{k<0}X_k')\right)\cap \{|h|<\infty\}. \]
We can minimize $$\cA(\Omega)=\cH^{n-1}(\partial^* \Omega) - \int_M (\chi_\Omega-\chi_{\Omega_0})h \, d\cH^n$$ among all Cacioppoli sets $\Omega$ such that $\Omega\Delta \Omega_0$ is compactly contained in $\{|h|<\infty\}$ by Proposition~\ref{prop: existence.regularity}. Denote by $\Omega$ the connected component of the minimizer containing $\{\rho_1=-\frac{\pi L}{2}\}$. Since $n\le 7$, each component of $\partial\Omega$ is compact and regular. By the first variation formula from Lemma~\ref{lem: 1st-var} and the stability inequality for $\cA$ from Lemma~\ref{lem: 2nd-var}, we see that $\Lambda = \partial \Omega$ satisfies 
$H=h$ and 
\begin{equation}\label{eq:def-to-psc-12}
\cQ(\psi) 
= \int_\Lambda \left(|D_\Lambda \psi|^2  - \big(|\sff_{\Lambda}|^2 + \Ric_{\hat{Y}}(\nu,\nu) + \langle D_{\hat{Y}} h, \nu \rangle\big)\psi^2  
\right)d\cH^{n-1} \ge 0
\end{equation}
for all $\psi \in C^\infty(\Lambda)$. 

We can find a compact region $Y' \subset Y$ with smooth boundary so that $\partial\Omega \subset Y'$. Furthermore, we can arrange that $\partial Y' \cap \hat M = \Sigma_{I} \cup \Sigma_{-I}$, for some large $I \in \N$. Note that the other boundary components of $Y'$ thus lie completely in some $X_k'$. 

In particular, 
$\partial Y' \setminus \hat M$ bounds some compact manifold with boundary. Cap these components off and then glue the hypersurfaces $\Sigma_I$ and $\Sigma_{-I}$ to each other. We thus obtain a manifold $\tilde Y$ diffeomorphic to $M' \#_i \tilde X_i$, where $M'$ is a $2I$-cyclic covering of $M$ obtained by cutting and pasting along $\Sigma$, and each $\tilde X_i$ is closed and we have finitely many of them. We also have a hypersurface $\Lambda^{n-1}\subset \tilde Y$ homologous to $[\Sigma^{n-1}\times \{*\}]\in H_{n-1}(\tilde Y)$ that satisfies $H=h$ and \eqref{eq:def-to-psc-12}. We can make $h$ to be a smooth function on $\tilde{Y}$ that agrees with our old $h$ in a neighborhood of $\Lambda$, so that (\ref{eq:mu-bubble-ineq-h-psc}) is satisfied. We can also construct a metric on $\tilde Y$ such that it is isometric to the original metric on $Y$ in a neighborhood of $\Lambda$. Since $Y$ has positive $m$-intermediate curvature, this means 
$\tilde Y$ has positive $m$-intermediate curvature in a neighborhood of $\Lambda$. This also means that on $\Lambda$, we have 
$$H_\Lambda = h,$$
$$(\cC_m)_{\tilde{Y}} + ah^2 - 2|D_{\tilde{Y}} h| > 0,$$
and
$$\cQ(\psi) 
= \int_\Lambda \left(|D_\Lambda \psi|^2  - \big(|\sff_{\Lambda}|^2 + \Ric_{\tilde{Y}}(\nu,\nu) + \langle D_{\tilde{Y}} h, \nu \rangle\big)\psi^2 
\right)d\cH^{n-1} \ge 0
$$
for all $\psi \in C^\infty(\Lambda)$. 
\end{proof}

\section{Proof of Theorem~\ref{thm: SYS}}

We begin this section by proving some simple facts about SYS manifolds. We first give an equivalent definition of an SYS manifold. This is the definition given in e.g. \cite{gromov2019fourlectures} and \cite{lesourd2020positive}.

\begin{lem}\label{lem: SYS, equiv defn}
Let $M^n$ be an orientable closed manifold. Then $M$ being an SYS manifold is equivalent to the following condition: There exists a smooth map $F : M \to \T^{n-2}$, such that the homology class of the pullback of a regular value, $[F^{-1}(t)] \in H_2(M)$,
is non-spherical.
\end{lem}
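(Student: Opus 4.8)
The plan is to convert a system of one-dimensional cohomology classes into a map to a torus and back, then identify the SYS homology class with the preimage of a regular value of that map.

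First I would use that $S^1$ is a $K(\Z,1)$, so that $H^1(M;\Z)\cong[M,S^1]$ and every class admits a smooth representative. Given an SYS structure with classes $\beta_1,\dots,\beta_{n-2}\in H^1(M;\Z)$, choose smooth maps $f_i\colon M\to S^1$ with $f_i^{*}\sigma=\beta_i$, where $\sigma$ generates $H^1(S^1;\Z)$, and set $F:=(f_1,\dots,f_{n-2})\colon M\to\T^{n-2}$. Writing $\pi_i\colon\T^{n-2}\to S^1$ for the $i$-th projection and $\alpha_i:=\pi_i^{*}\sigma$ for the standard generators of $H^1(\T^{n-2};\Z)$, the identity $\pi_i\circ F=f_i$ gives $F^{*}\alpha_i=\beta_i$. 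Conversely, from any smooth $F\colon M\to\T^{n-2}$ one defines $\beta_i:=F^{*}\alpha_i$. Either way, $\beta_1\smile\cdots\smile\beta_{n-2}=F^{*}(\alpha_1\smile\cdots\smile\alpha_{n-2})$.

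Next I would compute $[F^{-1}(t)]$ via naturality of Poincaré duality. By Sard's theorem pick a regular value $t$ of $F$; then $F^{-1}(t)\subset M$ is a closed embedded submanifold of dimension $n-(n-2)=2$, and different regular values yield cobordant, hence homologous, preimages, so $[F^{-1}(t)]\in H_2(M;\Z)$ is well defined. The Poincaré dual of $[F^{-1}(t)]$ in $H^{n-2}(M;\Z)$ equals $F^{*}$ of the Poincaré dual of the point $t$, which is the generator of $H^{n-2}(\T^{n-2};\Z)\cong\Z$. Since the cohomology ring of the torus is the exterior algebra on $H^1$, that generator is $\alpha_1\smile\cdots\smile\alpha_{n-2}$; hence the Poincaré dual of $[F^{-1}(t)]$ is $\beta_1\smile\cdots\smile\beta_{n-2}$, and capping with $[M]$ gives $[F^{-1}(t)]=[M]\frown(\beta_1\smile\cdots\smile\beta_{n-2})$ in $H_2(M;\Z)$.

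With this identity the two conditions are visibly equivalent: an SYS structure produces a smooth $F$ whose regular-value preimage class is non-spherical, and conversely such an $F$ produces classes $\beta_i=F^{*}\alpha_i$ realizing the SYS condition; moreover non-sphericality of the class forces it, and therefore each $\beta_i$, to be nonzero (a cup product vanishes if any factor does, and capping with $[M]$ is an isomorphism), so one genuinely recovers an SYS structure. I do not expect a serious obstacle; the only point needing care is the naturality statement that the Poincaré dual of the preimage of a regular value is the pullback of the Poincaré dual of that point --- standard, e.g.\ via the Thom class of the normal bundle or transverse intersection theory --- together with the elementary identification of the top class of $\T^{n-2}$ with $\alpha_1\smile\cdots\smile\alpha_{n-2}$.
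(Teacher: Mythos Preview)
Your proposal is correct and follows essentially the same approach as the paper: use the $K(\Z,1)$ identification $H^1(M;\Z)\cong[M,S^1]$ to pass between cohomology classes $\beta_i$ and circle-valued maps $f_i$, then identify $[F^{-1}(t)]$ with $[M]\frown(\beta_1\smile\cdots\smile\beta_{n-2})$ via Poincar\'e duality. The only cosmetic difference is that the paper phrases the key identity as ``cup product is Poincar\'e dual to intersection'' (writing $F^{-1}(t)=\bigcap_i F_i^{-1}(t_i)$), whereas you invoke naturality of Poincar\'e duality for the whole map $F$ at once; your added remark that non-sphericality forces each $\beta_i\neq 0$ is a nice touch the paper omits.
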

\begin{proof}
Since the space $S^1$ is a $K(\Z,1)$, we have $[M, S^1] = H^1(M; \Z)$, and the bijection is given by $f \mapsto (f_* : H_1(M) \to H_1(S^1) \cong \Z)$. Thus for any $\beta \in H^1(M;\Z)$ we can get a smooth map $f: M \to S^1$ and vice versa. Further, the preimage of any regular value of $f$ represents the Poincar\'e dual of $\beta$. Thus given $\beta_1,\dots \beta_{n-2} \in H^1(M;\Z)$ we can get a smooth map $F=(F_1,\dots,F_{n-2}): M \to \T^{n-2}$ and vice versa. Since the cup product is the Poincar\'e dual to intersection,  we have 
$$[M] \frown (\beta_1 \smile \beta_2 \smile \dots \smile \beta_{n-2}) = [F_1^{-1}(t_1) \cap \dots \cap F_{n-2}^{-1}(t_{n-2})] = [F^{-1}(t)],$$
where $t=(t_1,\dots, t_{n-2})$ is any regular value of $F$. Then the assertion follows.

\end{proof}
In \cite[Section 5]{gromov2018metric}, Gromov gave some examples of SYS manifolds. For example, we can directly verify that if a closed orientable $n$-manifold admits a map to $\T^n$ of non-zero degree, then it is SYS. Here we establish some simple ways to obtain new SYS manifolds from an old one.

\begin{lem}[{\cite[Section 5, Example 3]{gromov2018metric}}]\label{lem: SYS, degree 1}
Let $M^n$ be an SYS manifold and let $\hat{M}$ be a closed orientable $n$-manifold such that there exists a map 
$f:\hat{M} \to M$ of degree 1. Then $\hat{M}$ is also an SYS manifold.
\end{lem}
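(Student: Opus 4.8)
The plan is to use the cohomological characterization of SYS manifolds directly: pull back the degree-one witnessing classes along $f$ and exploit that a degree-one map sends the fundamental class to the fundamental class. Let $\beta_1,\dots,\beta_{n-2}\in H^1(M;\Z)$ be classes witnessing that $M$ is SYS, so that $\alpha:=[M]\frown(\beta_1\smile\cdots\smile\beta_{n-2})\in H_2(M;\Z)$ is non-spherical. I would then set $\hat\beta_i:=f^*\beta_i\in H^1(\hat M;\Z)$ and $\hat\gamma:=\hat\beta_1\smile\cdots\smile\hat\beta_{n-2}=f^*(\beta_1\smile\cdots\smile\beta_{n-2})$, and claim these exhibit $\hat M$ as SYS.

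The key computation is the projection formula for the cap product, $f_*(f^*a\frown b)=a\frown f_*b$, combined with the fact that $\deg f=1$ forces $f_*[\hat M]=[M]$:
\[
f_*\big([\hat M]\frown\hat\gamma\big)=f_*\big([\hat M]\frown f^*(\beta_1\smile\cdots\smile\beta_{n-2})\big)=(f_*[\hat M])\frown(\beta_1\smile\cdots\smile\beta_{n-2})=[M]\frown(\beta_1\smile\cdots\smile\beta_{n-2})=\alpha.
\]
Next I would invoke naturality of the Hurewicz homomorphism: the square relating $\pi_2(\hat M)\to H_2(\hat M)$ and $\pi_2(M)\to H_2(M)$ via $f_\#$ and $f_*$ commutes, so $f_*$ carries spherical classes to spherical classes. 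Hence if $[\hat M]\frown\hat\gamma$ were spherical, then $\alpha=f_*([\hat M]\frown\hat\gamma)$ would be spherical, contradicting the choice of the $\beta_i$. Therefore $[\hat M]\frown\hat\gamma\in H_2(\hat M;\Z)$ is non-spherical. In particular it is nonzero (the zero class is spherical), so $\hat\gamma\neq 0$ and hence each $\hat\beta_i\neq 0$; this is exactly the data required to conclude that $\hat M$ is an SYS manifold.

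I do not expect any serious obstacle here. The only points requiring care are citing the projection formula in the correct direction, $f_*(f^*a\frown b)=a\frown f_*b$, and making sure the degree hypothesis is used precisely where needed, namely to obtain $f_*[\hat M]=[M]$ on the nose rather than some multiple of it (a nonzero multiple would still give non-sphericality, but the statement as phrased assumes degree exactly $1$, which is the cleanest case). Orientability and closedness of both manifolds, needed for the fundamental classes and the degree, are part of the hypotheses, and connectedness is implicit in the SYS setup.
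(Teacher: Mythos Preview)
Your argument is correct and is essentially the same as the paper's: both pull back the witnessing classes along $f$, use naturality of cup and cap (the projection formula) together with $f_*[\hat M]=[M]$ to show $f_*\big([\hat M]\frown f^*(\beta_1\smile\cdots\smile\beta_{n-2})\big)=[M]\frown(\beta_1\smile\cdots\smile\beta_{n-2})$, and conclude by the observation that $f_*$ preserves sphericality. Your extra remark that non-sphericality forces $\hat\gamma\neq 0$ and hence each $\hat\beta_i\neq 0$ is a nice point of care that the paper leaves implicit.
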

\begin{proof}
Since $M$ is an SYS manifold, 
there are nonzero cohomology
classes $\beta_1, \dots, \beta_{n-2}$ in $H^1(M;\Z)$ such that the homology class
$[M] \frown (\beta_1  \smile \dots \smile \beta_{n-2}) \in H_2(M;\Z)$ is non-spherical. 

Then we get pullbacks $f^*\beta_1, \dots, f^*\beta_{n-2}$ in $H^1(\hat M;\Z)$. Claim the class $f^*\beta_1 \smile \dots \smile f^*\beta_{n-2}  \in H_2(\hat{M};\Z)$ is non-spherical.
 Suppose not. Then there exists a map $\phi:S^2 \to \hat M$ such that 
$$[\hat{M}] \frown (f^*\beta_1 \smile \dots \smile f^*\beta_{n-2}) = [\phi(S^2)].$$

By naturality of the cup product and the cap product, and using the fact that $f$ is of degree 1, we have
\begin{align*}
[(f\phi)_*(S^2)]  & = f_*[\phi(S^2)] \\
& = f_*\Big([\hat M] \frown (f^*\beta_1 \smile \dots \smile f^*\beta_{n-2})\Big) \\
& = f_*\Big([\hat M] \frown f^*(\beta_1 \smile \dots \smile \beta_{n-2})\Big) \\
& = f_*[\hat M] \frown (\beta_1 \smile \dots \smile \beta_{n-2}) \\
& = [M] \frown (\beta_1 \smile \dots \smile \beta_{n-2}),
\end{align*}
which means the class $[M] \frown (\beta_1 \smile \dots \smile \beta_{n-2})$ is spherical, contradicting our assumptions. This contradiction shows that $[\hat M] \frown (f^*\beta_1 \smile \dots \smile f^*\beta_{n-2}) \in H_2(\hat M;\Z)$ is also non-spherical. Thus $\hat M$ is an SYS manifold as desired.
\end{proof}

Unlike the case in the previous lemma, if a closed orientable manifold $X$ admits a map $f$ of degree $d>1$ to an SYS manifold, then $X$ is not necessarily SYS \cite[Section 5, Example 3]{gromov2018metric}. What we have instead is the following.

\begin{lem} \label{lem: SYS, cyclic cover}
Suppose $M^n$ is a connected SYS manifold with $\beta_1, \beta_2, \dots, \beta_{n-2}$ in $H^1(M;\Z)$ such that $[M] \frown (\beta_1 \smile \beta_2 \smile \dots \smile \beta_{n-2}) \in H_2(M;\Z)$ is non-spherical and such that the Poincar\'e dual of $\beta_1$ is represented by a closed connected embedded orientable hypersurface $\Sigma$. Let $\hat{M}$ be the $d$-cyclic cover of $M$ obtained by cutting and pasting along $\Sigma$. Then $\hat{M}$ is also an SYS manifold.
\end{lem}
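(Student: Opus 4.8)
The plan is to transport the SYS structure down along the covering map $\pi\colon\hat M\to M$. Since $\beta_1\neq 0$, its Poincar\'e dual $[\Sigma]\in H_{n-1}(M;\Z)$ is nonzero, so $\Sigma$ is non-separating (a hypersurface is separating precisely when its class in $H_{n-1}$ vanishes); the pieces $\tilde M_k$ in the construction are connected and glued into a cyclic chain, so $\hat M$ is connected, and it is closed and orientable because $M$ is (here $d$ is finite, as otherwise $\hat M$ would not be closed). For $2\le i\le n-2$ put $\hat\beta_i:=\pi^*\beta_i\in H^1(\hat M;\Z)$; for $i=1$ we must replace $\pi^*\beta_1$ by a suitable ``$d$-th root.''

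\textbf{The class $\hat\beta_1$.} Let $\Sigma_0,\dots,\Sigma_{d-1}$ be the $d$ components of $\pi^{-1}(\Sigma)$; these are precisely the gluing hypersurfaces appearing in the construction of $\hat M$, each carried diffeomorphically onto $\Sigma$ by $\pi$. Set $\hat\beta_1:=\operatorname{PD}_{\hat M}[\Sigma_0]\in H^1(\hat M;\Z)$. Then: (i) $[\Sigma_0]=\cdots=[\Sigma_{d-1}]$ in $H_{n-1}(\hat M;\Z)$, since consecutive lifts $\Sigma_k,\Sigma_{k+1}$ cobound one copy of $M$ cut along $\Sigma$, and hence $[\pi^{-1}(\Sigma)]=d\,[\Sigma_0]$; (ii) $\pi^*\beta_1=\operatorname{PD}_{\hat M}[\pi^{-1}(\Sigma)]$, because for any loop $\hat\gamma$ in $\hat M$ the signed intersection number of $\hat\gamma$ with $\pi^{-1}(\Sigma)$ equals that of $\pi\circ\hat\gamma$ with $\Sigma$, which is $\langle\beta_1,\pi_*[\hat\gamma]\rangle=\langle\pi^*\beta_1,[\hat\gamma]\rangle$, and a class in $H^1(\hat M;\Z)$ is determined by its pairings against $H_1(\hat M;\Z)$ (universal coefficients). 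Combining (i) and (ii) gives $\pi^*\beta_1=d\,\hat\beta_1$.

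\textbf{Non-sphericality.} I claim $[\hat M]\frown(\hat\beta_1\smile\cdots\smile\hat\beta_{n-2})\in H_2(\hat M;\Z)$ is non-spherical, which is all that is needed. Write $\gamma:=\beta_2\smile\cdots\smile\beta_{n-2}$, so $\hat\beta_2\smile\cdots\smile\hat\beta_{n-2}=\pi^*\gamma$. Using the identity $x\frown(a\smile b)=(x\frown a)\frown b$, the projection formula $\pi_*(z\frown\pi^*c)=(\pi_*z)\frown c$, and $\pi_*[\hat M]=d\,[M]$, we compute
\[
\pi_*\big([\hat M]\frown(\hat\beta_1\smile\pi^*\gamma)\big)=\big(\pi_*([\hat M]\frown\hat\beta_1)\big)\frown\gamma,
\]
and separately $d\cdot\pi_*([\hat M]\frown\hat\beta_1)=\pi_*([\hat M]\frown d\hat\beta_1)=\pi_*([\hat M]\frown\pi^*\beta_1)=\pi_*[\hat M]\frown\beta_1=d\,([M]\frown\beta_1)$. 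Since $H_{n-1}(M;\Z)\cong H^1(M;\Z)$ is torsion-free, we may cancel $d$ to obtain $\pi_*([\hat M]\frown\hat\beta_1)=[M]\frown\beta_1$, and therefore $\pi_*\big([\hat M]\frown(\hat\beta_1\smile\cdots\smile\hat\beta_{n-2})\big)=[M]\frown(\beta_1\smile\cdots\smile\beta_{n-2})$. If the class on the left were $\phi_*[S^2]$ for some $\phi\colon S^2\to\hat M$, then $[M]\frown(\beta_1\smile\cdots\smile\beta_{n-2})=(\pi\circ\phi)_*[S^2]$ would be spherical, contradicting the hypothesis on $M$. In particular each $\hat\beta_i\neq 0$ (otherwise the capped class vanishes and is trivially spherical), so $\hat\beta_1,\dots,\hat\beta_{n-2}$ witness that $\hat M$ is an SYS manifold.

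\textbf{Main difficulty.} The only genuinely delicate point is the factor $d$. The naive choice $\hat\beta_1=\pi^*\beta_1$ would yield only $\pi_*(\cdots)=d\,[M]\frown(\beta_1\smile\cdots\smile\beta_{n-2})$, and ``$d\alpha$ spherical'' does not imply ``$\alpha$ spherical'' (the quotient of $H_2$ by its spherical classes can have torsion), so that argument collapses. One must genuinely extract the $d$-th root $\hat\beta_1$ of $\pi^*\beta_1$ — which is exactly where the explicit cut-and-paste structure of $\hat M$, i.e.\ the non-separating hypersurface $\Sigma$, is used — and then remove the surviving factor of $d$ via torsion-freeness of $H_{n-1}(M;\Z)$. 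Everything else is formal naturality of the cup and cap products.
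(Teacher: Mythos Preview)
Your proof is correct and follows the same overall strategy as the paper: set $\hat\beta_1=\operatorname{PD}_{\hat M}[\Sigma_0]$ for a single lift $\Sigma_0$, take $\hat\beta_i=\pi^*\beta_i$ for $i\ge 2$, and push the capped class forward via the projection formula to land on $[M]\frown(\beta_1\smile\cdots\smile\beta_{n-2})$.

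The one place you diverge is in checking $\pi_*([\hat M]\frown\hat\beta_1)=[M]\frown\beta_1$. The paper does this in one step: $[\hat M]\frown\hat\beta_1=[\Sigma_0]$ by definition of Poincar\'e duality, and $\pi_*[\Sigma_0]=[\Sigma]=[M]\frown\beta_1$ because $\pi|_{\Sigma_0}$ is a diffeomorphism onto $\Sigma$. Your detour through $\pi^*\beta_1=d\,\hat\beta_1$ and then cancelling $d$ via torsion-freeness of $H_{n-1}(M;\Z)$ works, but the factor of $d$ you flag as the ``main difficulty'' never has to enter the computation at all. The genuinely essential point---which you identify correctly---is that one must use a single lift $\Sigma_0$ rather than the naive pullback $\pi^*\beta_1$; after that choice, everything is immediate.
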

\begin{proof}
Notice that $\Sigma$ is homological nontrivial, hence non-separating. Let $p: \hat{M} \to M$ be the covering map. Let $\Sigma_0$ be one copy of $\Sigma$ in $\hat{M}$, which is also non-separating.  
Let $\hat{\beta}_1 \in H^1(\hat{M};\Z)$ be the Poincar\'e dual of $\Sigma$.
Using naturality of the cup product and the cap product and the assumptions $[\Sigma_0] = [\hat M] \frown \hat{\beta}_1$, $[\Sigma] = [M] \frown \beta_1$, we have
\begin{align*}
& p_*\Big([\hat{M}] \frown (\hat{\beta}_1 \smile p^*\beta_2 \smile \dots \smile p^*\beta_{n-2})\Big) \\
&=p_*\Big(([\hat{M}] \frown \hat{\beta}_1) \frown p^*(\beta_2 \smile \dots \smile \beta_{n-2})\Big) \\
&=p_*\Big([\Sigma_0]\frown p^*(\beta_2 \smile \dots \smile \beta_{n-2})\Big) \\
&=p_*[\Sigma_0]\smile (\beta_2 \smile \dots \smile \beta_{n-2})\Big) \\
&=[\Sigma]\frown (\beta_2 \smile \dots \smile \beta_{n-2}) \\
&=([M] \frown \beta_1)\frown (\beta_2 \smile \dots \smile \beta_{n-2}) \\
&=[M] \frown (\beta_1 \smile \beta_2 \smile \dots \smile \beta_{n-2}).
\end{align*}
Since $[M] \frown (\beta_1 \smile \beta_2 \smile \dots \smile \beta_{n-2}) \in H_2(M;\Z)$ is non-spherical, this means the class $[\hat{M}] \frown (\hat{\beta}_1 \smile p^*\beta_2 \smile \dots \smile p^*\beta_{n-2}) \in H_2(\hat M; \Z)$ is non-spherical as well. Thus $\hat M$ is an SYS manifold as desired. 
\end{proof}

\begin{lem}\label{lem: SYS, hypersurface}
Let $M^n$ be an SYS manifold with nonzero cohomology
classes $\beta_1, \dots, \beta_{n-2}$ in $H^1(M;\Z)$ such that the homology class
$[M] \frown (\beta_1  \smile \dots \smile \beta_{n-2}) \in H_2(M;\Z)$ is non-spherical. Let $\Sigma^{n-1}$ be a closed embedded orientable hypersurface representing the Poincar\'e dual of $\beta_1$. Then $\Sigma$ is an SYS manifold. 
\end{lem}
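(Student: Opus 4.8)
The plan is to build the required SYS structure on $\Sigma$ by pulling back the ``leftover'' classes $\beta_2,\dots,\beta_{n-2}$ along the inclusion $\iota:\Sigma\hookrightarrow M$. Since $\dim\Sigma=n-1$, an SYS structure on $\Sigma$ consists of $n-3$ degree-one classes, and the natural candidates are $\gamma_i:=\iota^{*}\beta_{i+1}\in H^1(\Sigma;\Z)$ for $1\le i\le n-3$. So the whole content of the lemma is to show that
\[
[\Sigma]\frown(\gamma_1\smile\dots\smile\gamma_{n-3})\in H_2(\Sigma;\Z)
\]
is non-spherical; this automatically forces that class — and hence each $\gamma_i$ — to be nonzero, because the zero class always lies in the image of the Hurewicz homomorphism. (We may assume $\Sigma$ is connected; otherwise replace it by a connected component on which the argument below produces a non-spherical class, since a sum of spherical classes is spherical.)

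The key step is to show that $\iota_*$ carries this class to the non-spherical class we already have on $M$. Using that $\iota^{*}$ is a ring homomorphism, the associativity of the cap and cup products, and the projection (push--pull) formula $\iota_*(x\frown\iota^{*}\alpha)=\iota_*(x)\frown\alpha$, we compute
\[
\iota_*\big([\Sigma]\frown(\gamma_1\smile\dots\smile\gamma_{n-3})\big)
=\iota_*\big([\Sigma]\frown\iota^{*}(\beta_2\smile\dots\smile\beta_{n-2})\big)
=\iota_*[\Sigma]\frown(\beta_2\smile\dots\smile\beta_{n-2}).
\]
Since $\Sigma$ represents the Poincar\'e dual of $\beta_1$, we have $\iota_*[\Sigma]=[M]\frown\beta_1$, and then the associativity of cap and cup gives $([M]\frown\beta_1)\frown(\beta_2\smile\dots\smile\beta_{n-2})=[M]\frown(\beta_1\smile\beta_2\smile\dots\smile\beta_{n-2})$. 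Thus $\iota_*\big([\Sigma]\frown(\gamma_1\smile\dots\smile\gamma_{n-3})\big)=[M]\frown(\beta_1\smile\dots\smile\beta_{n-2})$.

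To finish I argue by contradiction in the style of Lemma~\ref{lem: SYS, degree 1} and Lemma~\ref{lem: SYS, cyclic cover}: if $[\Sigma]\frown(\gamma_1\smile\dots\smile\gamma_{n-3})$ were spherical, say equal to $\phi_*[S^2]$ for some $\phi:S^2\to\Sigma$, then $(\iota\circ\phi)_*[S^2]=\iota_*\phi_*[S^2]=[M]\frown(\beta_1\smile\dots\smile\beta_{n-2})$, so the latter would be spherical, contradicting the hypothesis on $M$. Hence $[\Sigma]\frown(\gamma_1\smile\dots\smile\gamma_{n-3})$ is non-spherical, the classes $\gamma_1,\dots,\gamma_{n-3}$ are nonzero, and $\Sigma$ is an SYS manifold. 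I do not anticipate a serious obstacle here: the only points needing care are the naturality identities — the projection formula for cap products and the identification $\iota_*[\Sigma]=[M]\frown\beta_1$ coming from Poincar\'e duality — together with the degenerate edge case $n=3$, where there are no $\gamma_i$'s and the statement reduces to $[\Sigma]\in H_2(\Sigma;\Z)$ being non-spherical, which the same computation still delivers.
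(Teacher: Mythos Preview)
Your proposal is correct and follows essentially the same argument as the paper: pull back $\beta_2,\dots,\beta_{n-2}$ along the inclusion, apply the projection formula $\iota_*(x\frown\iota^*\alpha)=\iota_*(x)\frown\alpha$ together with $\iota_*[\Sigma]=[M]\frown\beta_1$, and conclude that the resulting $H_2$-class on $\Sigma$ pushes forward to the non-spherical class on $M$. Your additional remarks on connectedness and the $n=3$ edge case are sound but not present in the paper's shorter proof.
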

\begin{proof}
Consider the embedding $f:\Sigma \to M$.
Using naturality of cup product and cap product and the assumption $f_*[\Sigma] = [M] \frown \beta_1$, we have
\begin{align*}
& f_*\Big([\Sigma] \frown (f^*\beta_2 \smile \dots \smile f^*\beta_{n-2})\Big) \\
&= f_*([\Sigma]) \frown (\beta_2 \smile \dots \smile \beta_{n-2}) \\
&= ([M] \frown \beta_1) \frown (\beta_2 \smile \dots \smile \beta_{n-2}) \\
&= [M] \frown (\beta_1 \smile \beta_2 \smile \dots \smile \beta_{n-2}),
\end{align*}
so the class $[\Sigma] \frown (f^*\beta_2 \smile \dots \smile f^*\beta_{n-2}) \in H_2(\Sigma)$ is non-spherical as well. Thus $\Sigma$ is an SYS manifold as desired. 
\end{proof}

We are now ready to give a proof of Theorem~\ref{thm: SYS}.

\begin{proof}[Proof of Theorem~\ref{thm: SYS}]
Assume $M$ is an SYS manifold, $X$ is any closed $n$-manifold, and $M \# X$ admits a complete metric of positive scalar curvature. By taking a connected component we can assume $M$ is connected.

Let $\beta_1, \beta_2, \dots, \beta_{n-2}$ in $H^1(M;\Z)$ be the cohomology classes as in the definition of an SYS manifold. By Lemma~\ref{lem: representable}, we can take $\Sigma \subset M$ to be a closed embedded orientable hypersurface such that $[\Sigma] \in H_{n-1}(M;\Z)$ is dual to $\beta_1$. Then there exists a connected component $\Sigma'$ of $\Sigma$ such that if we denote the Poincar\'e dual of $[\Sigma']$ by $\beta'_1 \in H^1(M;\Z)$, then the homology class $[M] \frown (\beta'_1 \smile \beta_2 \smile \dots \smile \beta_{n-2}) \in H_2(M;\Z)$ is also non-spherical. Then by replacing $\beta_1$ by $\beta'_1$ and $\Sigma$ by $\Sigma'$, we can take $\Sigma$ to be a connected hypersurface dual to $\beta_1$.

Apply Theorem~\ref{thm: main construction} with $m=n-1$ and $a = 1$. Then $m$-intermediate curvature reduces to scalar curvature and we have $\cC_{n-1} =\frac{1}{2} R$. We obtain a closed connected orientable Riemannian manifold $(\tilde Y, \tilde g)$, a smooth function $h \in C^\infty(Y)$, and a closed embedded orientable hypersurface $\Lambda^{n-1} \subset \tilde{Y}$ such that
\begin{enumerate}[(i)]
\item $\tilde Y = M' \#_i \tilde{X}_i$, where $M'$ is a finite cyclic covering of $M$ obtained by cutting and pasting along $\Sigma$ and the $\tilde{X}_i$'s are a finite number of closed manifolds.
\item In a neighborhood of $\Lambda$, $\tilde Y$ has positive scalar curvature.
\item $p_*[\Lambda] = [\Sigma] \in H_{n-1}(M')$, where $p: \tilde Y \to M'$ is the projection map and $[\Sigma]$ is the homology class represented by any copy of $\Sigma$ in $M'$.
\item On $\Lambda$, we have 
$$H_\Lambda = h,$$
$$\frac{1}{2}R_{\tilde{Y}} + h^2 - 2|D_{\tilde{Y}} h| > 0,$$
and 
$$\cQ(\psi) 
= \int_\Lambda \left(|D_\Lambda \psi|^2  - \big(|\sff_{\Lambda}|^2 + \Ric_{\tilde{Y}}(\nu,\nu) + \langle D_{\tilde{Y}} h, \nu \rangle\big)\psi^2 
\right)d\cH^{n-1} \ge 0
$$
for all $\psi \in C^\infty(\Lambda)$. 
\end{enumerate}
Using conditions (i) and (iii) and Lemmas \ref{lem: SYS, cyclic cover}, \ref{lem: SYS, degree 1}, \ref{lem: SYS, hypersurface}, we have that $\Lambda$ is an SYS manifold. 

On the other hand, the traced Gaussian equation gives 
$$R_{\tilde{Y}} = R_\Lambda + 2  \operatorname{Ric}_{\tilde{Y}}(\nu,\nu)+ |\sff_\Lambda|^2 - H_\Lambda^2.$$
Applying this in (iv), we obtain that 
\begin{align*}
\int_\Lambda|D_\Lambda \psi|^2 d\cH^{n-1} &\ge \int_\Lambda  \big(|\sff_{\Lambda}|^2 + \Ric_{\tilde{Y}}(\nu,\nu) + \langle D_{\tilde{Y}} h, \nu \rangle\big)\psi^2 
d\cH^{n-1} \\    
& = \int_\Lambda  \big(|\sff_{\Lambda}|^2 + \frac{1}{2}(R_{\tilde{Y}} - R_\Lambda -|\sff_\Lambda|^2 + h^2) + \langle D_{\tilde{Y}} h, \nu \rangle\big) \psi^2
d\cH^{n-1} \\
& \ge \frac{1}{2} \int_\Lambda (R_{\tilde{Y}}  + h^2 - 2|D_{\tilde{Y}} h|\big)\psi^2 d\cH^{n-1} 
- \frac{1}{2}\int_\Lambda R_\Lambda \psi^2 d\cH^{n-1},
\end{align*}
so by (ii) and (iv), 
$$\int_\Lambda(2|D_\Lambda \psi|^2 + R_\Lambda \psi^2 ) d\cH^{n-1} \ge \int_\Lambda (R_{\tilde{Y}}  + h^2 - 2|D_{\tilde{Y}} h|\big)\psi^2 d\cH^{n-1}  > 0$$
for all $0 \neq \psi \in C^\infty(\Lambda)$.

Since $4\frac{n-1}{n-2} \ge 2$ for $n \ge 3$, this shows the conformal Laplacian $L = - 4\frac{n-1}{n-2}\Delta_\Lambda + R_\Lambda$ has positive first eigenvalue. If we let $\phi>0$ denote the first eigenfunction and $g_\Lambda$ denote the induced metric of $\Lambda$, then $(\Lambda, \phi^{\frac{4}{n-2}}g_\Lambda)$ has scalar curvature ${\tilde {R}}=\phi^{-(n+2)/(n-2)}L\phi > 0$.

This is a contradiction because by \cite{SY:descent}, an SYS manifold of dimension $3 \le n \le 7$ cannot admit a metric of positive scalar curvature.
\end{proof}

\section{Proof of Theorem~\ref{thm: intermediate curvature}}
\subsection{Modified stable weighted slicings}
In this subsection, we closely follow Section 3 of \cite{brendle2022generalization}. We modify the construction of stable weighted slicing given there and define the modified stable weighted slicing as follows. The only difference is how we define the top slice $\Sigma_1$.
For a stable weighted slicing, $\Sigma_1$ is a stable minimal hypersurface of $\Sigma_0$; in comparison, we require $\Sigma_1$ to come from the boundary component of some $\mu$-bubble. In particular, $\Sigma_1$ is the same type of hypersurface that we obtain from Theorem~\ref{thm: main construction}.
Our goal is to show that positive $m$-intermediate curvature obstructs the existence of modified stable weighted slicings.

\begin{defn}[Modified stable weighted slicing of order $m$ with constant $a$] \ \\
Suppose $2 \leq m \leq n-1$ and let
$(N^n,g)$ be a Riemannian manifold of dimension $\dim N = n$. A modified stable weighted slicing of order $m$ with constant $a>0$ consists of a collection of submanifolds $\Sigma_k$, $0 \leq k \leq m$, a smooth function $h \in C^\infty(N)$, and a collection of positive functions $\rho_k \in C^\infty(\Sigma_k)$ satisfying the following conditions: 
\begin{itemize}
\item $\Sigma_0 = N$ and $\rho_0 = 1$.
\item For $k=1$, $\Sigma_1$ is an 
embedded two-sided hypersurface in $\Sigma_0$ such that  
\begin{itemize}
\item the mean curvature satisfies $H_{\Sigma_1} = h,$
\item the operator $\cL_1 = -\Delta_{\Sigma_1} -|\sff_{\Sigma_1}|^2 - \Ric_{\Sigma_0}(\nu_1,\nu_1) - \langle D_{\Sigma_0} h, \nu_1 \rangle$ is a non-negative operator, where $\nu_1$ is a unit normal vector field along $\Sigma_1$,
\item we have $(\cC_m)_{\Sigma_0} + aH_{\Sigma_1}^2 - |D_{\Sigma_0} h| > 0,$
on $\Sigma_1$.
\end{itemize}

\item For each $2 \leq k \leq m$, $\Sigma_k$ is an embedded two-sided hypersurface in $\Sigma_{k-1}$. Moreover, $\Sigma_k$ is a stable critical point of the $\rho_{k-1}$-weighted area
 \[
  \mathcal{H}^{n-k}_{\rho_{k-1}}(\Sigma) = \int_\Sigma \rho_{k-1} \, d\mu
 \]
in the class of hypersurfaces $\Sigma \subset \Sigma_{k-1}$.
\item For $k=1$, $v_1 = \rho_1 \in C^\infty(\Sigma_1)$ is a first eigenfunction of $\cL_1$. For each $2 \leq k \leq m$, the function $v_k = \frac{\rho_k}{\rho_{k-1}|_{\Sigma_k}} \in C^\infty(\Sigma_k)$ is a first eigenfunction of the stability operator associated with the $\rho_{k-1}$-weighted area. 
\end{itemize}
 \label{defn: modified stable weighted slicing}
\end{defn}
Let $(N^n, g)$ be a closed Riemannian manifold of dimension $n$. Throughout this subsection, we assume that we are given a modified stable weighted slicing of order $m$. Then all the calculations in \cite[Section 3]{brendle2022generalization} for $\Sigma_k$, $2 \le k \le m$ carry over, and we record them here.

By the first variation formula for weighted area, Corollary
 2.2 in \cite{brendle2022generalization},
 the mean curvature $H_{\Sigma_k}$ of the slice $\Sigma_{k}$ in 
 the manifold $\Sigma_{k+1}$
 satisfies for $2 \leq k \leq m$ the relation 
 \[
  H_{\Sigma_k} = - \langle D_{\Sigma_{k-1}} \log \rho_{k-1}, \nu_k \rangle.
 \]

 By the second variation formula for weighted area, Proposition 2.3 in \cite{brendle2022generalization},
 we obtain for $2 \leq k \leq m$
 the inequality
 \begin{align*}
  0 \leq& \int_{\Sigma_{k}}
  \rho_{k-1}
  \left(
  - \psi \Delta_{\Sigma_k} \psi
  - \psi \langle D_{\Sigma_k} \log \rho_{k-1}, D_{\Sigma_k}
  \psi \rangle
  \right) \, d\mu \\
  &- 
  \int_{\Sigma_k}
  \rho_{k-1}
  \left( 
  |\sff_{\Sigma_k}|^2 + \Ric_{\Sigma_{k-1}}(\nu_k, \nu_k)
  - (D_{\Sigma_{k-1}}^2 \log \rho_{k-1})(\nu_k, \nu_k)
  \right) \psi^2
  \, d\mu
 \end{align*}
 for all $\psi \in C^{\infty}(\Sigma_k)$. By Definition \ref{defn: modified stable weighted slicing} we may write $\rho_k = \rho_{k-1} \, v_k$, where $v_k > 0$ is the first eigenfunction of the stability operator for the weighted area functional on $\Sigma_k$. The function $v_k$ satisfies
   \begin{align*}
  \lambda_k v_k 
  =&
   - \Delta_{\Sigma_k} v_k 
   - \langle D_{\Sigma_k} \log \rho_{k-1}, D_{\Sigma_k} v_k \rangle
  - \left( 
  |\sff_{\Sigma_k}|^2 + \Ric_{\Sigma_{k-1}}(\nu_k, \nu_k) \right) v_k \\
  &+ (D_{\Sigma_{k-1}}^2 \log \rho_{k-1})(\nu_k, \nu_k)
   v_k,
 \end{align*}
where $\lambda_k \geq 0$ denotes the first eigenvalue of the stability operator.  

 By setting $w_k = \log v_k$ we record the following equation:
 \begin{equation}
 \begin{aligned}
\lambda_k =& - \Delta_{\Sigma_k} w_k - \langle D_{\Sigma_k} \log \rho_{k-1}, D_{\Sigma_k} w_k \rangle - \left ( 
  |\sff_{\Sigma_k}|^2 + \Ric_{\Sigma_{k-1}}(\nu_k, \nu_k) \right ) \\
  &+ (D_{\Sigma_{k-1}}^2 \log \rho_{k-1})(\nu_k, \nu_k)
  - |D_{\Sigma_k}  w_k|^2.
  \end{aligned}
  \label{equation:LograrithmicWeights}
\end{equation}

\begin{lem}[First slicing identity, {\cite[Lemma 3.1]{brendle2022generalization}}] \ \\
We have for $2 \leq k \leq m$ the identity
\begin{align*}
 \Delta_{\Sigma_k} \log \rho_{k-1} + (D_{\Sigma_{k-1}}^2 \log \rho_{k-1}) (\nu_k, \nu_k)
 =
 \Delta_{\Sigma_{k-1}} \log \rho_{k-1} + H_{\Sigma_k}^2.
\end{align*}
\label{lem: FirstSlicingEquality}
\end{lem}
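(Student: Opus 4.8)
## Proof Proposal

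The plan is to follow the computation of Brendle--Hirsch--Johne \cite[Lemma 3.1]{brendle2022generalization} essentially verbatim, since the identity in question is purely local and involves only the hypersurface $\Sigma_k \subset \Sigma_{k-1}$ for $2 \le k \le m$, a range in which (by the remark preceding the lemma) all the computations of \cite[Section 3]{brendle2022generalization} carry over unchanged. The key point is that the defining relation $H_{\Sigma_k} = -\langle D_{\Sigma_{k-1}} \log \rho_{k-1}, \nu_k \rangle$ recorded above from the first variation formula already packages the mean curvature of the slice as a normal derivative of $\log \rho_{k-1}$, so the identity is really a statement comparing the intrinsic Laplacian of a function on $\Sigma_{k-1}$ (restricted to $\Sigma_k$) with its intrinsic Laplacian on $\Sigma_k$.

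First I would write, for any smooth function $\phi$ on $\Sigma_{k-1}$, the standard decomposition of the ambient Laplacian along the hypersurface $\Sigma_k$:
\[
\Delta_{\Sigma_{k-1}} \phi = \Delta_{\Sigma_k} (\phi|_{\Sigma_k}) + (D_{\Sigma_{k-1}}^2 \phi)(\nu_k, \nu_k) + H_{\Sigma_k} \, \langle D_{\Sigma_{k-1}} \phi, \nu_k \rangle,
\]
which follows by taking the trace of the Hessian $D_{\Sigma_{k-1}}^2 \phi$ over an adapted orthonormal frame $\{e_1, \dots, e_{n-k}, \nu_k\}$ and using the fact that the tangential second derivatives differ from $D_{\Sigma_k}^2$ by the second fundamental form term $\sff_{\Sigma_k}(e_i, e_j) \langle D_{\Sigma_{k-1}} \phi, \nu_k \rangle$, whose trace is $H_{\Sigma_k} \langle D_{\Sigma_{k-1}} \phi, \nu_k \rangle$. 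Applying this with $\phi = \log \rho_{k-1}$ and rearranging gives
\[
\Delta_{\Sigma_k} \log \rho_{k-1} + (D_{\Sigma_{k-1}}^2 \log \rho_{k-1})(\nu_k, \nu_k) = \Delta_{\Sigma_{k-1}} \log \rho_{k-1} - H_{\Sigma_k} \, \langle D_{\Sigma_{k-1}} \log \rho_{k-1}, \nu_k \rangle.
\]
Then I would substitute the first variation relation $\langle D_{\Sigma_{k-1}} \log \rho_{k-1}, \nu_k \rangle = -H_{\Sigma_k}$ into the last term, turning $-H_{\Sigma_k} \langle D_{\Sigma_{k-1}} \log \rho_{k-1}, \nu_k \rangle$ into $+H_{\Sigma_k}^2$, which yields exactly the claimed identity.

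I do not expect any real obstacle here; the only point requiring a little care is bookkeeping the sign conventions for $\sff_{\Sigma_k}$, $\nu_k$, and $H_{\Sigma_k}$ so that the trace of the tangential Hessian correction produces $+H_{\Sigma_k}\langle D_{\Sigma_{k-1}}\log\rho_{k-1},\nu_k\rangle$ with the sign consistent with the conventions fixed in Section 3 and with the mean curvature relation $H_{\Sigma_k} = -\langle D_{\Sigma_{k-1}}\log\rho_{k-1},\nu_k\rangle$ quoted above. Since the modified slicing differs from the slicing of \cite{brendle2022generalization} only in the definition of the top slice $\Sigma_1$, and this lemma concerns only $k \ge 2$, the argument is identical to theirs and I would simply cite \cite[Lemma 3.1]{brendle2022generalization} after indicating the one-line frame computation above.
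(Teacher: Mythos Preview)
Your proposal is correct and is exactly the standard argument: decompose $\Delta_{\Sigma_{k-1}}\phi$ along the hypersurface $\Sigma_k$ in an adapted frame, then substitute the first-variation relation $\langle D_{\Sigma_{k-1}}\log\rho_{k-1},\nu_k\rangle = -H_{\Sigma_k}$. The paper does not give its own proof of this lemma at all---it simply records the statement and cites \cite[Lemma~3.1]{brendle2022generalization}, whose proof is precisely the computation you wrote down---so your approach coincides with the one in the cited source.
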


\begin{lem}[Second slicing identity, {\cite[Lemma 3.2]{brendle2022generalization}}] \ \\
We have for $2 \leq k \leq m-1$
the identity
\begin{align*}
 \Delta_{\Sigma_{k}} \log \rho_{k}
  =& \Delta_{\Sigma_{k}} \log \rho_{k-1}
 +
   (D_{\Sigma_{k-1}}^2 \log \rho_{k-1})(\nu_k, \nu_k) \\
 &-
 \left(
 \lambda_{k}
 + |\sff_{\Sigma_{k}}|^2
 + \Ric_{\Sigma_{k-1}}(\nu_k, \nu_k)
 +
\langle D_{\Sigma_{k}} \log \rho_{k}, D_{\Sigma_{k}} w_{k} \rangle
 \right).
\end{align*}
\label{lem: SecondSlicingEquality}
\end{lem}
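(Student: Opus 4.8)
The plan is to derive the identity directly from the logarithmic Jacobi equation \eqref{equation:LograrithmicWeights} together with the defining relation $\rho_k = \rho_{k-1}\,v_k$ of Definition~\ref{defn: modified stable weighted slicing}. The entire computation takes place on $\Sigma_k$, so throughout I regard $\rho_{k-1}$ (hence $\log\rho_{k-1}$) as restricted to $\Sigma_k$ before any differentiation, and I keep careful track of the fact that $\Ric_{\Sigma_{k-1}}(\nu_k,\nu_k)$ and $(D_{\Sigma_{k-1}}^2\log\rho_{k-1})(\nu_k,\nu_k)$ are computed in the ambient manifold $\Sigma_{k-1}$, while every Laplacian and tangential gradient appearing in the conclusion is intrinsic to $\Sigma_k$.

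First I would record the pointwise identity $\log\rho_k = \log\rho_{k-1} + w_k$ on $\Sigma_k$, which is immediate from $\rho_k = \rho_{k-1}v_k$ and $w_k = \log v_k$. Applying $\Delta_{\Sigma_k}$ gives
\[
\Delta_{\Sigma_k}\log\rho_k = \Delta_{\Sigma_k}\log\rho_{k-1} + \Delta_{\Sigma_k} w_k,
\]
so it remains to compute $\Delta_{\Sigma_k} w_k$. For this I solve \eqref{equation:LograrithmicWeights} for $\Delta_{\Sigma_k} w_k$, obtaining
\[
\Delta_{\Sigma_k} w_k = -\lambda_k - \langle D_{\Sigma_k}\log\rho_{k-1}, D_{\Sigma_k} w_k\rangle - |D_{\Sigma_k} w_k|^2 - \big(|\sff_{\Sigma_k}|^2 + \Ric_{\Sigma_{k-1}}(\nu_k,\nu_k)\big) + (D_{\Sigma_{k-1}}^2\log\rho_{k-1})(\nu_k,\nu_k).
\]
The only manipulation left is to merge the two gradient terms. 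Taking the tangential gradient of the identity from the previous step gives $D_{\Sigma_k}\log\rho_k = D_{\Sigma_k}\log\rho_{k-1} + D_{\Sigma_k} w_k$, whence
\[
\langle D_{\Sigma_k}\log\rho_{k-1}, D_{\Sigma_k} w_k\rangle + |D_{\Sigma_k} w_k|^2 = \langle D_{\Sigma_k}\log\rho_{k-1} + D_{\Sigma_k} w_k,\, D_{\Sigma_k} w_k\rangle = \langle D_{\Sigma_k}\log\rho_k, D_{\Sigma_k} w_k\rangle.
\]
Substituting this back into the formula for $\Delta_{\Sigma_k} w_k$ and then into $\Delta_{\Sigma_k}\log\rho_k = \Delta_{\Sigma_k}\log\rho_{k-1} + \Delta_{\Sigma_k} w_k$ yields exactly the asserted identity.

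There is no substantial obstacle: the statement is a bookkeeping consequence of the second-variation/Jacobi equation already recorded as \eqref{equation:LograrithmicWeights}, and it is identical to the computation in \cite[Lemma~3.2]{brendle2022generalization}, which I would follow verbatim. The only point demanding a little care is the one noted above — distinguishing the intrinsic operators on $\Sigma_k$ from those on $\Sigma_{k-1}$, and implicitly restricting $\rho_{k-1}$ to $\Sigma_k$ before differentiating. I remark that the restriction $2 \le k \le m-1$ in the statement plays no role in this argument; it is imposed merely because $\rho_m$ is never used in the subsequent estimates, and the same identity in fact holds for $k = m$.
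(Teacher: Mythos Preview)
Your argument is correct and is exactly the intended one: the paper does not supply its own proof but simply cites \cite[Lemma~3.2]{brendle2022generalization}, and the computation there is precisely the one you wrote — take $\log\rho_k=\log\rho_{k-1}+w_k$, apply $\Delta_{\Sigma_k}$, substitute \eqref{equation:LograrithmicWeights}, and combine the two gradient terms into $\langle D_{\Sigma_k}\log\rho_k, D_{\Sigma_k} w_k\rangle$. Your side remark that the restriction $k\le m-1$ is inessential to the identity itself is also accurate.
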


\begin{lem}[Second slicing identity for $k=1$] \ \\
We have the identity
\begin{align*}
 \Delta_{\Sigma_{1}} \log \rho_{1}
  =& - \left(
 \lambda_{1}
 + |\sff_{\Sigma_{1}}|^2
 + \Ric_{\Sigma_{0}}(\nu_1, \nu_1)
 +
\langle D_{\Sigma_{1}} \log \rho_{1}, D_{\Sigma_{1}} w_{1} \rangle - \langle D_{\Sigma_0} h, \nu_1 \rangle
 \right).
\end{align*}
\label{lem: SecondSlicingEquality, k =1}
\end{lem}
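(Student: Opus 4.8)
The plan is to read the identity off directly from the eigenfunction equation for $\cL_1$, in the same way that \eqref{equation:LograrithmicWeights} is obtained for the slices $\Sigma_k$ with $2 \le k \le m$. In fact the $k=1$ case is a degenerate and simpler instance of Lemma~\ref{lem: FirstSlicingEquality} and Lemma~\ref{lem: SecondSlicingEquality}: since $\rho_0 \equiv 1$ on $\Sigma_0 = N$, every term involving $\log\rho_0$ drops out, and the only genuinely new feature is the weight $h$, which enters precisely because $\Sigma_1$ is the boundary of a $\mu$-bubble (as produced by Theorem~\ref{thm: main construction}) rather than a minimal hypersurface.

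First I would record that, by Definition~\ref{defn: modified stable weighted slicing}, the positive function $v_1 = \rho_1$ is a first eigenfunction of $\cL_1$, so that
\[
-\Delta_{\Sigma_1} v_1 - \bigl(|\sff_{\Sigma_1}|^2 + \Ric_{\Sigma_0}(\nu_1,\nu_1) + \langle D_{\Sigma_0} h, \nu_1\rangle\bigr) v_1 = \lambda_1 v_1,
\]
with $\lambda_1 \ge 0$ the first eigenvalue. Next I would pass to logarithmic variables: set $w_1 = \log v_1 = \log\rho_1$, so $v_1 = e^{w_1}$ and the elementary identity $\Delta_{\Sigma_1} e^{w_1} = e^{w_1}\bigl(\Delta_{\Sigma_1} w_1 + |D_{\Sigma_1} w_1|^2\bigr)$ holds. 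Dividing the eigenvalue equation by $v_1 > 0$ yields
\[
-\Delta_{\Sigma_1} w_1 - |D_{\Sigma_1} w_1|^2 - |\sff_{\Sigma_1}|^2 - \Ric_{\Sigma_0}(\nu_1,\nu_1) - \langle D_{\Sigma_0} h, \nu_1\rangle = \lambda_1,
\]
which is the $k=1$ analogue of \eqref{equation:LograrithmicWeights}. Finally, since $\log\rho_1 = w_1$ one may rewrite $|D_{\Sigma_1} w_1|^2 = \langle D_{\Sigma_1}\log\rho_1, D_{\Sigma_1} w_1\rangle$ and $\Delta_{\Sigma_1} w_1 = \Delta_{\Sigma_1}\log\rho_1$; solving for $\Delta_{\Sigma_1}\log\rho_1$ produces the stated identity. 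One should keep track of the orientation of the unit normal $\nu_1$, which is fixed by the normalization $H_{\Sigma_1} = h$ inherited from Theorem~\ref{thm: main construction}, so that the sign of $\langle D_{\Sigma_0} h, \nu_1\rangle$ is unambiguous.

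There is essentially no obstacle here: the computation is a one-line manipulation of the eigenvalue equation once logarithmic variables are introduced, and it is strictly easier than the $2 \le k \le m$ cases because there is no $\rho_{k-1}$-weighting to propagate. The only points requiring minor care are bookkeeping of signs and of the normal direction, and noting that the hypotheses of Definition~\ref{defn: modified stable weighted slicing} --- that $\Sigma_1$ is two-sided, that $\cL_1$ is a non-negative operator, and that $v_1 = \rho_1 > 0$ is its first eigenfunction --- are exactly what guarantee $w_1 = \log\rho_1$ is a well-defined smooth function, legitimizing the logarithmic substitution.
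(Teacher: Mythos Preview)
Your proposal is correct and follows essentially the same approach as the paper: the paper's proof is simply the remark that this is a direct computation using that $\rho_1$ is a first eigenfunction of $\cL_1$ with eigenvalue $\lambda_1$ and $w_1 = \log\rho_1$, which is exactly the manipulation you carry out in detail.
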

\begin{proof}
This is a direct computation using that $\rho_1$ is a first eigenfunction of $\cL_1$ with eigenvalue $\lambda_1$ and $w_1 = \log \rho_1$. 
\end{proof}

\begin{lem}[Stability inequality on the bottom slice, {\cite[Lemma 3.3]{brendle2022generalization}}] \ \\
On the bottom slice $\Sigma_m$
 we have the inequality
 \begin{align*}
  \int_{\Sigma_m}
  \rho_{m-1}^{-1}
  \left(
  \Delta_{\Sigma_{m-1}} \log \rho_{m-1}
  + H_{\Sigma_m}^2 \right) d\mu \geq
\int_{\Sigma_m} \rho_{m-1}^{-1}  \left(
|\sff_{\Sigma_{m}}|^2 + \Ric_{\Sigma_{m-1}}(\nu_{m}, \nu_{m})
  \right) 
  \, d\mu.
\end{align*}
\label{lem: StabilityInequality_BottomSlice}
\end{lem}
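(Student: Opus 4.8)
The plan is to plug a single, well-chosen test function into the weighted stability inequality for the bottom slice and then rewrite the resulting Hessian term using the first slicing identity. Since $\Sigma_m$ is a stable critical point of the $\rho_{m-1}$-weighted area in $\Sigma_{m-1}$, I would start from the second-variation inequality recorded above (the consequence of \cite[Proposition 2.3]{brendle2022generalization}), valid for all $\psi \in C^\infty(\Sigma_m)$. Integrating by parts over the closed manifold $\Sigma_m$, the first integral simplifies to $\int_{\Sigma_m}\rho_{m-1}|D_{\Sigma_m}\psi|^2\,d\mu$ (the cross term cancels against the $\langle D_{\Sigma_m}\log\rho_{m-1},D_{\Sigma_m}\psi\rangle$ term), so the inequality becomes
\[
\int_{\Sigma_m} \rho_{m-1}\bigl(|\sff_{\Sigma_m}|^2 + \Ric_{\Sigma_{m-1}}(\nu_m,\nu_m)\bigr)\psi^2 \, d\mu \le \int_{\Sigma_m} \rho_{m-1}|D_{\Sigma_m}\psi|^2 \, d\mu + \int_{\Sigma_m} \rho_{m-1}\,(D_{\Sigma_{m-1}}^2 \log\rho_{m-1})(\nu_m,\nu_m)\,\psi^2 \, d\mu.
\]

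The key step is to take $\psi = \rho_{m-1}^{-1}$, a smooth positive function on the closed manifold $\Sigma_m$, hence admissible. Then $\rho_{m-1}\psi^2 = \rho_{m-1}^{-1}$, and since $D_{\Sigma_m}\psi = -\rho_{m-1}^{-1}D_{\Sigma_m}\log\rho_{m-1}$ we also get $\rho_{m-1}|D_{\Sigma_m}\psi|^2 = \rho_{m-1}^{-1}|D_{\Sigma_m}\log\rho_{m-1}|^2$. Next I would invoke Lemma~\ref{lem: FirstSlicingEquality} with $k=m$ to substitute
\[
(D_{\Sigma_{m-1}}^2 \log\rho_{m-1})(\nu_m,\nu_m) = \Delta_{\Sigma_{m-1}}\log\rho_{m-1} + H_{\Sigma_m}^2 - \Delta_{\Sigma_m}\log\rho_{m-1}.
\]
After this substitution and a rearrangement of terms, the asserted inequality is seen to be equivalent to
\[
\int_{\Sigma_m} \rho_{m-1}^{-1}\bigl(|D_{\Sigma_m}\log\rho_{m-1}|^2 - \Delta_{\Sigma_m}\log\rho_{m-1}\bigr)\, d\mu \le 0.
\]

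To finish I would integrate by parts once more on $\Sigma_m$: using $D_{\Sigma_m}(\rho_{m-1}^{-1}) = -\rho_{m-1}^{-1}D_{\Sigma_m}\log\rho_{m-1}$ one obtains $\int_{\Sigma_m}\rho_{m-1}^{-1}\Delta_{\Sigma_m}\log\rho_{m-1}\, d\mu = \int_{\Sigma_m}\rho_{m-1}^{-1}|D_{\Sigma_m}\log\rho_{m-1}|^2\, d\mu$, so the displayed left-hand side is in fact exactly zero and the lemma follows. There is no genuine analytic obstacle here; the computation is the one of \cite[Lemma 3.3]{brendle2022generalization}, which applies verbatim because it involves only the slices $\Sigma_k$ with $2 \le k \le m$ — for which all formulas of \cite[Section 3]{brendle2022generalization} were noted above to carry over. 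The only points requiring care are the sign bookkeeping when moving the Hessian term across the inequality and the fact that every integration by parts is carried out on a closed manifold; the latter is guaranteed in our setting since $\Sigma_1 = \Lambda$ is closed by Theorem~\ref{thm: main construction} and each subsequent slice is a closed embedded hypersurface.
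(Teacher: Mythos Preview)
Your argument is correct and is precisely the computation of \cite[Lemma 3.3]{brendle2022generalization}; the paper does not give its own proof but simply cites that reference, noting that the relevant formulas for slices $\Sigma_k$ with $2\le k\le m$ carry over unchanged. One small quibble: after substituting the first slicing identity you say the lemma is ``equivalent to'' the displayed inequality $\int_{\Sigma_m}\rho_{m-1}^{-1}(|D_{\Sigma_m}\log\rho_{m-1}|^2-\Delta_{\Sigma_m}\log\rho_{m-1})\,d\mu\le 0$, but strictly speaking the stability inequality only gives an implication in one direction---since you then show this integral is exactly zero, the point is moot.
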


Similar to {\cite[Lemma 3.4]{brendle2022generalization}}, we have the following:
\begin{lem}[Main inequality]  \ \\
We have the inequality
\begin{align*}
    \int_{\Sigma_m} \rho_{m-1}^{-1}
    \left( 
    \Lambda + \mathcal{R} + \mathcal{G} + \mathcal{E} + \langle D_{\Sigma_0} h, \nu_1 \rangle
    \right) \, d\mu \leq 0,
\end{align*}
where the eigenvalue term $\Lambda$,
the intrinsic curvature term $\mathcal{R}$,
the extrinsic curvature term $\mathcal{E}$,
and the gradient term $\mathcal{G}$ are given by
\begin{align*}
\Lambda
&=\sum_{k=1}^{m-1} \lambda_k, \;
\mathcal{R}
=\sum_{k=1}^m\Ric_{\Sigma_{k-1}}(\nu_k,\nu_k), \;
\mathcal{G}
=\sum_{k=1}^{m-1}
\langle
 D_{\Sigma_k} \log \rho_k, D_{\Sigma_k} w_k
\rangle, \\
\; \text{and} \; \;
\mathcal{E}
&=\sum_{k=1}^m |\sff_{\Sigma_k}|^2 - \sum_{k=2}^m H_{\Sigma_k}^2.
\end{align*}
\label{Lemma: MainInequality} 
\end{lem}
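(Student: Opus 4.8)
The plan is to accumulate the local identities recorded just above into a single global inequality on the bottom slice $\Sigma_m$, exactly paralleling the proof of Lemma 3.4 in \cite{brendle2022generalization}, with the only new feature being the contribution of the $\mu$-bubble slice $\Sigma_1$ via the extra term $\langle D_{\Sigma_0} h, \nu_1 \rangle$. The starting point is the stability inequality on the bottom slice (Lemma~\ref{lem: StabilityInequality_BottomSlice}):
\[
\int_{\Sigma_m}\rho_{m-1}^{-1}\bigl(\Delta_{\Sigma_{m-1}}\log\rho_{m-1}+H_{\Sigma_m}^2\bigr)\,d\mu
\ \geq\
\int_{\Sigma_m}\rho_{m-1}^{-1}\bigl(|\sff_{\Sigma_m}|^2+\Ric_{\Sigma_{m-1}}(\nu_m,\nu_m)\bigr)\,d\mu.
\]
The idea is to rewrite the left-hand side by telescoping the Laplacian terms $\Delta_{\Sigma_{m-1}}\log\rho_{m-1}$ downward through the slices. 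First I would apply the First slicing identity (Lemma~\ref{lem: FirstSlicingEquality}) to convert $\Delta_{\Sigma_{m-1}}\log\rho_{m-1}$ inside the integral over $\Sigma_m$; note this identity holds for $2 \le k \le m$, and since we integrate over $\Sigma_m$ it is the $k=m$ case that is directly used, while the lower cases feed the induction.

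Next I would iterate the Second slicing identities. For $2 \le k \le m-1$, Lemma~\ref{lem: SecondSlicingEquality} expresses $\Delta_{\Sigma_k}\log\rho_k$ in terms of $\Delta_{\Sigma_k}\log\rho_{k-1}$, the Hessian term, and a packet $\lambda_k + |\sff_{\Sigma_k}|^2 + \Ric_{\Sigma_{k-1}}(\nu_k,\nu_k) + \langle D_{\Sigma_k}\log\rho_k, D_{\Sigma_k} w_k\rangle$; combined with the First slicing identity (to rewrite $\Delta_{\Sigma_k}\log\rho_{k-1} + (D^2_{\Sigma_{k-1}}\log\rho_{k-1})(\nu_k,\nu_k)$ as $\Delta_{\Sigma_{k-1}}\log\rho_{k-1} + H_{\Sigma_k}^2$), one gets a recursion relating the "$\Delta\log\rho$" term on $\Sigma_k$ to the one on $\Sigma_{k-1}$, picking up at each step a $-\lambda_k$, a $-|\sff_{\Sigma_k}|^2$, a $-\Ric_{\Sigma_{k-1}}(\nu_k,\nu_k)$, a $-\langle D_{\Sigma_k}\log\rho_k,D_{\Sigma_k}w_k\rangle$, and a $+H_{\Sigma_k}^2$. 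Summing from $k=m$ down to $k=2$ telescopes all the Laplacian terms and leaves $\Delta_{\Sigma_1}\log\rho_1$ at the bottom. At that point I would invoke the new Lemma~\ref{lem: SecondSlicingEquality, k =1}, which says $\Delta_{\Sigma_1}\log\rho_1 = -(\lambda_1 + |\sff_{\Sigma_1}|^2 + \Ric_{\Sigma_0}(\nu_1,\nu_1) + \langle D_{\Sigma_1}\log\rho_1, D_{\Sigma_1} w_1\rangle - \langle D_{\Sigma_0} h, \nu_1\rangle)$ — this is precisely where the $\mu$-bubble term $\langle D_{\Sigma_0} h, \nu_1\rangle$ enters, and it is why the statement of the Main inequality carries that extra summand relative to \cite[Lemma 3.4]{brendle2022generalization}.

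Assembling everything, the $H_{\Sigma_k}^2$ terms for $2 \le k \le m$ that were generated in the telescoping cancel against the $H_{\Sigma_m}^2$ on the left and the intermediate ones, leaving exactly $\mathcal{E} = \sum_{k=1}^m |\sff_{\Sigma_k}|^2 - \sum_{k=2}^m H_{\Sigma_k}^2$; the $\lambda_k$ for $1 \le k \le m-1$ assemble into $\Lambda$ (note $\lambda_m$ does not appear, consistent with the definition); the Ricci terms for $1 \le k \le m$ assemble into $\mathcal{R}$; the gradient terms for $1 \le k \le m-1$ assemble into $\mathcal{G}$; and $\langle D_{\Sigma_0}h,\nu_1\rangle$ survives with a definite sign. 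Moving everything to one side of the stability inequality yields
\[
\int_{\Sigma_m}\rho_{m-1}^{-1}\bigl(\Lambda + \mathcal{R} + \mathcal{G} + \mathcal{E} + \langle D_{\Sigma_0} h, \nu_1\rangle\bigr)\,d\mu \le 0,
\]
which is the claim. The main obstacle is purely bookkeeping: one must track carefully which terms are integrated over which slice, use the First slicing identity to move Hessian-plus-Laplacian combinations down one level at each step, and verify that the telescoping of $\Delta_{\Sigma_k}\log\rho_k$ is exact so that no boundary or leftover Laplacian term remains (here compactness of each $\Sigma_k$, inherited from $N$ being closed, is what makes $\int_{\Sigma_k}\rho_{k-1}^{-1}\Delta_{\Sigma_k}(\cdots)\,d\mu$-type manipulations legitimate after the appropriate integration by parts). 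No genuinely new idea beyond \cite{brendle2022generalization} is needed once Lemma~\ref{lem: SecondSlicingEquality, k =1} is in hand; the extra $\langle D_{\Sigma_0}h,\nu_1\rangle$ rides along automatically.
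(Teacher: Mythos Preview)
Your proposal is correct and follows essentially the same approach as the paper: combine Lemmas~\ref{lem: FirstSlicingEquality} and \ref{lem: SecondSlicingEquality} into a one-step recursion for $2\le k\le m-1$, telescope to reduce $\Delta_{\Sigma_{m-1}}\log\rho_{m-1}$ to $\Delta_{\Sigma_1}\log\rho_1$, apply Lemma~\ref{lem: SecondSlicingEquality, k =1} to pick up the $\langle D_{\Sigma_0}h,\nu_1\rangle$ term, and substitute into Lemma~\ref{lem: StabilityInequality_BottomSlice}. Two minor corrections: the telescoping runs only over $2\le k\le m-1$ (there is no Second slicing identity at $k=m$), and since all the slicing identities are \emph{pointwise} on $\Sigma_m$, the only integral involved is the single one over $\Sigma_m$ from the bottom-slice stability inequality---no integration by parts is needed.
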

\begin{proof}
If we combine
Lemma \ref{lem: FirstSlicingEquality},
and 
Lemma \ref{lem: SecondSlicingEquality},
we obtain for $2 \leq k \leq m-1$
the identity
\begin{align*}
    \Delta_{\Sigma_k} \log \rho_k
 = \Delta_{\Sigma_{k-1}} \log \rho_{k-1}
 +
  H_{\Sigma_{k}}^2
  &-
 \left(
 \lambda_{k}
 + |\sff_{\Sigma_{k}}|^2
 + \Ric_{\Sigma_{k-1}}(\nu_k, \nu_k)
 +
\langle D_{\Sigma_{k}} \log \rho_{k}, D_{\Sigma_{k}} w_{k} \rangle
 \right).
\end{align*}

Summation of this formula over $k$ from $2$ to $m-1$ and using Lemma~\ref{lem: SecondSlicingEquality, k =1} yields 
\begin{align*}
    \Delta_{\Sigma_m-1} \log \rho_{m-1}
 =& - \langle D_{\Sigma_0} h, \nu_1 \rangle
 +\sum_{k=2}^{m-1} H_{\Sigma_k}^2  \\
 &-
 \sum_{k=1}^{m-1}
   \left(
  \lambda_{k}
  + |\sff_{\Sigma_{k}}|^2
  + \Ric_{\Sigma_{k-1}}(\nu_{k}, \nu_{k})
  + 
 \langle D_{\Sigma_{k}} \log \rho_{k}, D_{\Sigma_{k}} w_{k} \rangle
  \right).
\end{align*}
Plugging this into Lemma~\ref{lem: StabilityInequality_BottomSlice} yields the result.
\end{proof}

The eigenvalue term $\Lambda$ is non-negative, since it is the sum of the non-negative eigenvalues. To estimate the other terms in the above lemma, fix a point $x \in \Sigma_m$ and consider an orthonormal basis $\{e_1,\dots, e_n\}$ of $T_xN$ with $e_j = \nu_j$ for $1 \le j \le m$ as above. We define for each $1 \leq k \leq m$ the extrinsic curvature terms $\mathcal{V}_k$:
\begin{align*}
 \mathcal{V}_1
 =&
  |\sff_{\Sigma_1}|^2
  + 
   \sum_{p = 2}^{m} \sum_{q = p+1}^n
  \left(
   \sff_{\Sigma_1}(e_p, e_p) \sff_{\Sigma_1}(e_q, e_q)
   -
   \sff_{\Sigma_1}(e_p, e_q)^2 
  \right)
   ,
  \\
  \mathcal{V}_k =&
  |\sff_{\Sigma_k}|^2
  -  \left( \frac{1}{2} - \frac{1}{2(k-1)} \right)
 H_{\Sigma_k}^2 \\
 &+
\sum_{p = k+1}^{m} \sum_{q = p+1}^n
  \left(
   \sff_{\Sigma_k}(e_p, e_p) \sff_{\Sigma_k}(e_q, e_q)
   -
   \sff_{\Sigma_k}(e_p, e_q)^2 
  \right)
   \; \text{for} \; 2 \leq k \leq m - 1, \\
   \mathcal{V}_m
   =&
   |\sff_{\Sigma_m}|^2
    -  \left( \frac{1}{2} - \frac{1}{2(m-1)} \right)
 H_{\Sigma_m}^2.
\end{align*}

Inspecting the estimate for $\cG$ and $\cR$ in \cite[Lemma 3.7 and 3.8]{brendle2022generalization}, we see that the same calculations carry over, so we have the following lemma:
\begin{lem}{\cite[Lemma 3.10]{brendle2022generalization}} \ \\
We have the pointwise estimate on $\Sigma_m$:
\begin{align*}
 \mathcal{R} + \mathcal{E} + \mathcal{G}
 \geq \mathcal{C}_{m}(e_1, \dots, e_m) 
 + \sum_{k=1}^m \mathcal{V}_k.
 \end{align*}
 \label{lem: Reduced-MainInequality}
\end{lem}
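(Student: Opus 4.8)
The plan is to follow the Brendle--Hirsch--Johne strategy closely, combining the Main Inequality of Lemma~\ref{Lemma: MainInequality} with the pointwise estimates for $\mathcal{R}$, $\mathcal{E}$, $\mathcal{G}$ that have been imported from \cite{brendle2022generalization}, and to handle the new top-slice contribution $\langle D_{\Sigma_0} h, \nu_1\rangle$ together with the extra term $\mathcal{V}_1$ coming from the $\mu$-bubble slice. First I would recall that in the unmodified setting one rewrites $\mathcal{C}_m(e_1,\dots,e_m) = \sum_{p=1}^m\sum_{q=p+1}^n \Rm_N(e_p,e_q,e_p,e_q)$ and uses the Gauss equation on each slice $\Sigma_{k}\subset\Sigma_{k-1}$ to convert the ambient sectional curvatures $\Rm_{\Sigma_{k-1}}(e_p,e_q,e_p,e_q)$ into intrinsic sectional curvatures of $\Sigma_k$ plus second fundamental form terms. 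Summing these Gauss equations over $k=1,\dots,m$ telescopes the curvature terms: the intrinsic sectional curvatures of $\Sigma_{k-1}$ that appear when we expand the Ricci terms in $\mathcal{R}$ cancel against the ambient sectional curvatures produced by the next Gauss equation, leaving $\mathcal{C}_m(e_1,\dots,e_m)$ on $\Sigma_m$ together with a collection of second fundamental form quadratic expressions, which is exactly what gets packaged into the $\mathcal{V}_k$.

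The key steps, in order, would be: (1) expand $\mathcal{R} = \sum_{k=1}^m \Ric_{\Sigma_{k-1}}(\nu_k,\nu_k)$ as a double sum of sectional curvatures $\sum_{k=1}^m \sum_{q=k+1}^n \Rm_{\Sigma_{k-1}}(e_k,e_q,e_k,e_q)$, using that $\{e_1,\dots,e_n\}$ is orthonormal with $e_j = \nu_j$; (2) for each term apply the Gauss equation $\Rm_{\Sigma_{k-1}}(e_p,e_q,e_p,e_q) = \Rm_{\Sigma_k}(e_p,e_q,e_p,e_q) + \sff_{\Sigma_k}(e_p,e_p)\sff_{\Sigma_k}(e_q,e_q) - \sff_{\Sigma_k}(e_p,e_q)^2$ (for $p,q > k$, i.e. tangent to $\Sigma_k$), being careful that when $q \le m$ or $p \le m$ the relevant curvature is already a slice curvature and no Gauss equation is needed at that level; (3) reorganize the telescoping sum so that the ``intrinsic'' pieces collapse down to $\mathcal{C}_m$ evaluated on $\Sigma_m$ in the basis $\{e_1,\dots,e_n\}$ restricted appropriately, plus leftover $\sff$-terms; (4) match the leftover $\sff$-terms, together with the $|\sff_{\Sigma_k}|^2$ pieces of $\mathcal{E}$ and the $-H_{\Sigma_k}^2$ pieces of $\mathcal{E}$, against the definitions of $\mathcal{V}_1,\dots,\mathcal{V}_m$; (5) check that the gradient term $\mathcal{G}$ is nonnegative or absorbed exactly as in \cite[Lemma 3.7]{brendle2022generalization} — since $\mathcal{G} = \sum_{k=1}^{m-1}\langle D_{\Sigma_k}\log\rho_k, D_{\Sigma_k} w_k\rangle$ and the relation $\rho_k = \rho_{k-1} v_k$, $w_k = \log v_k$ together with the first slicing identity rearranges it into a full square plus controlled terms.

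The main structural point I would emphasize is that \emph{nothing} in this argument is sensitive to how $\Sigma_1$ was produced: the estimate of Lemma~\ref{lem: Reduced-MainInequality} is a purely pointwise Gauss-equation bookkeeping statement on $\Sigma_m$, and the role of $\Sigma_1$ being a $\mu$-bubble slice rather than a minimal slice only changes the \emph{global} inequality (the extra $\langle D_{\Sigma_0} h,\nu_1\rangle$ term in Lemma~\ref{Lemma: MainInequality}, which does not enter here) and the definition of $\mathcal{V}_1$, where the usual $-(\tfrac12 - \tfrac{1}{2(k-1)})H_{\Sigma_k}^2$ term is simply absent because $H_{\Sigma_1} = h$ is not controlled by a mean-curvature-from-weights identity. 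So I would define the modified $\mathcal{V}_1$ exactly as stated (with $|\sff_{\Sigma_1}|^2$ and the $p\ge 2$ double sum, no $H^2$ subtraction) and verify that the $k=1$ Gauss equation contributes precisely these terms. Concretely, for $k=1$ the Gauss equation produces $\sum_{q=2}^n \Rm_{\Sigma_0}(e_1,e_q,e_1,e_q) = \Ric_{\Sigma_0}(\nu_1,\nu_1)$ on one side and $\sum_{p,q \ge 2}$-type $\sff_{\Sigma_1}$ quadratics on the other, and the bookkeeping shows the $\sff_{\Sigma_1}$ quadratics not absorbed into building $\mathcal{C}_m$ are exactly $\mathcal{V}_1$.

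The hard part — or rather the part requiring the most care — will be step (3)/(4): tracking exactly which index pairs $(p,q)$ get a Gauss-equation correction at which level $k$, and confirming that after the full telescoping the surviving second-fundamental-form terms assemble, level by level, into the $\mathcal{V}_k$ as defined (in particular that the coefficient $\tfrac12 - \tfrac{1}{2(k-1)}$ on $H_{\Sigma_k}^2$ emerges correctly from combining the $-H_{\Sigma_k}^2$ in $\mathcal{E}$ with the Cauchy--Schwarz-type rearrangement $|\sff_{\Sigma_k}|^2 - \tfrac{1}{k-1}H_{\Sigma_k}^2 \ge 0$ used implicitly). Since the excerpt explicitly states that ``the same calculations carry over'' from \cite[Lemmas 3.7, 3.8, 3.10]{brendle2022generalization} for the slices $\Sigma_k$ with $2 \le k \le m$, I would cite those computations verbatim and only write out the genuinely new $k=1$ contribution in detail, then conclude by adding the $k=1$ piece to the $2 \le k \le m$ sum to obtain the displayed inequality $\mathcal{R} + \mathcal{E} + \mathcal{G} \ge \mathcal{C}_m(e_1,\dots,e_m) + \sum_{k=1}^m \mathcal{V}_k$ on $\Sigma_m$.
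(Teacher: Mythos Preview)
Your proposal is correct and matches the paper's approach: the paper gives no proof at all, simply citing \cite[Lemma 3.10]{brendle2022generalization} and observing (just before the statement) that the estimates for $\mathcal{G}$ and $\mathcal{R}$ from \cite[Lemmas 3.7 and 3.8]{brendle2022generalization} carry over verbatim, which is precisely your central observation that the lemma is pointwise Gauss-equation bookkeeping insensitive to how $\Sigma_1$ was produced. Two small cleanups: your opening sentence about ``combining the Main Inequality of Lemma~\ref{Lemma: MainInequality}'' and handling $\langle D_{\Sigma_0}h,\nu_1\rangle$ is irrelevant here (that term lives in the integral inequality, not in this pointwise estimate, as you yourself note later), and the parenthetical in step~(4) about a Cauchy--Schwarz rearrangement $|\sff_{\Sigma_k}|^2 - \tfrac{1}{k-1}H_{\Sigma_k}^2 \ge 0$ belongs to the subsequent Lemmas~\ref{lem: ExtrinsicCurvature_MidSlices}--\ref{lem: ExtrinsicCurvature_BottomSlice}, not to the derivation of the $\mathcal{V}_k$ themselves---the coefficient $\tfrac12 - \tfrac{1}{2(k-1)}$ arises from combining the $-H_{\Sigma_k}^2$ in $\mathcal{E}$ with the lower bound on $\mathcal{G}$ from \cite[Lemma 3.7]{brendle2022generalization}, not from Cauchy--Schwarz.
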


Now we need to estimate the extrinsic curvature terms $\cV_k$. The estimate on the top slice is what differs from \cite{brendle2022generalization}.

\begin{lem}[Extrinsic curvature terms on the top slice] \ \label{lem: extrinsic on top slice}\\
Suppose $m^2 - mn + 2n -2 > 0$ and $m^2 - mn + m + n > 0$. Then 
we have the estimate
\begin{align*}
    \mathcal{V}_1
    \geq a H_{\Sigma_1}^2 \ge 0,
\end{align*}
for any $0 \le a \le \min\{\frac{m}{2(m-1)}, \frac{1}{n-m}, \frac{m^2 - mn + m + n}{2(m^2 - mn + 2n -2)}\}$.
\label{lem: ExtrinsicCurvature_TopSlice}
\end{lem}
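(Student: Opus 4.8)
The plan is to follow the strategy of the corresponding estimate in \cite{brendle2022generalization}, isolating the diagonal part of $\sff_{\Sigma_1}$ and throwing away the genuinely negative-definite off-diagonal contribution. Write $h_{pq} = \sff_{\Sigma_1}(e_p,e_q)$ with respect to the orthonormal basis $\{e_1,\dots,e_n\}$ of $T_xN$ adapted to the slicing, and recall $H_{\Sigma_1} = \sum_{p=2}^n h_{pp}$ (note $e_1 = \nu_1$ is normal to $\Sigma_1$, so the trace runs from $p=2$ to $n$). First I would expand
\[
\mathcal{V}_1 = \sum_{p,q=2}^n h_{pq}^2 + \sum_{p=2}^m \sum_{q=p+1}^n \big( h_{pp} h_{qq} - h_{pq}^2 \big).
\]
The off-diagonal terms: each $h_{pq}^2$ with $p\ne q$ appearing in the first sum has coefficient $2$ (from symmetry $h_{pq}=h_{qp}$), and the subtracted $h_{pq}^2$ in the second sum appears at most once for each unordered pair, so every off-diagonal square survives with a nonnegative coefficient. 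Hence $\mathcal{V}_1 \ge \sum_{p=2}^n h_{pp}^2 + \sum_{p=2}^m\sum_{q=p+1}^n h_{pp}h_{qq}$, reducing the problem to a purely algebraic inequality in the $n-1$ diagonal entries $x_p := h_{pp}$, $p=2,\dots,n$.

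Next I would prove that the quadratic form $Q(x) := \sum_{p=2}^n x_p^2 + \sum_{p=2}^m \sum_{q=p+1}^n x_p x_q$ satisfies $Q(x) \ge a\,(\sum_{p=2}^n x_p)^2$ for the stated range of $a$. This is where the two hypotheses $m^2 - mn + 2n - 2 > 0$ and $m^2 - mn + m + n > 0$ should enter, and I expect this to be the main obstacle: one must find the smallest eigenvalue-type constant $c$ such that $Q(x) - c\,(\sum x_p)^2 \ge 0$ for all $x\in\R^{n-1}$, and verify $c$ equals (or dominates) the given minimum. The cleanest route is Lagrange multipliers / diagonalization exploiting the block structure: the variables $x_2,\dots,x_m$ enter symmetrically among themselves, and $x_{m+1},\dots,x_n$ enter symmetrically among themselves, so by symmetrization it suffices to test vectors of the form $x_p = s$ for $2\le p\le m$ and $x_p = t$ for $m+1\le p\le n$. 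Plugging in, $\sum x_p = (m-1)s + (n-m)t$ and $Q = (m-1)s^2 + (n-m)t^2 + \binom{m-1}{2}s^2 + (m-1)(n-m)st$, and one minimizes the ratio over $(s,t)$; the resulting $2\times 2$ quadratic-form comparison produces exactly the three competing bounds $\tfrac{m}{2(m-1)}$, $\tfrac{1}{n-m}$, and $\tfrac{m^2-mn+m+n}{2(m^2-mn+2n-2)}$ as the constraints for positive semidefiniteness of $Q - a(\sum x_p)^2$ restricted to, respectively, the $t=0$ locus, the $s=0$ locus, and the mixed direction. The hypothesis $m^2-mn+2n-2>0$ guarantees the denominator of the third bound is positive so the bound is meaningful, and $m^2-mn+m+n>0$ guarantees that third bound is itself positive, so the $\min$ is a positive number and a valid choice $0\le a\le\min\{\cdots\}$ exists.

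Finally, I would assemble: for such $a$ we get $\mathcal{V}_1 \ge Q(x) \ge a(\sum_{p=2}^n x_p)^2 = a H_{\Sigma_1}^2 \ge 0$, which is the claim. I should double-check the index bookkeeping for the off-diagonal terms (the fact that in $\mathcal{V}_1$ the correction sum only runs $p\le m$ while the full $|\sff_{\Sigma_1}|^2$ runs over all indices is exactly what makes the off-diagonal part nonnegative rather than needing a Cauchy--Schwarz estimate), and confirm that the reduction by symmetrization is valid — since $Q - a(\sum x_p)^2$ is invariant under permuting $\{x_2,\dots,x_m\}$ and under permuting $\{x_{m+1},\dots,x_n\}$, its minimum over the sphere is attained on the fixed-point set of this group action, i.e. on vectors constant on each block, justifying the two-parameter reduction. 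The rest is the routine but slightly tedious $2\times 2$ discriminant computation, which I will carry out in detail in the actual proof.
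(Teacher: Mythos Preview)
Your approach matches the paper's: discard the off-diagonal terms of $\sff_{\Sigma_1}$, reduce to a quadratic form $Q$ in the diagonal entries, collapse to a two-variable problem, and verify positive semidefiniteness of the resulting $2\times 2$ form. The only point to fix is your justification for the two-parameter reduction: it is \emph{not} true in general that a $G$-invariant quadratic form attains its minimum over the sphere on the fixed-point set of $G$ (the minimizing eigenvector can live in a non-trivial isotypic component). What makes the reduction valid here is that, writing $A=\sum_{p=2}^m x_p$ and $B=\sum_{q=m+1}^n x_q$, one has
\[
Q(x)=\tfrac{1}{2}\sum_{p=2}^m x_p^2+\sum_{q=m+1}^n x_q^2+\tfrac{1}{2}A^2+AB,
\]
and for fixed $A,B$ the first two sums are minimized at block-constant vectors by Cauchy--Schwarz; this is exactly the step the paper writes out explicitly, arriving at $(\tfrac{m}{2(m-1)}-a)A^2+(\tfrac{1}{n-m}-a)B^2+(1-2a)AB$, whose nonnegativity (via AM--GM on the first two terms and a discriminant check) is precisely the condition $a\le \tfrac{m^2-mn+m+n}{2(m^2-mn+2n-2)}$ together with $a\le\tfrac{m}{2(m-1)}$ and $a\le\tfrac{1}{n-m}$. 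Replace the symmetry appeal with this Cauchy--Schwarz step and your proof goes through.
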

\begin{proof}
Consider the quantity $\cV_1 - aH_{\Sigma_1}^2$ for some $a$ satisfying $$0 \le a \le \min\{\frac{m}{2(m-1)}, \frac{1}{n-m}, \frac{m^2 - mn + m + n}{2(m^2 - mn + 2n -2)}\}.$$
We begin by discarding the off-diagonal terms of the second fundamental form $\sff_{\Sigma_1}$:
\begin{align*}
& \cV_1 - aH_{\Sigma_1}^2 \\
=& \,   |\sff_{\Sigma_1}|^2
  + 
   \sum_{p = 2}^{m} \sum_{q = p+1}^n
  \left(
   \sff_{\Sigma_1}(e_p, e_p) \sff_{\Sigma_1}(e_q, e_q)
   -
   \sff_{\Sigma_1}(e_p, e_q)^2 
  \right) - aH_{\Sigma_1}^2 \\
\ge & \,\sum_{p=2}^n \sff_{\Sigma_1}(e_p,e_p)^2
  + 
   \sum_{p = 2}^{m} \sum_{q = p+1}^n
   \sff_{\Sigma_1}(e_p, e_p) \sff_{\Sigma_1}(e_q, e_q)
 - aH_{\Sigma_1}^2 \\
=&\, \sum_{p=2}^n \sff_{\Sigma_1}(e_p,e_p)^2
  + 
   \sum_{p = 2}^{m}  \sff_{\Sigma_1}(e_p, e_p)  \sum_{q = p+1}^m
\sff_{\Sigma_1}(e_q, e_q)
  + 
   \sum_{p = 2}^{m}  \sff_{\Sigma_1}(e_p, e_p)  \sum_{q = m+1}^n
\sff_{\Sigma_1}(e_q, e_q)
 - aH_{\Sigma_1}^2 \\
=& \,\frac{1}{2}\sum_{p=2}^m \sff_{\Sigma_1}(e_p,e_p)^2
  + \sum_{q=m+1}^n \sff_{\Sigma_1}(e_q,e_q)^2
 +
   \frac{1}{2}\left(\sum_{p = 2}^{m}  \sff_{\Sigma_1}(e_p, e_p) \right)^2
    \\
   & \,+ 
   \sum_{p = 2}^{m}  \sff_{\Sigma_1}(e_p, e_p)  \sum_{q = m+1}^n
\sff_{\Sigma_1}(e_q, e_q)
 - a\left( \sum_{p = 2}^{m}  \sff_{\Sigma_1}(e_p, e_p) +  \sum_{q=m+1}^n \sff_{\Sigma_1}(e_q,e_q)\right)^2 \\
 = & \,\frac{1}{2}\sum_{p=2}^m \sff_{\Sigma_1}(e_p,e_p)^2
  + \sum_{q=m+1}^n \sff_{\Sigma_1}(e_q,e_q)^2
 +
   \left(\frac{1}{2} - a \right)\left(\sum_{p = 2}^{m}  \sff_{\Sigma_1}(e_p, e_p) \right)^2
     \\
  & \, - a \left(\sum_{q=m+1}^n \sff_{\Sigma_1}(e_q,e_q) \right)^2 + \left( 1 - 2a\right)
   \sum_{p = 2}^{m}  \sff_{\Sigma_1}(e_p, e_p)  \sum_{q = m+1}^n
\sff_{\Sigma_1}(e_q, e_q).
\end{align*}
For simplicity, let $A: = \sum_{p = 2}^{m}  \sff_{\Sigma_1}(e_p, e_p)$ and $B: = \sum_{q = m+1}^{n}  \sff_{\Sigma_1}(e_q, e_q)$. By the Cauchy–Schwarz inequality,
$$\sum_{p=2}^m \sff_{\Sigma_1}(e_p,e_p)^2 \ge \frac{1}{m-1}\left(\sum_{p = 2}^{m}  \sff_{\Sigma_1}(e_p, e_p) \right)^2 =  \frac{1}{m-1} A^2$$
and 
$$\sum_{q=m+1}^n \sff_{\Sigma_1}(e_q,e_q)^2 \ge \frac{1}{n-m}\left(\sum_{q = m+1}^{n}  \sff_{\Sigma_1}(e_q, e_q) \right)^2 = \frac{1}{n-m} B^2.$$
Thus 
\begin{align*}
\cV_1 - aH_{\Sigma_1}^2
\ge & \left(\frac{m}{2(m-1)} - a \right)A^2
+  \left(\frac{1}{n-m} - a \right)B^2 + \left( 1 - 2a\right)
  AB \\
\ge & \left( 2 \sqrt{\left(\frac{m}{2(m-1)} - a \right)\left(\frac{1}{n-m} - a \right)} - (1-2a) \right)|AB|,
\end{align*}
using the assumptions $a \le \frac{m}{2(m-1)}$ and $a \le \frac{1}{n-m}$ and the AM-GM inequality.

Using $a \le \frac{m^2 - mn + m + n}{2(m^2 - mn + 2n -2)}$, we have
\begin{align*}
&  4 \left(\frac{m}{2(m-1)} - a \right)\left(\frac{1}{n-m} - a \right) - (1-2a)^2 \\
= & \,\frac{m^2 - mn + m + n}{(m-1)(n-m)} - \frac{2(m^2 - mn + 2n-2)}{(m-1)(n-m)}a \\
\ge & \, 0,
\end{align*}
so 
$\cV_1 - aH_{\Sigma_1}^2 \ge 0$
as desired.

\end{proof}

Again, the estimate for $2 \le k \le m$ in \cite{brendle2022generalization} carry over, so we have the following two lemmas.
\begin{lem}[Extrinsic curvature terms on intermediate slices, {\cite[Lemma 3.12]{brendle2022generalization}}] \ \\
We have for $2 \leq k \leq m-1$ the estimate
\begin{align*}
    \mathcal{V}_k \geq \frac{m^2-mn+2n-2}{2(m-1)(n-m)}
    \left ( \sum_{q=m+1}^n \sff_{\Sigma_k}(e_q,e_q) \right )^2.
\end{align*}
\label{lem: ExtrinsicCurvature_MidSlices}
\end{lem}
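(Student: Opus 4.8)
The plan is to reproduce the argument of Brendle--Hirsch--Johne \cite[Lemma 3.12]{brendle2022generalization} essentially verbatim: the intermediate slices $\Sigma_k$ with $2 \le k \le m-1$ enter our modified slicing in exactly the same way as in theirs, and nothing in their derivation of this particular estimate uses the special nature of the top slice, so it carries over unchanged. To set up, abbreviate $h_{pq} := \sff_{\Sigma_k}(e_p, e_q)$ for $k+1 \le p,q \le n$, so that $H_{\Sigma_k} = \sum_{p=k+1}^n h_{pp}$ and $|\sff_{\Sigma_k}|^2 = \sum_{p,q=k+1}^n h_{pq}^2$, and write $A := \sum_{p=k+1}^m h_{pp}$ and $B := \sum_{q=m+1}^n h_{qq}$, so that $H_{\Sigma_k} = A + B$ and the right-hand side of the asserted inequality is $\tfrac{m^2-mn+2n-2}{2(m-1)(n-m)}\,B^2$.

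First I would discard the off-diagonal entries of $\sff_{\Sigma_k}$: every negative term $-h_{pq}^2$ occurring in $\mathcal{V}_k$ is absorbed by the matching positive entry inside $|\sff_{\Sigma_k}|^2$, which leaves
\[
\mathcal{V}_k \ \ge\ \sum_{p=k+1}^n h_{pp}^2 \ -\ \Big(\tfrac12 - \tfrac1{2(k-1)}\Big) H_{\Sigma_k}^2 \ +\ \sum_{p=k+1}^m\sum_{q=p+1}^n h_{pp}h_{qq}.
\]
Next I would rewrite the surviving double sum by splitting off the block $\{m+1,\dots,n\}$, using $\sum_{p=k+1}^m\sum_{q=p+1}^n h_{pp}h_{qq} = \tfrac12\big(A^2 - \sum_{p=k+1}^m h_{pp}^2\big) + AB$, and then apply the Cauchy--Schwarz inequalities $\sum_{p=k+1}^m h_{pp}^2 \ge \tfrac1{m-k}A^2$ and $\sum_{q=m+1}^n h_{qq}^2 \ge \tfrac1{n-m}B^2$; these are legitimate because $2 \le k \le m-1$ forces $m-k \ge 1$ and $k-1 \ge 1$, while $m \le n-1$ forces $n-m \ge 1$. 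After expanding $H_{\Sigma_k}^2 = (A+B)^2$ this collapses everything to the scalar inequality $\alpha A^2 + \beta AB + \gamma B^2 \ge \tfrac{m^2-mn+2n-2}{2(m-1)(n-m)}B^2$ for all $A,B \in \R$, where $\alpha = \tfrac1{2(m-k)}+\tfrac1{2(k-1)} = \tfrac{m-1}{2(m-k)(k-1)} > 0$, $\beta = \tfrac1{k-1}$, and $\gamma = \tfrac1{n-m} - \tfrac12 + \tfrac1{2(k-1)}$.

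Finally I would minimize the left-hand side over $A$ with $B$ fixed: the minimum equals $\big(\gamma - \tfrac{\beta^2}{4\alpha}\big)B^2$, and a short manipulation of fractions — whose key cancellation is $\tfrac1{2(k-1)} - \tfrac{m-k}{2(m-1)(k-1)} = \tfrac1{2(m-1)}$, which removes the $k$-dependence — gives $\gamma - \tfrac{\beta^2}{4\alpha} = \tfrac1{2(m-1)} + \tfrac1{n-m} - \tfrac12 = \tfrac{m^2-mn+2n-2}{2(m-1)(n-m)}$, exactly the claimed constant. The main, and really the only, obstacle is bookkeeping: one must split the index set $\{k+1,\dots,n\}$ into the two blocks $\{k+1,\dots,m\}$ and $\{m+1,\dots,n\}$ and apply the discarding of off-diagonal terms together with the trace identity in precisely the pattern that makes the whole estimate reduce to a single quadratic in $A,B$ whose $A$-minimum is the asserted multiple of $B^2$. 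There is no geometric or analytic difficulty beyond the algebra already present in \cite{brendle2022generalization}.
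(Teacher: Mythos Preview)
Your proposal is correct and follows exactly the approach the paper intends: the paper does not give its own proof of this lemma but simply cites \cite[Lemma 3.12]{brendle2022generalization} and notes that the calculations there carry over unchanged to the modified slicing, and your argument faithfully reproduces that computation. The algebra --- discarding off-diagonal terms, splitting into the $A$ and $B$ blocks, applying Cauchy--Schwarz on each block, and minimizing the resulting quadratic in $A$ --- is all correct and matches the Brendle--Hirsch--Johne derivation.
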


\begin{lem}[Extrinsic curvature terms on the bottom slice, {\cite[Lemma 3.13]{brendle2022generalization}}] \ \\
We have the estimate
\begin{equation*}
    \mathcal{V}_m
    \geq
 \frac{m^2-mn+2n-2}{2(n-m)(m-1)} \, 
 H_{\Sigma_m}^2.
\end{equation*}
\label{lem: ExtrinsicCurvature_BottomSlice}
\end{lem}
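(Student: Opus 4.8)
The plan is to argue exactly as in \cite[Lemma 3.13]{brendle2022generalization}: the estimate follows from discarding the off-diagonal components of $\sff_{\Sigma_m}$, applying Cauchy--Schwarz to the diagonal entries, and then checking that the resulting coefficient simplifies to the one in the statement. Recall that $\Sigma_m$ has dimension $n-m$ and that $\{e_{m+1},\dots,e_n\}$ is an orthonormal basis of $T_x\Sigma_m$, so $H_{\Sigma_m} = \sum_{q=m+1}^n \sff_{\Sigma_m}(e_q,e_q)$, and that by definition $\mathcal{V}_m = |\sff_{\Sigma_m}|^2 - \bigl(\tfrac{1}{2} - \tfrac{1}{2(m-1)}\bigr)H_{\Sigma_m}^2$.

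First I would write $|\sff_{\Sigma_m}|^2 = \sum_{p,q=m+1}^n \sff_{\Sigma_m}(e_p,e_q)^2 \ge \sum_{q=m+1}^n \sff_{\Sigma_m}(e_q,e_q)^2$, throwing away the nonnegative off-diagonal terms. Then by the Cauchy--Schwarz inequality applied to the $n-m$ diagonal entries,
\[
\sum_{q=m+1}^n \sff_{\Sigma_m}(e_q,e_q)^2 \;\ge\; \frac{1}{n-m}\Bigl(\sum_{q=m+1}^n \sff_{\Sigma_m}(e_q,e_q)\Bigr)^2 \;=\; \frac{1}{n-m}\,H_{\Sigma_m}^2 .
\]
Substituting this into the definition of $\mathcal{V}_m$ gives
\[
\mathcal{V}_m \;\ge\; \Bigl(\frac{1}{n-m} - \frac{1}{2} + \frac{1}{2(m-1)}\Bigr)H_{\Sigma_m}^2 ,
\]
and it remains only to identify the coefficient. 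Putting it over the common denominator $2(n-m)(m-1)$ and expanding $(n-m)(m-1) = mn - n - m^2 + m$, the numerator becomes $2(m-1) - (n-m)(m-1) + (n-m) = m^2 - mn + 2n - 2$, which is exactly the claimed coefficient.

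I do not anticipate a genuine obstacle here: the content is the same Cauchy--Schwarz estimate used on the intermediate slices in Lemma~\ref{lem: ExtrinsicCurvature_MidSlices}, specialized to the bottom slice, where the trace $\sum_{q=m+1}^n \sff_{\Sigma_k}(e_q,e_q)$ is literally the full mean curvature $H_{\Sigma_m}$. The only point worth flagging is that, in contrast to the range restriction needed for the top slice in Lemma~\ref{lem: ExtrinsicCurvature_TopSlice}, this estimate requires no sign hypothesis on $m^2 - mn + 2n - 2$; it holds verbatim, and a possibly negative right-hand side is harmless since this term enters only in combination with the nonnegative contributions $\mathcal{V}_1,\dots,\mathcal{V}_{m-1}$ and the eigenvalue term $\Lambda$ when assembling Lemma~\ref{lem: Reduced-MainInequality}.
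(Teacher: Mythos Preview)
Your proposal is correct and is exactly the argument from \cite[Lemma 3.13]{brendle2022generalization} that the paper cites: discard the off-diagonal entries of $\sff_{\Sigma_m}$, apply Cauchy--Schwarz to the $n-m$ diagonal terms, and simplify the resulting coefficient. The paper itself gives no proof and simply records the lemma as carrying over verbatim from \cite{brendle2022generalization}, so there is nothing to compare.
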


We record by direct computation the following lemma:
\begin{lem}[Algebraic lemma] \label{lem: algebraic}\ \\
Suppose $3 \le n \le 7$ and $2 \le m \le n-1$ are integers.
We define the quantity $\xi(n,m) \in \R$ by the formula
$$\xi(n,m) = \min\{m^2-mn+2n-2, m^2-mn+m+n\}.$$
Then for $3 \le n \le 5$, we have
$\xi(n,m) > 0$ for all $2 \le m \le n-1$. For $6 \le n \le 7$, we have
$\xi(n,m) > 0$ precisely when $n-2 \le m \le n-1$.
\end{lem}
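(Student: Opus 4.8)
\textbf{Proof proposal for Lemma~\ref{lem: algebraic}.}

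The plan is to treat this as a finite, elementary verification: for each integer $n$ with $3 \le n \le 7$ and each integer $m$ with $2 \le m \le n-1$, we compute the two quadratics $P(n,m) := m^2 - mn + 2n - 2$ and $Q(n,m) := m^2 - mn + m + n$ and check the sign of their minimum. Since there are only finitely many pairs $(n,m)$ in the stated range, in principle one could just tabulate; but it is cleaner to organize the computation by analyzing each quadratic as a function of $m$ for fixed $n$.

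First I would study $P(n,m) = m^2 - mn + 2n - 2 = m(m-n) + 2(n-1)$. For fixed $n$ this is a downward-then-upward parabola in $m$ with vertex at $m = n/2$, so on the integer range $2 \le m \le n-1$ its minimum is attained at $m$ closest to $n/2$. Evaluating: at $m = n-1$ one gets $P = (n-1)(n-1-n) + 2(n-1) = -(n-1) + 2(n-1) = n-1 > 0$, and at $m = n-2$ one gets $P = (n-2)(-2) + 2n - 2 = 2 > 0$. For the interior values one checks $P(n, n/2) = -n^2/4 + 2n - 2$, which is positive for $n \le 5$ (e.g. $n=4$ gives $2$, $n=5$ gives $13/4$) and becomes negative for $n \ge 6$ (n=6 gives $-1$, n=7 gives $-9/4$); more precisely, for $n = 6$, $P(6,3) = 9 - 18 + 10 = 1 > 0$ but $P(6, m) < 0$ is impossible at integer $m$ in $\{2,\dots,5\}$ — wait, one must recheck: $P(6,2) = 4 - 12 + 10 = 2$, $P(6,3) = 9 - 18 + 10 = 1$, $P(6,4) = 16 - 24 + 10 = 2$, $P(6,5) = 25 - 30 + 10 = 5$, all positive. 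For $n = 7$: $P(7,2) = 4 - 14 + 12 = 2$, $P(7,3) = 9 - 21 + 12 = 0$, $P(7,4) = 16 - 28 + 12 = 0$, $P(7,5) = 25 - 35 + 12 = 2$, $P(7,6) = 36 - 42 + 12 = 6$. So $P$ vanishes at $(7,3)$ and $(7,4)$ and is positive elsewhere in range.

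Next I would handle $Q(n,m) = m^2 - mn + m + n = m(m - n + 1) + n$, again a parabola in $m$ with vertex at $m = (n-1)/2$. At $m = n-1$: $Q = (n-1)(0) + n = n > 0$; at $m = n-2$: $Q = (n-2)(-1) + n = 2 > 0$. For smaller $m$ with $6 \le n \le 7$ one tabulates: $Q(6,2) = 4 - 12 + 2 + 6 = 0$, $Q(6,3) = 9 - 18 + 3 + 6 = 0$, $Q(6,4) = 16 - 24 + 4 + 6 = 2$, $Q(6,5) = 25 - 30 + 5 + 6 = 6$; $Q(7,2) = 4 - 14 + 2 + 7 = -1$, $Q(7,3) = 9 - 21 + 3 + 7 = -2$, $Q(7,4) = 16 - 28 + 4 + 7 = -1$, $Q(7,5) = 25 - 35 + 5 + 7 = 2$, $Q(7,6) = 36 - 42 + 6 + 7 = 7$; and for $3 \le n \le 5$ all values of $Q$ on the range are positive (e.g. $Q(5,2) = 4 - 10 + 2 + 5 = 1$, $Q(5,3) = 9 - 15 + 3 + 5 = 2$, $Q(5,4) = 5$, etc.). Combining the two sign analyses: $\xi(n,m) = \min\{P,Q\} > 0$ for all $2 \le m \le n-1$ when $3 \le n \le 5$, while for $n = 6$ both $P > 0$ everywhere but $Q(6,2) = Q(6,3) = 0$, so $\xi(6,m) > 0$ exactly for $m \in \{4,5\} = \{n-2, n-1\}$, and for $n = 7$, $P(7,3) = P(7,4) = 0$ and $Q$ is negative or zero at $m = 2,3,4$, so $\xi(7,m) > 0$ exactly for $m \in \{5,6\} = \{n-2,n-1\}$. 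This establishes the claim. The only mild subtlety — hardly an obstacle — is being careful that the parabola's continuous minimum at a half-integer need not be where the integer minimum over the range lies, so one should verify the boundary cases $m = n-2, n-1$ separately (as done above) rather than relying on the vertex alone; everything else is routine arithmetic.
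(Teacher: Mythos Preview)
Your proposal is correct and matches the paper's approach: the paper simply records the lemma ``by direct computation'' without giving details, and your explicit tabulation of $P(n,m)$ and $Q(n,m)$ over the finite range $3 \le n \le 7$, $2 \le m \le n-1$ is exactly that computation carried out. The arithmetic all checks, and the final sign conclusions agree with the statement; the only cosmetic issue is the stream-of-consciousness ``wait, one must recheck'' passage, which you should tidy before submission.
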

\begin{rem} \label{rem: dimension constraint}
The quantity $m^2-mn+2n-2$ is the same as the one in \cite[Lemma 3.14]{brendle2022generalization}. Compared to \cite{brendle2022generalization}, we need the extra constraint $m^2 - mn + m + n > 0$ coming from Lemma~\ref{lem: extrinsic on top slice}. This comes from our requirement for the top slice to come from the boundary component of some $\mu$-bubble. Unlike stable minimal hypersurfaces where $H=0$, in our case we have no \textit{a priori} bound on the mean curvature of the top slice, so we need extra constraint on the dimensions to control it. In \cite{chu2022rigidity}, Chu--Kwong--Lee used $\mu$-bubbles on the bottom slice in their proof of the rigidity result, and therefore needed the same constraint $m^2 - mn + m + n > 0$. This is why their rigidity result is stated for $n \le 5$. Xu \cite{xu2023dimension} proved the same estimate on the bottom slice without using $\mu$-bubbles, and thus extended the rigidity result to $n = 6$. In our case, since we need $\mu$-bubbles to reduce the non-compact setting to a compact setting, it is unclear whether we can get rid of the constraint $m^2 - mn + m + n > 0$.
\end{rem}

Using the above lemmas, we can show that manifolds with positive $m$-intermediate curvature do not allow
stable weighted slicings of order $m$ with constant $a$.

\begin{thm}[$m$-intermediate curvature and modified stable weighted slicings] \ \\
Assume that  $m^2 - mn + 2n -2 > 0$ and $m^2 - mn + m + n > 0$. Assume $0 < a \le \min\{\frac{m}{2(m-1)}, \frac{1}{n-m}, \frac{m^2 - mn + m + n}{2(m^2 - mn + 2n -2)}\}$. Suppose the closed Riemannian manifold $(N^n, g)$ admits a modified stable 
 weighted slicing
 \[
 \Sigma_m \subset \dots \subset \Sigma_1 \subset \Sigma_0 = N^n
 \]
 of order $2 \le m \leq n-1$ with constant $a$. Then we must have $(\cC_m)_N \le 0$ at some point on $\Sigma_m$.
 \label{theorem:SmoothSlicings}
\end{thm}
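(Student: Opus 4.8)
The plan is to argue by contradiction, combining the main inequality with the pointwise estimates on the extrinsic curvature terms. Suppose for contradiction that $(\cC_m)_N > 0$ at every point of $\Sigma_m$. First I would start from Lemma~\ref{Lemma: MainInequality}, which gives
\[
\int_{\Sigma_m} \rho_{m-1}^{-1}\left(\Lambda + \cR + \cE + \cG + \langle D_{\Sigma_0} h, \nu_1\rangle\right)\, d\mu \le 0.
\]
Then I would invoke Lemma~\ref{lem: Reduced-MainInequality} to bound $\cR + \cE + \cG \ge \cC_m(e_1,\dots,e_m) + \sum_{k=1}^m \cV_k$ pointwise on $\Sigma_m$, where $\{e_1,\dots,e_n\}$ is an orthonormal basis of $T_xN$ adapted to the slicing (i.e., $e_j = \nu_j$ for $1 \le j \le m$). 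Combining these yields
\[
\int_{\Sigma_m} \rho_{m-1}^{-1}\left(\Lambda + \cC_m(e_1,\dots,e_m) + \sum_{k=1}^m \cV_k + \langle D_{\Sigma_0} h, \nu_1\rangle\right)\, d\mu \le 0.
\]

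Next, I would handle the terms one at a time. The eigenvalue term $\Lambda = \sum_{k=1}^{m-1}\lambda_k$ is non-negative since each $\lambda_k \ge 0$. For the extrinsic curvature terms, Lemma~\ref{lem: ExtrinsicCurvature_MidSlices} gives $\cV_k \ge \frac{m^2-mn+2n-2}{2(m-1)(n-m)}\left(\sum_{q=m+1}^n \sff_{\Sigma_k}(e_q,e_q)\right)^2 \ge 0$ for $2 \le k \le m-1$ (using $m^2-mn+2n-2 > 0$), and Lemma~\ref{lem: ExtrinsicCurvature_BottomSlice} gives $\cV_m \ge \frac{m^2-mn+2n-2}{2(n-m)(m-1)}H_{\Sigma_m}^2 \ge 0$. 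For the top slice, Lemma~\ref{lem: ExtrinsicCurvature_TopSlice} gives $\cV_1 \ge a H_{\Sigma_1}^2 \ge 0$, where the hypotheses on $a$ are exactly those assumed in the statement. So $\sum_{k=1}^m \cV_k \ge a H_{\Sigma_1}^2$. The one term whose sign is not immediately controlled is $\langle D_{\Sigma_0} h, \nu_1\rangle$; here I would use the Cauchy--Schwarz bound $\langle D_{\Sigma_0} h, \nu_1\rangle \ge -|D_{\Sigma_0} h|$. Putting all of this together gives
\[
\int_{\Sigma_m} \rho_{m-1}^{-1}\left(\cC_m(e_1,\dots,e_m) + a H_{\Sigma_1}^2 - |D_{\Sigma_0} h|\right)\, d\mu \le 0.
\]

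Finally, I would invoke the defining property of a modified stable weighted slicing from Definition~\ref{defn: modified stable weighted slicing}: on $\Sigma_1$ (hence on $\Sigma_m \subset \Sigma_1$) we have the strict inequality $(\cC_m)_{\Sigma_0} + a H_{\Sigma_1}^2 - |D_{\Sigma_0} h| > 0$. Since $\cC_m(e_1,\dots,e_m) \ge (\cC_m)_N$ by definition of $(\cC_m)(p)$ as a minimum over orthonormal bases --- wait, the inequality I actually want is $\cC_m(e_1,\dots,e_m) \ge (\cC_m)_N$, which holds directly --- so if $(\cC_m)_N > 0$ on $\Sigma_m$ then the integrand $\rho_{m-1}^{-1}(\cC_m(e_1,\dots,e_m) + aH_{\Sigma_1}^2 - |D_{\Sigma_0}h|)$ is strictly positive on all of $\Sigma_m$ (using $\rho_{m-1} > 0$ and the strict slicing inequality), so the integral is strictly positive, contradicting the displayed inequality. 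Hence $(\cC_m)_N \le 0$ somewhere on $\Sigma_m$, as claimed. The main subtlety to be careful about is making sure the strict slicing inequality and the positivity assumption combine to give a strictly positive integrand \emph{pointwise} on the closed manifold $\Sigma_m$, so that the integral is genuinely positive; since $\Sigma_m$ is compact and everything is continuous this is routine, but it is the step where the hypotheses $m^2-mn+2n-2>0$, $m^2-mn+m+n>0$, and the constraint on $a$ all get used simultaneously (through Lemmas~\ref{lem: ExtrinsicCurvature_TopSlice}, \ref{lem: ExtrinsicCurvature_MidSlices}, and~\ref{lem: ExtrinsicCurvature_BottomSlice}).
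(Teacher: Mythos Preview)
Your proposal is correct and follows essentially the same route as the paper: argue by contradiction, combine the main inequality (Lemma~\ref{Lemma: MainInequality}) with the pointwise bound $\cR+\cE+\cG\ge \cC_m(e_1,\dots,e_m)+\sum_k\cV_k$ from Lemma~\ref{lem: Reduced-MainInequality}, use Lemmas~\ref{lem: ExtrinsicCurvature_TopSlice}--\ref{lem: ExtrinsicCurvature_BottomSlice} to get $\sum_k\cV_k\ge aH_{\Sigma_1}^2$, and then invoke the strict slicing inequality from Definition~\ref{defn: modified stable weighted slicing} to force a strictly positive integrand, contradicting $\int\le 0$. Your write-up is slightly more explicit than the paper's (you spell out $\Lambda\ge 0$ and the Cauchy--Schwarz step $\langle D_{\Sigma_0}h,\nu_1\rangle\ge -|D_{\Sigma_0}h|$), but the argument is the same.
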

\begin{proof}
Suppose that the Riemannian manifold $(N^n, g)$ admits a stable weighted slicing
 \[
 \Sigma_m \subset \dots \subset \Sigma_1 \subset \Sigma_0 = N^n
 \]
of order $2 \le m \leq n-1$ with constant $a$, and $(\cC_m)_N > 0$ on $\Sigma_m$.

Combining the estimates for the extrinsic curvature terms, Lemmas \ref{lem: ExtrinsicCurvature_TopSlice}, \ref{lem: ExtrinsicCurvature_MidSlices} and \ref{lem: ExtrinsicCurvature_BottomSlice}, with Lemma \ref{lem: Reduced-MainInequality} implies
\begin{align*}
    \mathcal{R} + \mathcal{E} + \mathcal{G} \geq 
    \mathcal{C}_m(e_1, \dots, e_m) + aH_{\Sigma_1}^2,
\end{align*}
which holds on all points on $\Sigma_m$.
By definition of $\Sigma_1$, the following inequality holds on $\Sigma_1$:
$$(\cC_m)_{\Sigma_0} + aH_{\Sigma_1}^2 - |\langle D_{\Sigma_0} h, \nu_1 \rangle| > 0.$$  
Combining these two inequalities yields that on $\Sigma_m$, we have 
\begin{align*}
    \mathcal{R} + \mathcal{E} + \mathcal{G} > \langle D_{\Sigma_0} h, \nu_1 \rangle.
\end{align*}
This contradicts the main inequality, Lemma \ref{Lemma: MainInequality}. Therefore we must have $(\cC_m)_N \le 0$ at some point on $\Sigma_m$.
\end{proof}
When $m=1$, $m$-intermediate curvature reduces to Ricci curvature, and we also have a non-existence result.

\begin{thm}[Ricci curvature and modified stable weighted slicings] \ \\
Suppose the closed Riemannian manifold $(N^n, g)$ admits a modified stable 
 weighted slicing
 \[
  \Sigma_1 \subset \Sigma_0 = N^n
 \]
 of order $m=1$ with constant $\frac{1}{n-1}$. Then we must have $(\cC_1)_N \le 0$ at some point on $\Sigma_1$.
 \label{theorem:SmoothSlicings, m=1}
\end{thm}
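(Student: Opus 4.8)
The plan is to reduce the $m=1$ case to a direct application of the second variation inequality on $\Sigma_1$, exactly paralleling the structure of the proof of Theorem~\ref{theorem:SmoothSlicings} but with only a single slice. Suppose for contradiction that $(\cC_1)_N > 0$ everywhere on $\Sigma_1$. First I would unwind what a modified stable weighted slicing of order $m=1$ actually gives us: by Definition~\ref{defn: modified stable weighted slicing}, $\Sigma_1$ is an embedded two-sided hypersurface in $N$ with $H_{\Sigma_1} = h$, the operator $\cL_1 = -\Delta_{\Sigma_1} - |\sff_{\Sigma_1}|^2 - \Ric_{\Sigma_0}(\nu_1,\nu_1) - \langle D_{\Sigma_0} h, \nu_1\rangle$ is non-negative, and on $\Sigma_1$ we have $(\cC_1)_{\Sigma_0} + a H_{\Sigma_1}^2 - |D_{\Sigma_0} h| > 0$ with $a = \frac{1}{n-1}$.

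Next I would recall that when $m=1$, the quantity $\cC_1(e_1,\dots,e_m) = \cC_1(e_1)$ is precisely $\sum_{q=2}^n \Rm_N(e_1,e_q,e_1,e_q) = \Ric_N(e_1,e_1)$, so $(\cC_1)_N(p) = \min_{|v|=1}\Ric_N(v,v)$, and in particular $\Ric_{\Sigma_0}(\nu_1,\nu_1) \ge (\cC_1)_N$ at every point of $\Sigma_1$. The key step is then the extrinsic curvature estimate on the top slice: for $m=1$ the term $\cV_1 = |\sff_{\Sigma_1}|^2$ (the double sum $\sum_{p=2}^{m}\sum_{q=p+1}^n$ is empty since it starts at $p=2 > m = 1$), and by Cauchy--Schwarz $|\sff_{\Sigma_1}|^2 \ge \frac{1}{n-1} H_{\Sigma_1}^2 = a H_{\Sigma_1}^2$, which is the analogue of Lemma~\ref{lem: ExtrinsicCurvature_TopSlice} in this degenerate case (and explains the specific constant $\frac{1}{n-1}$). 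Testing the stability inequality $\cL_1 \ge 0$ against a first eigenfunction, or more simply against the constant function (or $v_1 = \rho_1$), gives
\[
\int_{\Sigma_1} \Big( |\sff_{\Sigma_1}|^2 + \Ric_{\Sigma_0}(\nu_1,\nu_1) + \langle D_{\Sigma_0} h, \nu_1\rangle \Big)\, d\mu \le 0
\]
after integrating the Laplacian term away. Combining the two pointwise lower bounds, the integrand is bounded below by $(\cC_1)_N + a H_{\Sigma_1}^2 + \langle D_{\Sigma_0} h,\nu_1\rangle \ge (\cC_1)_N + aH_{\Sigma_1}^2 - |D_{\Sigma_0} h| > 0$ on $\Sigma_1$, contradicting the displayed inequality. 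Hence $(\cC_1)_N \le 0$ somewhere on $\Sigma_1$.

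The one technical point I would be careful about is whether to test the stability operator against a constant or against the first eigenfunction $v_1 = \rho_1$; using $v_1$ and the equation $\cL_1 v_1 = \lambda_1 v_1$ with $\lambda_1 \ge 0$ produces the cleanest analogue of the $\rho_{m-1}^{-1}$-weighted integral identities used for $m \ge 2$, and lets one divide by $v_1$ to get a pointwise-flavored identity before integrating; this is genuinely routine here since there are no intermediate slices and hence no $\mathcal{G}$ or $\Lambda$ summation to manage. The main (minor) obstacle is purely bookkeeping: verifying that all the $m \ge 2$ machinery (Lemmas~\ref{lem: FirstSlicingEquality}--\ref{Lemma: MainInequality}) degenerates correctly at $m=1$ so that the ``main inequality'' becomes exactly the stability inequality $\cL_1 \ge 0$ tested against $v_1$, with $\mathcal{E} = |\sff_{\Sigma_1}|^2$ and $\mathcal{R} = \Ric_{\Sigma_0}(\nu_1,\nu_1)$ and no other terms. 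I expect no real difficulty beyond this and the elementary Cauchy--Schwarz estimate $|\sff_{\Sigma_1}|^2 \ge \frac{1}{n-1}H_{\Sigma_1}^2$.
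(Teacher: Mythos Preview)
Your proposal is correct and follows essentially the same approach as the paper: test the non-negativity of $\cL_1$ against the constant function $\psi \equiv 1$, use the Cauchy--Schwarz estimate $|\sff_{\Sigma_1}|^2 \ge \tfrac{1}{n-1}H_{\Sigma_1}^2$ together with $\Ric_{\Sigma_0}(\nu_1,\nu_1)\ge (\cC_1)_N$, and conclude that the integrand is strictly positive, contradicting the integral inequality. Your extra discussion of testing against $v_1$ and of how the $m\ge 2$ machinery degenerates is unnecessary here---the paper simply tests against the constant---but it does no harm.
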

\begin{proof}
Suppose that $(N^n, g)$ admits a stable weighted slicing
 \[
 \Sigma_1 \subset \Sigma_0 = N^n
 \]
of order $m=1$ with constant $a=\frac{1}{n}$, and a metric of positive Ricci curvature. 
By definition, that means we have a smooth function $h \in C^\infty(N)$ such that 
\begin{itemize}
\item the mean curvature of $\Sigma_1$ satisfies $H_{\Sigma_1} = h,$
\item the operator $\cL_1 = -\Delta_{\Sigma_1} -|\sff_{\Sigma_1}|^2 - \Ric_{\Sigma_0}(\nu_1,\nu_1) - \langle D_{\Sigma_0} h, \nu_1 \rangle$ is a non-negative operator, where $\nu_1$ is a unit normal vector field along $\Sigma_1$,
\item we have $(\cC_1)_{\Sigma_0} + \frac{1}{n-1}H_{\Sigma_1}^2 - |\langle D_{\Sigma_0} h, \nu_1 \rangle| > 0,$
on $\Sigma_1$.
\end{itemize}
The second condition means that 
$$\int_{\Sigma_1} |\nabla \phi|^2 \, d\Sigma_1 \ge \int_{\Sigma_1} (|\sff_{\Sigma_1}|^2 + \Ric_{\Sigma_0}(\nu_1,\nu_1) + \langle D_{\Sigma_0} h, \nu_1\rangle)\phi^2 \, d\Sigma_1$$
for any $\phi \in C^\infty(\Sigma_1)$. Here we set $\phi = 1$, and we get 
$$ \int_{\Sigma_1} |\sff_{\Sigma_1}|^2 + \Ric_{\Sigma_0}(\nu_1,\nu_1) + \langle D_{\Sigma_0} h, \nu_1\rangle \, d\Sigma_1 \le 0$$

On the other hand, by discarding the off-diagonal terms and using the Cauchy-Schwarz inequality, we have that on $\Sigma_1$,
\begin{align*}
|\sff_{\Sigma_1}|^2 + \Ric_{\Sigma_0}(\nu_1,\nu_1) + \langle D_{\Sigma_0} h, \nu_1 \rangle & \ge \frac{1}{n-1}H_{\Sigma_1}^2 + \Ric_{\Sigma_0}(\nu_1,\nu_1) + \langle D_{\Sigma_0} h, \nu_1 \rangle \\
& \ge \frac{1}{n-1}H_{\Sigma_1}^2 + \cC_1 - |\langle D_{\Sigma_0} h, \nu_1 \rangle| \\
&> \, 0,
\end{align*}
which contradicts the integral inequality above. 

\end{proof}

\subsection{Existence of modified stable weighted slicings}
In this section we prove the existence of stable weighted slicings of order $m$, thus finishing the proof of Theorem~\ref{thm: intermediate curvature}.

\begin{proof}[Proof of Theorem~\ref{thm: intermediate curvature}]
Assume either $3 \le n \le 5$, $1 \le m \le n-1$ or $6 \le n \le 7$, $m \in \{1, n-2, n-1\}$. Suppose $F: N^n \rightarrow \T^m \times M^{n-m}$ has degree $d \neq 0$. By taking a connected component we can assume $N$ is connected.

 The projection of $F$ onto the factors
 yields maps $f_0: N \rightarrow M$
 and maps $f_1, \dots, f_m: N \rightarrow S^1$.
 Let $\Theta$ be a top-dimensional form of
 the manifold $M$
 normalized such that $\int_M \Theta = 1$,
 and let $\theta$ be a one-form
 on the circle $S^1$ with $\int_{S^1}\theta = 1$.
We define the pull-back forms $\Omega := f_0^* \, \Theta$
and $\omega_j := f_j^* \, \theta$.
By the normalization condition we deduce that
$\int_N \omega_1 \wedge \dots \wedge \omega_m \wedge \Omega = d$. 

 By Lemma~\ref{lem: representable}, we can take $\Sigma \subset M$ to be a closed embedded orientable hypersurface such that $[\Sigma] \in H_{n-1}(M;\Z)$ is dual to $\omega_1$. Then there exists a connected component $\Sigma'$ of $\Sigma$ such that if we denote the Poincar\'e dual of $[\Sigma']$ by $\omega'_1 \in H^1(M;\Z)$, then we have $\int_N \omega'_1 \wedge \dots \wedge \omega_m \wedge \Omega = d'$ for some nonzero $d'$. Then by replacing $\beta_1$ by $\beta'_1$,  $\Sigma$ by $\Sigma'$, $d$ by $d'$, and $f_1$ by a smooth map representing $\beta'_1$, we can take $\Sigma$ to be a connected hypersurface dual to $\beta_1$.

Suppose the manifold $N \# X$ has positive $m$-intermediate curvature. Then we apply Theorem~\ref{thm: main construction} with an arbitrary $a>0$ to be determined later. We obtain a closed orientable Riemannian manifold $(\tilde Y, \tilde g)$, a smooth function $h \in C^\infty(Y)$, and a closed embedded orientable hypersurface $\Lambda_1^{n-1} \subset \tilde{Y}$ such that
\begin{enumerate}[(i)]
\item $\tilde Y = N' \#_i \tilde{X}_i$, where $N'$ is a finite cyclic covering of $N$ obtained by cutting and pasting along $\Sigma$ and the $\tilde{X}_i$'s are a finite number of closed manifolds.
\item In a neighborhood of $\Lambda_1$, $\tilde Y$ has positive $m$-intermediate curvature.
\item $p_*[\Lambda_1] = [\Sigma] \in H_{n-1}(N')$, where $p: \tilde Y \to N'$ is the projection map and $[\Sigma]$ is the homology class represented by any copy of $\Sigma$ in $N'$.
\item On $\Lambda_1$, we have 
$$H_{\Lambda_1} = h,$$
$$R_{\tilde{Y}} + ah^2 - 2|D_{\tilde{Y}} h| > 0,$$
and 
$$\cQ(\psi) 
= \int_\Lambda \left(|D_{\Lambda_1} \psi|^2  - \big(|\sff_{\Lambda_1}|^2 + \Ric_{\tilde{Y}}(\nu,\nu) + \langle D_{\tilde{Y}} h, \nu \rangle\big)\psi^2 
\right)d\cH^{n-1} \ge 0
$$
for all $\psi \in C^\infty(\Lambda_1)$. 
\end{enumerate}

Let $\Lambda_0 = \tilde{Y}$.
Then $\Lambda_1 \subset \Lambda_0 = \tilde{Y}$ gives a modified stable weighted slicing of order $1$ with constant $a$. If $m = 1$, we set $a = \frac{1}{n}$. Then by Theorem~\ref{theorem:SmoothSlicings, m=1}, we must have $(\cC_1)_{\tilde{Y}} \le 0$ on some point of $\Sigma_1$, which contradicts condition (ii). This shows $N \# X$ cannot have positive $1$-intermediate curvature. 

Now assume $m \ge 2$. By condition (i), $\tilde{Y}$ admits a map $G:\tilde{Y} \to N$  with some nonzero degree. By condition (iii) we find that $G_*[\Lambda_1] = [\Sigma] \in H_{n-1}(N)$, so using naturality of the cup and cap products, we obtain
\begin{align*}
& G_*\big([\Lambda_1] \frown ( G^*\omega_2 \smile \dots \smile G^*\omega_m \smile G^*\Omega) \big) \\
& = G_*\big([\Lambda_1] \frown G^*( \omega_2 \smile \dots \smile \omega_m \smile \Omega)\big) \\
& = G_*\big([\Lambda_1] \frown G^*( \omega_2 \smile \dots \smile \omega_m \smile \Omega)\big) \\
& = G_*[\Lambda_1] \frown ( \omega_2 \smile \dots \smile \omega_m \smile \Omega) \\
& = [\Sigma] \frown ( \omega_2 \smile \dots \smile \omega_m \smile \Omega) \\
& = ([N] \frown \omega_1)\frown ( \omega_2 \smile \dots \smile \omega_m \smile \Omega) \\
& = [N]\frown ( \omega_1 \smile \dots \smile \omega_m \smile \Omega) \\
& = d.
\end{align*}
This shows 
\begin{align*}
\int_{\Lambda_1} G^*{\omega}_2 \wedge \dots \wedge G^*{\omega}_m \wedge G^*{\Omega} 
\neq 0.
\end{align*}
Then for $2 \le k \le m$, one can inductively construct the slices $\Lambda_k$ and the weights $\rho_k$, such that $\int_{\Lambda_k} G^*{\omega}_k \wedge \dots \wedge G^*{\omega}_m \wedge G^*{\Omega} 
\neq 0$
holds. For this, we can use the same argument as in \cite[Proof of Theorem 4.5]{SY:sing-PMT} or \cite[Proof of Theorem 1.5]{brendle2022generalization}, where all the details are given. 

We thus obtain a  modified stable weighted slicing $\Lambda_m \subset \dots \subset \Lambda_1 \subset \Lambda_0 = \tilde{Y}$ of order $m$ with constant $a$.
By our assumption on $n$ and $m$, we have $m^2 - mn + 2n -2 > 0$ and $m^2 - mn + m + n > 0$ by Lemma~\ref{lem: algebraic}. Choose $a$ so that $0 < a \le \min\{\frac{m}{2(m-1)}, \frac{1}{n-m}, \frac{m^2 - mn + m + n}{2(m^2 - mn + 2n -2)}\}$. Then by Theorem~\ref{theorem:SmoothSlicings}, we must have $(\cC_m)_{\tilde Y} \le 0$ at some point on $\Lambda_m \subset \Lambda_1$. This contradicts condition (ii), which shows $N \# X$ cannot have positive $m$-intermediate curvature and thereby completes the proof.
\end{proof}

\bibliographystyle{alpha}
\bibliography{main}

\begin{thebibliography}{LUY20}

\bibitem[BHJ22]{brendle2022generalization}
Simon Brendle, Sven Hirsch, and Florian Johne.
\newblock A generalization of {G}eroch's conjecture.
\newblock \url{https://arxiv.org/abs/2207.08617}, 2022.

\bibitem[CKL22]{chu2022rigidity}
Jianchun Chu, Kwok-Kun Kwong, and Man-Chun Lee.
\newblock Rigidity on non-negative intermediate curvature.
\newblock \url{https://arxiv.org/abs/2208.12240}, 2022.

\bibitem[CL20]{soapbubble}
Otis Chodosh and Chao Li.
\newblock Generalized soap bubbles and the topology of manifolds with positive
  scalar curvature.
\newblock \url{https://arxiv.org/abs/2008.11888}, 2020.

\bibitem[GL83]{gromov1983positive}
Mikhael Gromov and H~Blaine Lawson.
\newblock Positive scalar curvature and the dirac operator on complete
  riemannian manifolds.
\newblock {\em Publications Math{\'e}matiques de l'IH{\'E}S}, 58:83--196, 1983.

\bibitem[Gro96]{gromov1996positive}
Mikhael Gromov.
\newblock Positive curvature, macroscopic dimension, spectral gaps and higher
  signatures.
\newblock In {\em Functional Analysis on the Eve of the 21st Century Volume
  II}, pages 1--213. Springer, 1996.

\bibitem[Gro18]{gromov2018metric}
Misha Gromov.
\newblock Metric inequalities with scalar curvature.
\newblock {\em Geometric and Functional Analysis}, 28(3):645--726, 2018.

\bibitem[Gro19]{gromov2019fourlectures}
Misha Gromov.
\newblock Four lectures on scalar curvature.
\newblock \url{https://arxiv.org/abs/1908.10612}, 2019.

\bibitem[HP99]{huisken1999geometric}
Gerhard Huisken and Alexander Polden.
\newblock Geometric evolution equations for hypersurfaces.
\newblock {\em Calculus of variations and geometric evolution problems}, pages
  45--84, 1999.

\bibitem[LUY20]{lesourd2020positive}
Martin Lesourd, Ryan Unger, and Shing-Tung Yau.
\newblock Positive scalar curvature on noncompact manifolds and the {L}iouville
  theorem.
\newblock \url{https://arxiv.org/abs/2009.12618}, 2020.

\bibitem[Sch84]{schoen1984conformal}
Richard Schoen.
\newblock Conformal deformation of a riemannian metric to constant scalar
  curvature.
\newblock {\em Journal of Differential Geometry}, 20(2):479--495, 1984.

\bibitem[Sch89]{schoen1989variational}
Richard Schoen.
\newblock Variational theory for the total scalar curvature functional for
  {R}iemannian metrics and related topics.
\newblock In {\em Topics in calculus of variations}, pages 120--154. Springer,
  1989.

\bibitem[Sch98]{schick1998counterexample}
Thomas Schick.
\newblock A counterexample to the (unstable) {G}romov--{L}awson--{R}osenberg
  conjecture.
\newblock {\em Topology}, 37(6):1165--1168, 1998.

\bibitem[SY79a]{schoen1979proof}
Richard Schoen and Shing-Tung Yau.
\newblock On the proof of the positive mass conjecture in general relativity.
\newblock {\em Communications in Mathematical Physics}, 65(1):45--76, 1979.

\bibitem[SY79b]{SY:descent}
Richard Schoen and Shing-Tung Yau.
\newblock On the structure of manifolds with positive scalar curvature.
\newblock {\em Manuscripta mathematica}, 28(1):159--183, 1979.

\bibitem[SY17]{SY:sing-PMT}
Richard Schoen and Shing-Tung Yau.
\newblock Positive scalar curvature and minimal hypersurface singularities.
\newblock \url{https://arxiv.org/abs/1704.05490}, 2017.

\bibitem[WZ22]{wang2022generalized}
Xiangsheng Wang and Weiping Zhang.
\newblock On the generalized {G}eroch conjecture for complete spin manifolds.
\newblock {\em Chinese Annals of Mathematics, Series B}, 43(6):1143--1146,
  2022.

\bibitem[Xu23]{xu2023dimension}
Kai Xu.
\newblock Dimension constraints in some problems involving intermediate
  curvature.
\newblock \url{https://arxiv.org/abs/2301.02730}, 2023.

\bibitem[Zhu21]{zhu2021width}
Jintian Zhu.
\newblock Width estimate and doubly warped product.
\newblock {\em Transactions of the American Mathematical Society},
  374(2):1497--1511, 2021.

\end{thebibliography}

\end{document}